\newtheorem{theorem}{Theorem}[section]
\newtheorem{lemma}[theorem]{Lemma}
\newtheorem{corollary}[theorem]{Corollary}
\newtheorem{definition}[theorem]{Definition}
\newtheorem{proposition}[theorem]{Proposition}
\newtheorem{remark}[theorem]{Remark}
\newtheorem*{remark 1}{Remark}
\numberwithin{equation}{section}
\newcommand{\norm}[1]{\left\|#1\right\|}
\newcommand{\abs}[1]{\left|#1\right|}
\DeclarePairedDelimiter{\ceil}{\lceil}{\rceil}
\newcommand{\T}{\ensuremath{\mathbb{T}}}
\newcommand*{\R}{\ensuremath{\mathbb{R}}}
\renewcommand*{\S}{\ensuremath{\mathcal{S}}}
\newcommand*{\N}{\ensuremath{\mathbb{N}}}
\newcommand*{\Z}{\ensuremath{\mathbb{Z}}}
\newcommand*{\C}{\ensuremath{\mathbb{C}}}
\newcommand{\eps}{\varepsilon}
\newcommand*{\tr}{\ensuremath{\mathrm{tr\,}}}
\newcommand*{\Id}{\ensuremath{\mathrm{Id}}}
\newcommand*{\RR}{\ensuremath{\mathcal{R}}}
\newcommand*{\RRc}{\ensuremath{\mathcal{B}}}
\def\div{\mathop{\rm div}\nolimits}    
\def\supp{\mathop{\rm supp}\nolimits}    
\def\curl{\mathop{\rm curl}\nolimits}    
\title{Onsager's conjecture for admissible weak solutions}
\author{Tristan Buckmaster}
\address{Courant Insitute of Mathematical Sciences, New York University, New York, NY 10012, USA}
\email{buckmaster@cims.nyu.edu}
\author{Camillo De Lellis}
\address{Institut f\"ur Mathematik, Universit\"at Z\"urich, CH-8057 Z\"urich, Switzerland}
\email{camillo.delellis@math.unizh.ch}
\author{L\'aszl\'o Sz\'ekelyhidi Jr.}
\address{Institut f\"ur Mathematik, Universit\"at Leipzig, D-04103 Leipzig, Germany}
\email{laszlo.szekelyhidi@math.uni-leipzig.de}
\author{Vlad Vicol}
\address{Department of Mathematics, Princeton University, Princeton, NJ 08544, USA}
\email{vvicol@math.princeton.edu}
\date{\today}
\begin{document}

\begin{abstract}
We prove that given any $\beta<1/3$, a time interval $[0,T]$, and given any  smooth energy profile $e \colon [0,T] \to (0,\infty)$, there exists a weak solution $v$ of the three-dimensional Euler equations such that $v \in C^{\beta}([0,T]\times \T^3)$, with $e(t) = \int_{\T^3} |v(x,t)|^2 dx$ for all $t\in [0,T]$. Moreover, we show that a suitable $h$-principle holds in the regularity class $C^\beta_{t,x}$, for any $\beta<1/3$. The implication of this is that the dissipative solutions we construct are in a sense typical in the appropriate space of subsolutions as opposed to just isolated examples.
\end{abstract}

\maketitle


\section{Introduction}

In this paper we consider the incompressible Euler equations 
\begin{equation}\label{e:eulereq}
\left\{\begin{array}{l}
\partial_t v+ v\cdot\nabla v +\nabla p =0\\ \\
\div v = 0,
\end{array}\right.
\end{equation}
in the periodic setting $x\in\T^3=\R^3\setminus \Z^3$,
where $v$ is a vector field representing the velocity of the fluid and $p$ is the pressure. We study weak (distributional) solutions $v$ which are H\"older continuous in space, i.e.~such that\footnote{The smallest constant $C$ satisfying \eqref{e:holder-space} will be denoted by $[v]_\beta$, cf. Appendix \ref{s:hoelder}. We will write $v\in C^\beta (\T^3\times [0,T])$ when $v$ is H\"older continuous in the whole space-time.}
\begin{equation}\label{e:holder-space}
|v(x,t)-v(y,t)|\leq C|x-y|^{\beta}\quad\textrm{ for all }t\in[0,T]
\end{equation}
for some constant $C$ which is independent of time $t$. 

In his famous 1949 note on statistical hydrodynamics Lars Onsager~\cite{On1949} conjectured that the threshold regularity for the validity of the energy conservation of weak solutions of \eqref{e:eulereq} is the exponent $\sfrac{1}{3}$: in particular he asserted that for larger H\"older exponents any weak solution would conserve the energy, whereas for any smaller exponent there are solutions which do not. The first assertion was fully proved by Constantin, E and Titi in \cite{CoETi1994}, after a partial result of Eyink in \cite{Eyink} (see also~\cite{CCFS2008} for a sharper criterion in $L^3$-based spaces). Concerning the second assertion, the first proof of the existence of a square summable weak solution which does not preserve the energy is due to Scheffer in his pioneering paper \cite{Scheffer}. A different proof has been later given by Shnirelman in \cite{Shnirelman}. In \cite{DlSzAnnals} the second and third author realized that techniques from the theory of differential inclusions could be applied very efficiently to produce bounded weak solutions which violate the energy conservation in several forms. Pushed by the analogy of these constructions with the famous $C^1$ solutions of Nash and Kuiper for the isometric embedding problem (cf. \cite{Nash} and \cite{Kuiper}) they proposed to approach the remaining statement of the Onsager's conjecture in a similar way (cf. \cite{DlSz2012}). Indeed in \cite{DlSz2013} and \cite{DlSzJEMS} they were able to give the first examples of, respectively, continuous and H\"older continuous solutions which dissipate the energy, reaching the threshold exponent $\sfrac{1}{10}$. After a series of important partial results improving the threshold and the techniques from several points of view, cf. \cite{Is2013,BuDLeSz2013,BuDLeIsSz2015,Bu2015,BuDLeSz2016}, in his recent paper Isett \cite{Is2016} has been able to finally reach the Onsager exponent $\sfrac{1}{3}$. The proof of Isett combines previous ideas with two new important ingredients, one developed by Daneri and the third author in \cite{DaSz2016} (the introduction of Mikado flows, see Section \ref{s:perturbation_outline}) and one introduced by Isett himself the aforementioned paper (the gluing technique, see Section \ref{s:gluing_outline}). 

However, the solutions produced in \cite{Is2016} are only shown to be nonconservative and in fact for those solutions the total kinetic energy fails to be monotonic on any interval of time. Thus Isett's theorem left open the question whether it is possible or not to construct solutions which {\em dissipate} the kinetic energy (i.e. with strictly monotonic decreasing energy). In fact the latter is a relevant point, for at least two reasons: first of all because dissipative solutions satisfy the weak-strong uniqueness property \cite{LionsBook,BrenierDLSz2011} and secondly because indeed Onsager in his work conjectures the existence of dissipative solutions. Indeed, in \cite{On1949} Onsager states: {\it 
\begin{center}
It is of some interest to note that in principle, turbulent dissipation as described could take place just as readily without the final assistance by viscosity.
\end{center}
}

In this note we suitably modify the approach of Isett in order to show the following theorem.

\begin{theorem}\label{t:energy_profile}
Assume $e: [0, T]\to \R$ is a \emph{strictly positive} smooth function. Then for any $0<\beta<\sfrac 13$ there exists a weak solution $v\in C^{\beta}(\T^3\times [0,T])$ to \eqref{e:eulereq} such that 
\begin{equation*}
\int_{\T^3} \abs{v(x,t)}^2~dx=e(t)\,.
\end{equation*}
\end{theorem}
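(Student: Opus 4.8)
The plan is to run a convex-integration / Nash–Kuiper scheme producing a sequence of smooth solutions $(v_q, p_q, \mathring{R}_q)$ of the Euler–Reynolds system $\partial_t v_q + \div(v_q \otimes v_q) + \nabla p_q = \div \mathring{R}_q$, $\div v_q = 0$, indexed by a rapidly growing frequency parameter $\lambda_q$ and decaying amplitude $\delta_q \sim \lambda_q^{-2\beta}$, with $\|\mathring{R}_q\|_{C^0} \lesssim \delta_{q+1}\lambda_q^{-\alpha}$ for a small $\alpha$, and $\|v_q\|_{C^1} \lesssim \delta_q^{1/2}\lambda_q$. The limit $v = \lim_q v_q$ will then be a weak solution of Euler lying in $C^\beta_{t,x}$ for every $\beta < 1/3$, provided one takes $b > 1$ close to $1$ in $\lambda_q = \lambda_0^{b^q}$ and $\alpha$ small. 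The novelty forcing the kinetic energy to equal $e(t)$ exactly is the extra \emph{energy-pumping} bookkeeping carried along the iteration: one inductively maintains the estimate
\begin{equation*}
\left| e(t) - \int_{\T^3} |v_q(x,t)|^2\,dx - \delta_{q+1} \right| \leq \tfrac14 \delta_{q+1}
\end{equation*}
(or a similar choice of constants), so that at each stage there is a definite amount of energy, comparable to $\delta_{q+1}$, that the next perturbation $w_{q+1} = v_{q+1} - v_q$ must supply. Since $\|w_{q+1}\|_{L^2}^2 \sim \delta_{q+1}$ is precisely the natural size of the perturbation dictated by $\mathring{R}_q$, this is consistent, and the telescoping sum gives $\int |v|^2 = e$ in the limit.

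The iteration has two main stages, following Isett's architecture as outlined in the paper. First, the \textbf{gluing step}: using short-time existence and stability estimates for smooth solutions of Euler, one glues together exact solutions on a partition of $[0,T]$ into intervals of length $\sim \lambda_q^{-1+\text{(small)}}$, replacing $v_q$ by $\tilde v_q$ whose Reynolds stress $\mathring{R}_{q}$ is now supported in time on a disjoint family of small intervals $I_i$ and satisfies improved spatial-derivative bounds; crucially the gluing changes the energy only by $o(\delta_{q+1})$, so the inductive energy estimate is preserved. Second, the \textbf{perturbation step}: on each interval $I_i$ one adds a Mikado flow correction
\begin{equation*}
w_{q+1} = \sum_{i} \sum_{k} a_{k,i}(x,t)\, \phi_{k}\big(\lambda_{q+1} \Phi_i(x,t)\big)\, \xi_k,
\end{equation*}
where the amplitudes $a_{k,i}$ are built from the classical geometric (Nash-type) decomposition of the matrix $\rho\, \Id - \mathring{R}_q$ into a positive combination of the rank-one tensors $\xi_k \otimes \xi_k$, the Mikado profiles $\phi_k$ are $\lambda$-periodic, pressureless, divergence-free flows disjointly supported in the periodic cell, and $\Phi_i$ is the phase corrected by the flow map of $\tilde v_q$. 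The scalar $\rho = \rho_q(t)$ is chosen exactly so that $\int_{\T^3}|w_{q+1}|^2 \approx 3(2\pi)^3\rho$ matches the energy gap $e(t) - \int|\tilde v_q|^2 - \delta_{q+2}$ dictated above; the smoothness and strict positivity of $e$, together with the inductive estimate, guarantee $\rho_q(t)$ is smooth, positive, and of size $\sim \delta_{q+1}$, so that $\rho\,\Id - \mathring{R}_q$ is genuinely positive definite and the geometric decomposition applies. One then checks that $\mathring{R}_{q+1}$, assembled from the transport error, the Nash error, the oscillation error and the corrector error, obeys the required bound $\|\mathring{R}_{q+1}\|_{C^0} \lesssim \delta_{q+2}\lambda_{q+1}^{-\alpha}$ — this is exactly where the choice $\beta < 1/3$ enters, via the inequality governing $b, \alpha$ and the exponents in $\delta_q, \lambda_q$.

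The step I expect to be the main obstacle is \emph{simultaneously} closing the Reynolds-stress estimate at the Onsager-critical exponent and the exact energy identity. The difficulty is that prescribing $\int|v_q|^2$ exactly requires controlling the energy of the perturbation $w_{q+1}$ to higher accuracy than its $C^0$ or $L^2$ size: one must show $\big|\int |w_{q+1}|^2 - 3(2\pi)^3\rho_q(t)\big| = o(\delta_{q+2})$, which forces careful stationary-phase / commutator estimates on the oscillatory integrals $\int a_{k,i}a_{k',i}\,\phi_k\phi_{k'}(\lambda_{q+1}\Phi_i)$, using the disjoint-support property of the Mikado profiles to kill the diagonal-in-$k$ low-frequency part and the non-stationarity of the phase gradient $\nabla\Phi_i$ (close to the identity after gluing) to gain negative powers of $\lambda_{q+1}$ off-diagonal. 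Managing the time cutoffs between the disjoint intervals $I_i$ so that they do not destroy either the energy accuracy or the $C^1_{x}$ bounds on $w_{q+1}$ is the delicate bookkeeping that makes the argument work; everything else is, in the terminology of the field, routine once the parameter inequalities are set up correctly.
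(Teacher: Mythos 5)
Your overall architecture coincides with the paper's: an Euler--Reynolds iteration with $\delta_q=\lambda_q^{-2\beta}$, an inductive bound forcing the energy gap $e(t)-\int|v_q|^2$ to be of size $\sim\delta_{q+1}$, Isett-style gluing to localize $\mathring R_q$ in time, Mikado perturbations advected by the glued flow with an amplitude $\rho_q(t)$ tuned to the energy gap, and stationary-phase control of $\int|w_{q+1}|^2$. The parameter bookkeeping and the identification of the oscillatory-integral accuracy needed for the energy identity are essentially correct.

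There is, however, one genuine gap, and it is precisely one of the two points where the paper departs from Isett. You place the perturbation ``on each interval $I_i$,'' i.e.\ over the same disjoint time intervals on which the glued stress $\mathring{\overline R}_q$ is supported. But these intervals do not cover $[0,T]$: between them lie gaps $J_i$ of comparable length $\sim\tau_q$ on which $\overline v_q$ is an exact Euler solution and, in your scheme, $w_{q+1}\equiv 0$. At such times $\int|v_{q+1}|^2=\int|\overline v_q|^2$, so the energy gap remains $\sim\delta_{q+1}\gg\delta_{q+2}$ and the inductive energy estimate fails at step $q+1$ on all of $\bigcup_i J_i$. The obvious fix---enlarging purely temporal cutoffs so that consecutive ones overlap and cover $[0,T]$---does not work either: on the overlap two Mikado families advected by \emph{different} flow maps $\Phi_i,\Phi_{i+1}$ coexist, and the cross terms $w_{o,i}\otimes w_{o,i+1}$ in the oscillation error have no cancellation (the identity $\div_\xi(W\otimes W)=0$ is a single-flow statement), producing a low-frequency error of size $\delta_{q+1}$ that destroys the scheme. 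The paper's resolution is the ``squiggling stripes'': space--time cutoffs $\eta_i(x,t)$ whose supports are pairwise disjoint as subsets of $\T^3\times[0,T]$ (so no cross terms ever appear) but whose projections onto the time axis cover $[0,T]$, with $\sum_i\int\eta_i^2(\cdot,t)\,dx\geq c_0>0$ for every $t$; the energy is then pumped via $\rho_{q,i}=\eta_i^2\rho_q/\sum_j\int\eta_j^2$ at \emph{every} time, including on the gaps $J_i$ where the stress vanishes. You flag ``managing the time cutoffs'' as delicate bookkeeping, but the construction you describe would fail without this spatial squiggling, and the idea is not present in your proposal. Two smaller omissions: you do not treat the base case (one must rescale $e$ so that $\sup e\leq\delta_1$ etc.\ in order to start from $v_0=0$), and you do not address how the glued stress is kept trace-free (the paper uses the operator $\mathcal R$ on vector potentials together with commutator estimates), which is what makes the energy of $\overline v_q$ trackable in the first place.
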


We are indeed able to prove a stronger statement than Theorem \ref{t:energy_profile}, namely an $h$-principle in the sense of \cite{DaSz2016}. Following \cite{DaSz2016} we introduce smooth strict subsolutions of the Euler equations.

\begin{definition} A smooth strict subsolution of \eqref{e:eulereq} on $\T^3\times [0,T]$ is a smooth triple $(\bar{v},\bar{p},\bar{R})$ with $\bar{R}$ a symmetric $2$-tensor, such that
\begin{equation}\label{e:ER}
\left\{\begin{array}{l}
\partial_t\bar{v}+\div(\bar{v}\otimes\bar{v})+\nabla\bar{p}=-\div\bar{R}\\ \\
\div\bar{v}=0,
\end{array}\right.
\end{equation}
and $\bar{R}(x,t)$ is positive definite for all $(x,t)$. 
\end{definition}

We then can prove that any smooth strict subsolution can be suitably approximated by $C^\beta$ solutions for any $\beta < \sfrac{1}{3}$. More precisely

\begin{theorem}\label{t:h-principle}
Let $(\bar{v},\bar{p},\bar{R})$ be a smooth strict subsolution of the Euler equations on $\T^3\times[0,T]$ and let $\beta<1/3$. Then there exists a sequence $(v_k,p_k)$ of weak solutions of \eqref{e:eulereq} such that $v_k\in C^{\beta}(\T^3\times [0,T])$, 
\begin{equation*}
v_k\overset{*}{\rightharpoonup }\bar{v}\quad\textrm{ and }\quad v_k\otimes v_k\overset{*}{\rightharpoonup }\bar{v}\otimes\bar{v} + \bar R\quad \mbox{in} \quad L^{\infty}
\end{equation*}
uniformly in time, and furthermore for all $t\in [0,T]$
\begin{equation}\label{e:h-principle_energy}
\int_{\T^3}|v_k|^2\,dx=\int_{\T^3} \left( |\bar{v}|^2+\tr\bar{R} \right)\,dx.
\end{equation}
\end{theorem}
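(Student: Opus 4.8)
The plan is to prove Theorem~\ref{t:h-principle} via the standard convex integration iteration scheme, adapted from \cite{Is2016}, with the key new twist being the careful tracking of the energy profile. I would set up an iteration producing a sequence of smooth triples $(v_q, p_q, R_q)$ solving the Euler-Reynolds system
\begin{equation*}
\partial_t v_q + \div(v_q \otimes v_q) + \nabla p_q = -\div R_q, \qquad \div v_q = 0,
\end{equation*}
indexed by a frequency parameter $\lambda_q = \lceil a^{(b^q)} \rceil$ growing super-exponentially and an amplitude parameter $\delta_q \sim \lambda_q^{-2\beta}$ measuring the size of the Reynolds stress, so that $\|v_q - v_{q-1}\|_0 \lesssim \delta_q^{1/2}$, $\|v_q\|_1 \lesssim \delta_q^{1/2}\lambda_q$, and $\|R_q\|_0 \lesssim \delta_{q+1}\lambda_q^{-\alpha}$ for a small $\alpha>0$. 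Summability of $\sum \delta_q^{1/2}\lambda_q^\beta$ for $\beta < 1/3$ (using $b$ close to $1$ and $\alpha$ small) gives convergence $v_q \to v$ in $C^\beta$, while $R_q \to 0$ ensures $v$ solves Euler. The initial triple $(v_0, p_0, R_0)$ would be taken as a suitable mollification of the given subsolution $(\bar v, \bar p, \bar R)$ — here positivity of $\bar R$ is essential so that the stress can be absorbed into the amplitudes of the building blocks — and the weak-$*$ convergence $v_q \rightharpoonup \bar v$, $v_q \otimes v_q \rightharpoonup \bar v \otimes \bar v + \bar R$ follows because each correction $v_{q+1} - v_q$ oscillates at frequency $\lambda_{q+1}$ and hence converges weakly to zero, while its quadratic self-interaction has a nonzero low-frequency average designed to equal exactly the removed part of $R_q$.

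The heart of the inductive step combines the two ingredients mentioned in the introduction. First, the \textbf{gluing step}: partition $[0,T]$ into intervals of length $\sim \lambda_q^{-1}\delta_q^{-1/2}$, solve the exact Euler equations with initial data $v_q$ on each subinterval (well-posed on this short time since $v_q$ is smooth), and glue these exact solutions together with a partition of unity in time. This produces a new Euler-Reynolds triple $(\tilde v_q, \tilde p_q, \tilde R_q)$ with $\tilde v_q$ close to $v_q$, but now with $\tilde R_q$ supported in a disjoint union of small time intervals and, crucially, having the special structure of a sum of terms of the form $\partial_t$ of something plus a spatial quadratic, so that after a further Helmholtz-type manipulation the stress to be cancelled has bounded higher derivatives. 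Second, the \textbf{perturbation step}: on each time interval where $\tilde R_q$ is active, add a perturbation $w_{q+1}$ built from Mikado flows — pairwise disjointly supported, stationary pipe flows indexed by lattice directions — with amplitudes $a_\xi(x,t) \sim \sqrt{\rho(x,t)}\, \gamma_\xi\!\big(\Id - \tilde R_q/\rho\big)$ chosen via the stationary-phase / geometric lemma so that the low-frequency part of $\sum_\xi a_\xi^2 (\xi \otimes \xi)$ reconstructs $\rho\,\Id - \tilde R_q$. The scalar function $\rho(x,t)$ is the free parameter I would use to control the energy: I would choose
\begin{equation*}
\rho(x,t) \approx \frac{1}{3(2\pi)^3}\left( e(t) - \int_{\T^3} |\tilde v_q|^2\,dx \right) + (\text{lower order corrections for } \tr \tilde R_q),
\end{equation*}
so that $\int_{\T^3} |v_{q+1}|^2\,dx = \int_{\T^3}(|v_{q+1}|^2 - |\tilde v_q|^2) + \int |\tilde v_q|^2 \approx 3(2\pi)^3 \rho + \int|\tilde v_q|^2 = e(t)$ up to an error that decays along the iteration; one then shows $|\int_{\T^3}|v_q|^2 dx - e(t)| \lesssim \delta_{q+1}$, which in the limit gives the exact identity. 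For the actual statement of Theorem~\ref{t:h-principle}, $e(t)$ is replaced by $\int_{\T^3}(|\bar v|^2 + \tr\bar R)\,dx$; note this quantity is automatically $\geq \int |\tilde v_q|^2$ up to small errors precisely because $\bar R > 0$ and $\tilde v_q \approx \bar v$, guaranteeing $\rho > 0$, which is exactly what the Mikado construction requires.

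The remaining work is the estimates: one must propagate the inductive bounds through both the gluing and the perturbation, controlling the new stress $R_{q+1}$, which decomposes into a transport error, an oscillation error (handled by the improved stationary-phase estimate / the fact that Mikado flows are stationary solutions of pressureless Euler, so the self-interaction error is a pure gradient at leading order), a Nash error, and the commutator terms from mollification; each must be shown to be $\lesssim \delta_{q+2}\lambda_{q+1}^{-\alpha}$. Verifying that the temporal support structure produced by gluing makes these errors small enough to reach $\beta = 1/3$ — in particular that the $C^1$-in-time cost of the transport error does not destroy the gain — is the decisive point, and it is exactly here that Isett's gluing idea is indispensable. I expect the \textbf{main obstacle} to be precisely this bookkeeping of the gluing step together with the simultaneous control of the energy: one must ensure that the time-localized nature of $\tilde R_q$ is compatible with choosing a single smooth $\rho(x,t)$ that both reconstructs the stress on its support and yields the global-in-time energy identity with errors summing to zero, while every constant in the hierarchy $\alpha \ll (b-1) \ll 1$ and $a \gg 1$ is chosen in the right order. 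Uniformity in time of the weak-$*$ convergence and of \eqref{e:h-principle_energy} then follows since all estimates are stated with time-independent constants.
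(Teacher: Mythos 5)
Your description of the iteration engine (gluing of exact solutions plus Mikado perturbations with a free scalar $\rho$ that pumps the energy) matches the paper's Proposition \ref{p:main}, and for fixed parameters it would indeed produce \emph{one} $C^\beta$ solution with prescribed energy. But Theorem \ref{t:h-principle} asks for a \emph{sequence} $v_k$ with $v_k\overset{*}{\rightharpoonup}\bar v$ and $v_k\otimes v_k\overset{*}{\rightharpoonup}\bar v\otimes\bar v+\bar R$, and your proposal has two genuine gaps on exactly this point. First, you cannot initialize the iteration at a mollification of $(\bar v,\bar p,\bar R)$: the inductive hypotheses require the starting stress to be trace-free and of size $\delta_1\lambda_0^{-3\alpha}\ll 1$, whereas $\bar R$ is a fixed positive-definite $O(1)$ tensor with $\tr\bar R>0$. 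The paper therefore inserts a separate first step (Theorem \ref{t:first_step}, a variant of Proposition 3.1 of \cite{DaSz2016}): a single large Mikado perturbation at frequency $\lambda_0$ that absorbs $\bar R$ down to a stress with $\|\mathring R\|_0\leq C\lambda_0^{\gamma-1}$ and $\|\tr R\|_0\leq\eps$, while recording the quantitative weak-closeness estimates $\|v-\bar v\|_{H^{-1}}\leq C\lambda_0^{-1}$ and $\|v\otimes v+R-\bar v\otimes\bar v-\bar R\|_{H^{-1}}\leq C\lambda_0^{\gamma-1}$; a rescaling by $\Gamma=(\delta_1/\eps)^{1/2}$ then places the resulting triple inside the inductive window (in particular fitting the energy gap into $[\delta_1/2,\delta_1]$).

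Second, you conflate the iteration index $q$ with the sequence index $k$. Along the iteration the $v_q$ converge \emph{uniformly} to the final solution, so they certainly do not converge weak-$*$ to $\bar v$; and that single limit differs from $\bar v$ by an amount of order $\|\bar R\|_0^{1/2}$ in $C^0$, which does not disappear. The sequence in the theorem is obtained by running the \emph{entire} construction once for each $\eps_k\to 0$ (with $\lambda_0=\lambda_0(\eps_k)$ suitably large), and the weak-$*$ convergences are deduced from the $H^{-1}$ bounds above, which tend to zero with $\eps_k$: one shows $\|v_k-\bar v\|_{H^{-1}}\lesssim\eps_k^{1/2}$ and $\|v_k\otimes v_k-\bar v\otimes\bar v-\bar R\|_{H^{-1}}\lesssim\eps_k^{1/2}$, and uses the uniform $C^0$ bounds on $v_k$ and $v_k\otimes v_k$ to upgrade $H^{-1}$ smallness to weak-$*$ convergence. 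Your heuristic that ``high-frequency corrections converge weakly to zero and their self-interaction reproduces the removed stress'' is the right intuition, but it is precisely what the $H^{-1}$ estimates of Theorem \ref{t:first_step} make rigorous; without them the quadratic convergence $v_k\otimes v_k\overset{*}{\rightharpoonup}\bar v\otimes\bar v+\bar R$ is unproved.
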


Theorem \ref{t:energy_profile} can be concluded as a simple corollary of Theorem \ref{t:h-principle}. However we give an alternative simpler and self-contained argument for Theorem \ref{t:energy_profile}: indeed the proof of Theorem \ref{t:h-principle} invokes some results of \cite{DaSz2016}, whereas the argument for Theorem \ref{t:energy_profile} is entirely contained in our note, aside from technical propositions which are classical statements in the literature, all collected in the Appendix. 

\medskip

The most important differences in our proof compared to that of \cite{Is2016} rely on the estimates for the ``gluing step'' of Isett's proof (we refer to Section \ref{s:gluing_outline} for more details) and in a simple remark concerning the regions where the perturbation is added (see Section \ref{s:perturbation_outline}). We note that, even without the extra benefit of imposing the energy profile and achieving the most general $h$-principle statement, the proof proposed here is considerably shorter than that of \cite{Is2016}. 

\subsection*{Acknowledgments} 
The work of T.B. has been partially supported by the National Science Foundation grant DMS-1600868.
The research of C.D.L. has been supported by the grant 200021$\_$159403 of the Swiss National Foundation.
L.Sz.  gratefully acknowledges the support of the ERC Grant Agreement No. 277993.
V.V. was partially supported by the National Science Foundation grant DMS-1514771 and by an
Alfred P. Sloan Research Fellowship.

\section{Outline of the proof} As already mentioned, although Theorem \ref{t:energy_profile} can be recovered as a corollary of Theorem \ref{t:h-principle}, in this section we outline an independent proof, reducing it to a suitable iterative procedure, summarized in
Proposition~\ref{p:main} below. The same iteration procedure can be used to prove Theorem~\ref{t:h-principle}, as
shown in Section \ref{s:h} at the end of the note, but the corresponding argument we will need some results from~\cite{DaSz2016}, which we state without proof. In contrast, the proof of Theorem \ref{t:energy_profile} is completely self-contained.

\subsection{Inductive proposition}
First of all, we impose for the moment that
\begin{equation}\label{e:energy_time_D}
\sup_{t\in [0,T]} \abs{\tfrac {d}{dt} e(t)}\leq 1\, 
\end{equation}
(we will see later that this can be done without loosing generality). 

Let then $q\geq 0$ be a natural number. At a given step $q$ we assume to have a triple $(v_q,p_q,\mathring{R}_q)$ to the Euler-Reynolds system \eqref{e:ER}, namely such that
\begin{equation}\label{e:euler_reynolds}
\left\{\begin{array}{l}
\partial_t v_q + \div (v_q\otimes v_q) + \nabla p_q =\div\mathring{R}_q\\ \\
\div v_q = 0\, ,
\end{array}\right.
\end{equation} 
to which we add the constraints that 
\begin{equation}\label{e:trace_free}
\tr \mathring{R}_q=0
\end{equation}
and that
\begin{equation}\label{e:press_const}
\int_{\T^3} p_q (x,t)\, dx = 0\, 
\end{equation}
(which uniquely determines the pressure). 

The size of the approximate solution $v_q$ and the error $\mathring{R}_q$ will be measured by a frequency $\lambda_q$ and an amplitude $\delta_q$, which are given by
\begin{align}
\lambda_q&= 2\pi \ceil{a^{(b^q)}}\label{e:freq_def}\\
\delta_q&=\lambda_q^{-2\beta} \label{e:size_def}
\end{align}
where $\ceil{x}$ denotes the smallest integer $n\geq x$, $a>1$ is a  large parameter, $b>1$ is close to $1$ and $0<\beta<\sfrac13$ is the exponent of Theorem \ref{t:energy_profile}. The parameters $a$ and $b$ are then related to $\beta$. 

We proceed by induction, assuming the estimates
\begin{align}
\norm{\mathring R_q}_{0}&\leq  \delta_{q+1}\lambda_q^{-3\alpha}\label{e:R_q_inductive_est}\\
\norm{v_q}_1&\leq M \delta_q^{\sfrac12}\lambda_q\label{e:v_q_inductive_est}\\
\norm{v_q}_0 & \leq 1- \delta_q^{\sfrac12}\label{e:v_q_0}\\
\delta_{q+1}\lambda_q^{-\alpha} &\leq e(t)-\int_{\T^3}\abs{  v_q}^2\,dx\leq \delta_{q+1}\label{e:energy_inductive_assumption}
\end{align}
where $0 < \alpha  < 1$ is a small parameter to be chosen suitably (which will depend upon $\beta$), and $M$ is a universal constant (which is fixed throughout the iteration and whose choice depends on certain geometric properties of the space of symmetric matrices and on the ``squiggling'' regions of the perturbation step, cf. Remark \ref{r:choice_of_M}, Lemma \ref{l:choice_of_M} and Definition \ref{d:choice_of_M}). We refer to Appendix~\ref{s:hoelder} for the definitions of the H\"older norms used above, where we take into account only {\em space regularity}. 

\begin{proposition}\label{p:main} There is a universal constant $M$ with the following property. Assume $0<\beta<\sfrac13$ and
\begin{equation}\label{e:b_beta_rel}
1<b<\frac{1-\beta}{2\beta}\,.
\end{equation}
Then there exists an $\alpha_0$ depending on $\beta$ and $b$, such that for any $0<\alpha<\alpha_0$ there exists an $a_0$ depending on $\beta$, $b$, $\alpha$ and $M$, such that for any $a\geq a_0$ the following holds: Given a strictly positive energy function $e: [0, T]\to \R$ satisfying \eqref{e:energy_time_D}, and a triple $(v_q,\mathring R_q, p_q)$ solving \eqref{e:euler_reynolds}-\eqref{e:press_const} and satisfying the estimates \eqref{e:R_q_inductive_est}--\eqref{e:energy_inductive_assumption}, then there exists a solution  $(v_{q+1}, \mathring R_{q+1}, p_{q+1})$ to \eqref{e:euler_reynolds}-\eqref{e:press_const} satisfying \eqref{e:R_q_inductive_est}--\eqref{e:energy_inductive_assumption} with $q$ replaced by $q+1$. Moreover, we have 
\begin{equation}
\norm{v_{q+1}-v_q}_0+\frac{1}{\lambda_{q+1}}\norm{v_{q+1}-v_q}_1 
\leq M\delta_{q+1}^{\sfrac12}\label{e:v_diff_prop_est}.
\end{equation}
\end{proposition}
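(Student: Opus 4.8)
The plan is to follow the convex-integration scheme of Isett adapted to the intermittent/Mikado-flow framework, organized around three stages: a \emph{gluing step} that produces a new Euler--Reynolds triple whose stress is supported on disjoint time intervals and is well-estimated in higher norms, a \emph{perturbation step} that adds a highly oscillatory correction built from Mikado flows to cancel the dominant part of the glued stress, and a bookkeeping step that verifies all inductive estimates \eqref{e:R_q_inductive_est}--\eqref{e:energy_inductive_assumption} and the difference bound \eqref{e:v_diff_prop_est} at level $q+1$. The new frequency and amplitude are $\lambda_{q+1}$, $\delta_{q+2}$, and the key intermediate scales are a large time-mollification/gluing parameter and a small space-mollification parameter, all of which will be chosen as powers of $\lambda_q$ with exponents tuned in terms of $\beta$, $b$, $\alpha$. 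The constant $M$ must be fixed first, before $\alpha_0$, $a_0$, depending only on the geometry of the Mikado decomposition of symmetric matrices near the identity and on the chosen squiggling regions.

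Concretely, I would first mollify $v_q$ in space at scale $\ell$ and solve the Euler equations exactly starting from the mollified data on a partition of $[0,T]$ into intervals of length $\tau \sim \ell^{2/3}\delta_q^{-1/6}$ or similar (the precise power is what makes the scheme close at the Onsager exponent); classical existence and stability estimates for smooth Euler solutions (to be quoted from the Appendix) give control of these exact solutions and their derivatives on each subinterval. Then I glue consecutive exact solutions together using cutoffs in time, obtaining $(\bar v_q, \bar p_q, \mathring{\bar R}_q)$ solving \eqref{e:euler_reynolds} with $\mathring{\bar R}_q$ vanishing on a definite fraction of each time interval and satisfying $\|\mathring{\bar R}_q\|_0 \lesssim \delta_{q+1}\lambda_q^{-\alpha}$ together with favorable estimates on $\|\mathring{\bar R}_q\|_N$ and on material derivatives $\|(\partial_t + \bar v_q\cdot\nabla)\mathring{\bar R}_q\|_N$; this is the step where the improved estimates advertised in Section \ref{s:gluing_outline} enter. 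Next, on the support of $\mathring{\bar R}_q$ I write $\mathrm{Id} - \mathring{\bar R}_q/\rho = \sum_i \Gamma_i^2\, (\text{Mikado direction})$ for a suitable positive amplitude $\rho = \rho(t)$ chosen so that the energy gap \eqref{e:energy_inductive_assumption} is corrected to level $\delta_{q+2}$, multiply each Mikado flow by the amplitude $\Gamma_i$ and by the temporal cutoffs, oscillate at frequency $\lambda_{q+1}$, and add the resulting principal perturbation $w_{q+1}^{(p)}$ together with an incompressibility corrector $w_{q+1}^{(c)}$ so that $v_{q+1} = \bar v_q + w_{q+1}^{(p)} + w_{q+1}^{(c)}$ is divergence free. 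Then I compute the new Reynolds stress $\mathring R_{q+1}$, which decomposes into a transport error, a Nash/flow error, an oscillation error, and a corrector error, invert $\div$ on the trace-free part via the standard operator $\mathcal R$ (stationary-phase / commutator estimates from the Appendix), subtract off the mean to enforce \eqref{e:trace_free}, and normalize the pressure to get \eqref{e:press_const}.

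The estimates to verify at level $q+1$ are then: the difference bound \eqref{e:v_diff_prop_est} follows from $\|w_{q+1}^{(p)}\|_0 \lesssim \delta_{q+1}^{1/2}$ and $\|w_{q+1}^{(p)}\|_1 \lesssim \delta_{q+1}^{1/2}\lambda_{q+1}$, which in turn control $\|v_{q+1}\|_1 \le M\delta_{q+1}^{1/2}\lambda_{q+1}$ \eqref{e:v_q_inductive_est} for a suitable $M$; the $C^0$ bound \eqref{e:v_q_0} follows by tracking $\|v_q\|_0 \le 1 - \delta_q^{1/2}$ through the gluing (no increase) and the perturbation (increase $\le M\delta_{q+1}^{1/2} \le \delta_{q+1}^{1/2} + \text{lower order}$, absorbing the geometric constant by the gap between $\delta_q^{1/2}$ and $\delta_q^{1/2} - \delta_{q+1}^{1/2}$); the energy estimate \eqref{e:energy_inductive_assumption} at level $q+1$ comes from the choice of $\rho(t)$, using $\int |w_{q+1}^{(p)}|^2 \approx \int \rho$ up to errors of size $\delta_{q+2}\lambda_{q+1}^{-\alpha}$ controlled by the oscillation in $\lambda_{q+1}$ and the smallness of $\delta_{q+1}$ compared to $\delta_{q+2}^{1/2}$-type quantities, plus \eqref{e:energy_time_D} to handle the time variation of $e$; and the new stress bound $\|\mathring R_{q+1}\|_0 \le \delta_{q+2}\lambda_{q+1}^{-3\alpha}$ \eqref{e:R_q_inductive_est} is the crux — each of the four error terms must be shown, after applying $\mathcal R$, to be bounded by $\delta_{q+2}\lambda_{q+1}^{-3\alpha}$, which forces the relation \eqref{e:b_beta_rel} between $b$ and $\beta$ (so that $\lambda_{q+1}^{-1}\cdot(\text{derivative cost})\cdot(\text{amplitude}) \ll \delta_{q+2}$) and fixes $\alpha_0$ small and $a_0$ large. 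I expect the main obstacle to be exactly this last verification: balancing the oscillation error (which wants $\lambda_{q+1}$ large relative to intermediate frequencies) against the transport error from the material derivative of the glued stress (which wants the gluing time scale $\tau$ not too small, hence $\lambda_{q+1}$ not too large), and simultaneously keeping the $\lambda_q^{-3\alpha}$ and $\lambda_q^{-\alpha}$ losses consistent through one step — this is where the sharper gluing estimates are essential and where the choice of the free parameters must be made with care.
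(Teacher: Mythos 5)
Your proposal follows essentially the same route as the paper: mollify in space at scale $\ell$, glue exact Euler solutions on time intervals of length $\tau_q$ following Isett, add a Lagrangian Mikado perturbation (with incompressibility corrector and squiggling space-time cutoffs) that cancels the glued stress and pumps the energy to the prescribed level $e(t)-\delta_{q+2}/2$, and close the induction through the same Nash/transport/oscillation error decomposition and the parameter inequality forced by \eqref{e:b_beta_rel}. The one substantive imprecision is your guess for the gluing time scale: the paper takes $\tau_q=\ell^{2\alpha}\delta_q^{-\sfrac12}\lambda_q^{-1}$, essentially the CFL time of $v_\ell$, rather than $\ell^{2/3}\delta_q^{-1/6}$ -- but since you explicitly flag this as the parameter to be tuned, the plan is sound.
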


The proof of Proposition~\ref{p:main} is summarized in the Sections \ref{s:stages}, \ref{s:mollification}, \ref{s:gluing_outline} and \ref{s:perturbation_outline}, but its details will occupy most of the paper and will be completed in Section~\ref{sec:p_main} below. We show next that this proposition immediately implies Theorem~\ref{t:energy_profile}.

\subsection{Proof of Theorem \ref{t:energy_profile}} First of all, we fix any H\"older exponent $\beta< \sfrac{1}{3}$ and also the
parameters $b$ and $\alpha$, the first satisfying \eqref{e:b_beta_rel} and the second smaller than the threshold given in Proposition \ref{p:main}. Next we show that, without loss of generality, we may further assume the energy profile satisfies 
\begin{equation}\label{e:normalized_energy}
\inf_t e(t) \geq \delta_{1}\lambda_0^{-\alpha},\qquad\sup_t e(t)\leq \delta_{1}, \quad\mbox{and}\quad \sup_t e'(t)\leq 1\, ,
\end{equation}
provided the parameter $a$ is chosen sufficiently large. 

To see this, we first note that the Euler equations are invariant under the transformation
\[
v(x,t)\mapsto \Gamma v(x,\Gamma t) \quad \mbox{and} \quad p(x,t) \mapsto \Gamma^2 p(x,\Gamma t)\,.
\]
Thus if we choose 
\[
\Gamma=\left(\frac{\delta_{1}}{\sup_t e(t)}\right)^{\sfrac12},
\] 
then using the scaling invariance, the stated problem  reduces to finding a solution with the energy profile given by
\[
\tilde e(t)=\Gamma^2 e(t)\,,
\]
for which we have
\[
\inf_t \tilde e(t) \geq \frac{\delta_{1}\inf_t e(t)}{\sup_t e(t)}, \qquad
\sup_t \tilde e(t)\leq  \delta_{1}, \qquad \mbox{and}\qquad
\sup_t\tilde e'(t)\leq \left(\frac{\delta_{1}}{\sup_t e(t)}\right)^{\sfrac32} \sup_t e'(t).
\]
If $a$ is chosen sufficiently large then we can ensure
\[\sup_t\tilde e'(t)\leq \left(\frac{\delta_{1}}{\sup_t e(t)}\right)^{\sfrac32}  \sup_t e'(t)\leq 1, \qquad\mbox{and}\qquad \frac{\inf_t e(t)}{\sup_t e(t)}\geq \lambda_0^{-\alpha}\,.\]

Now we apply Proposition \ref{p:main} iteratively with $(v_0,R_0, p_0)=(0,0,0)$. Indeed the pair $(v_0, R_0)$ trivially satisfies \eqref{e:R_q_inductive_est}--\eqref{e:v_q_0}, whereas the estimate \eqref{e:energy_inductive_assumption} and \eqref{e:energy_time_D} follows as a consequence of \eqref{e:normalized_energy}. 
Notice that by \eqref{e:v_diff_prop_est} $v_q$ converges uniformly to some continuous $v$. Moreover, we recall that the pressure
is determined by
\begin{equation}\label{e:pressure_eq}
\Delta p_q = \div \div (-v_q\otimes v_q + \mathring{R}_q)
\end{equation}
and \eqref{e:press_const} and thus $p_q$ is also converging to some pressure $p$ (for the moment only in $L^r$ for every $r<\infty$). Since $\mathring{R}_q\to 0$ uniformly, the pair $(v,p)$ solves the Euler equations. 

Observe that using \eqref{e:v_diff_prop_est} we also infer\footnote{Throughout the manuscript we use the the notation $x \lesssim y$ to denote  $x \leq C y$, for a sufficiently large constant $C>0$, which is independent of $a,b$, and $q$, but may change from line to line.} 
\begin{align*}
\sum_{q=0}^{\infty} \norm{v_{q+1}-v_q}_{\beta'} &
\lesssim \; \sum_{q=0}^{\infty} \norm{v_{q+1}-v_q}_{0}^{1-\beta'}\norm{v_{q+1}-v_q}_{1}^{\beta'}
\lesssim \; \sum_{q=0}^{\infty} \delta_{q+1}^{\frac{1-\beta'}2}\left(\delta_{q+1}^{\sfrac12}\lambda_q\right)^{\beta'} \notag
\lesssim  \sum_{q=0}^{\infty} \lambda_q^{\beta'-\beta}
\end{align*}
and hence that $v_q$ is uniformly bounded in $C^0_tC^{\beta'}_x$ for all $\beta'<\beta$. To recover the time regularity, we could use the Euler equations and the general result in \cite{Isett-time}.  Nevertheless, we believe that the following short and self-contained proof of the time-regularity may be of independent interest: 

Fix a smooth  standard mollifier $\psi$ {\em in space}, let $q \in \N$,  and consider $\tilde v_q := v*\psi_{2^{-q}}$, where $\psi_{\ell}(x) = \ell^{-3} \psi(x \ell^{-1})$. From standard mollification estimates we have
\begin{align}
\norm{\tilde  v_q-v}_0\lesssim \norm{v}_{\beta'} 2^{-q \beta'},
\label{e:trivial:1}
\end{align}
 and thus $\tilde v_q - v \to 0$ uniformly as $q \to \infty$. Moreover, $\tilde v_q$ obeys the following equation
 \[
 \partial_t \tilde v_{q}+\div\left(v  \otimes v \right)*\psi_{2^{-q}}  +\nabla p * \psi_{2^{-q}}=0.
 \]
Next, since
\[
-\Delta p *\psi_{2^{-q}} = \div \div (  v\otimes v )* \psi_{2^{-q}} \, ,
\]
using Schauder's estimates, for any fixed $\varepsilon>0$ we get
\[
\|\nabla p * \psi_{2^{-q}}\|_0 \leq \|\nabla p * \psi_{2^{-q}}\|_\varepsilon \lesssim \|v\otimes v\|_{\beta'} 2^{q(1 + \eps - \beta')}
\lesssim \|v\|^2_{\beta'} 2^{q(1 + \eps - \beta')}\, ,
\]
(where the constant in the estimate depends on $\varepsilon$ but not on $q$). Similarly, 
\[
\norm{ \left(v  \otimes v \right)*\psi_{2^{-q}}}_1 \lesssim\norm{v\otimes v}_{\beta'}  2^{q(1-\beta')}  \lesssim \norm{v}_{\beta'}^2  2^{q(1-\beta')} \,.
\]
Thus the above estimates yield
\begin{align}
\norm{\partial_t \tilde  v_{q}}_0 &\lesssim  \|v\|^2_{\beta'} 2^{q(1 + \eps - \beta')}\,.
\label{e:trivial:2}
\end{align}
Next, for $\beta'' < \beta'$ we conclude from \eqref{e:trivial:1} and \eqref{e:trivial:2} that
\begin{align*}
\norm{\tilde v_q - \tilde v_{q+1}}_{C^0_x C^{\beta''}_t}  
&\lesssim \left( \norm{\tilde v_q - v}_0 + \norm{\tilde v_{q+1} - v}_0 \right)^{1-\beta''}
\left(\norm{\partial_t \tilde v_q}_0  + \norm{\partial_t \tilde v_{q+1}}_0\right)^{\beta''}
\notag\\
&\lesssim \norm{v}_{\beta'}^{1 + \beta''} 2^{- q \beta' (1-\beta'')} 2^{q \beta''(1+ \eps - \beta')} = \norm{v}_{\beta'}^{1 + \beta''} 2^{- q (\beta' - (1+ \eps) \beta'')} \notag\\
&\lesssim \norm{v}_{\beta'}^{1 + \beta''} 2^{-q \eps}
\end{align*}
Here we have chosen $\eps>0$ sufficiently small (in terms of $\beta'$ and $\beta''$) so that  that $\beta' - (1+ \eps) \beta'' \geq \eps $. Thus, the series 
\[
v = \tilde v_0 + \sum_{q\geq0} (\tilde v_{q+1} - \tilde v_q)
\]
converges in $C^0_x C^{\beta''}_t$. Since we already know $v \in C^0_t C^{\beta'}_x$, we obtain that $v \in C^{\beta''}([0,T] \times \T^3)$ as desired, with $\beta'' < \beta' < \beta < 1/3$ arbitrary.

Finally, since $\delta_{q+1}\rightarrow 0$ as $q\rightarrow \infty$, from \eqref{e:energy_inductive_assumption} we have
\begin{equation*}
\int_{\T^3}\abs{  v}^2\,dx= e(t)\, ,
\end{equation*}
which completes the proof of the theorem. 

\subsection{Stages}\label{s:stages}
Except for Section \ref{s:h}, the rest of the paper is devoted to the proof of Proposition \ref{p:main}. It will be useful to make the assumption that $\alpha$ is small enough so to have
\begin{align}\label{e:some_param_ineq}
\lambda_{q}^{3\alpha}\leq\left(\frac{\delta_q}{\delta_{q+1}}\right)^{\sfrac32}\leq \frac{\lambda_{q+1}}{\lambda_q}\, ,
\end{align}
which also require that $a$ is large enough to absorb any constant appearing from the ratio $\lambda_q/a^{(b^q)}$, for which we have the elementary bounds
\begin{equation}\label{e:bloody_integers}
2\pi \leq \frac{\lambda_q}{a^{b^q}}\leq 4\pi\, .
\end{equation}

The proof consists of three stages, in each of which we modify $v_q$. Roughly speaking, the stages are as follows:
\begin{itemize}
\item Mollification: $(v_q,\mathring{R}_q)\mapsto (v_\ell,\mathring{R}_\ell)$;
\item Gluing: $(v_\ell,\mathring{R}_\ell)\mapsto (\bar{v}_q,\mathring{\overline R}_q)$;
\item Perturbation: $(\bar{v}_q,\mathring{\overline R}_q)\mapsto (v_{q+1},\mathring{R}_{q+1})$.	
\end{itemize}

\subsection{Mollification step}\label{s:mollification}
The first stage is mollification: we mollify $v_q$ at length scale $\ell$ in order to handle the \emph{loss of derivative} problem, typical of convex integration schemes. To this aim, we fix a standard mollification kernel $\psi$ {\em in space} and introduce the mollification parameter
\begin{equation}\label{e:ell_def}
\ell:=\frac{\delta_{q+1}^{\sfrac 12}}{\delta_q^{\sfrac12 }\lambda_q^{1+\sfrac{3\alpha}{2}}}\, ,
\end{equation}
and define
\begin{align*}
v_{\ell}:=& v_q*\psi_\ell\\
\mathring{R}_{\ell}:=& \mathring R_q*\psi_\ell + (v_q\mathring\otimes v_q)*\psi_\ell - v_{\ell}\mathring\otimes v_{\ell} 
\end{align*}
where $f \mathring \otimes g$ is the traceless part of the tensor $f\otimes g$. These functions obey the equation
\begin{equation}\label{e:euler_reynolds_l}
\left\{\begin{array}{l}
\partial_t v_\ell + \div (v_\ell\otimes v_\ell) + \nabla p_\ell =\div\mathring{R}_\ell\\ \\
\div v_\ell = 0\, ,
\end{array}\right.
\end{equation} 
in view of \eqref{e:euler_reynolds}.

Observe, again choosing $\alpha$ sufficiently small and $a$ sufficiently large we can assume
\begin{equation}\label{e:compare-lambda-ell}
\lambda_q^{-3/2}\leq\ell \leq \lambda_q^{-1}\,,
\end{equation}
which will be applied repeatedly in order to simplify the statements of several estimates.

From \eqref{e:compare-lambda-ell}, standard mollification estimates and Proposition \ref{p:CET} we  obtain the following bounds\footnote{In the following, when considering higher order norms $\|\cdot\|_N$ or $\|\cdot\|_{N+1}$, the symbol $\lesssim$ will imply that the constant in the inequality might also depend on $N$.}

\begin{proposition}\label{p:est_mollification}
\begin{align}
\norm{v_{\ell}-v_q}_0&\lesssim \delta_{q+1}^{\sfrac12}\lambda_q^{-\alpha}\,,\label{e:v:ell:0}\\
\norm{v_{\ell}}_{N+1} &\lesssim \delta_q^{\sfrac 12}\lambda_q\ell^{-N}\qquad \forall N \geq 0\,,  \label{e:v:ell:k}\\
\norm{\mathring{R}_{\ell}}_{N+\alpha}&\lesssim  \delta_{q+1}\ell^{-N+\alpha} \qquad \forall N\geq 0\,. \label{e:R:ell}\\
\abs{\int_{\T^3}\abs{v_q}^2-\abs{v_{\ell}}^2\,dx} &\lesssim \delta_{q+1}\ell^{\alpha}\,.
\label{e:vq_vell_energy_diff}
\end{align}
\end{proposition}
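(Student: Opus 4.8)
The plan is to derive Proposition~\ref{p:est_mollification} from three ingredients: standard mollification estimates for convolution with $\psi_\ell$, the inductive assumptions \eqref{e:R_q_inductive_est}--\eqref{e:v_q_inductive_est}, and the commutator estimate of Constantin--E--Titi (Proposition~\ref{p:CET}), together with the comparison \eqref{e:compare-lambda-ell} between $\ell$ and $\lambda_q^{-1}$, which is what lets us freely trade powers of $\ell^{-1}$ for powers of $\lambda_q$ in the statements. I would treat the four displayed estimates in turn.

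For \eqref{e:v:ell:0}, I would write $v_\ell - v_q = v_q * \psi_\ell - v_q$ and use the standard mollification bound $\norm{f*\psi_\ell - f}_0 \lesssim \ell \norm{f}_1$; by \eqref{e:v_q_inductive_est} this is $\lesssim \ell M \delta_q^{\sfrac12}\lambda_q$, and substituting the definition \eqref{e:ell_def} of $\ell$ gives exactly $\delta_{q+1}^{\sfrac12}\lambda_q^{-3\alpha/2} \leq \delta_{q+1}^{\sfrac12}\lambda_q^{-\alpha}$ (absorbing $M$ into $\lesssim$, which is permissible here since $M$ is universal). For \eqref{e:v:ell:k}, since $v_\ell = v_q * \psi_\ell$, I would invoke $\norm{f*\psi_\ell}_{N+1} \lesssim \ell^{-N}\norm{f}_1$ for every $N\geq 0$ and again use \eqref{e:v_q_inductive_est}. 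For \eqref{e:vq_vell_energy_diff}, I would note $\int |v_q|^2 - |v_\ell|^2 = \int (v_q - v_\ell)\cdot(v_q + v_\ell)$ and bound it by $\norm{v_q - v_\ell}_0 (\norm{v_q}_0 + \norm{v_\ell}_0)$; a cleaner route is to write $\abs{v_q}^2 - \abs{v_\ell}^2 = \abs{v_q}^2 - \abs{v_q}^2 * \psi_\ell + (\abs{v_q}^2*\psi_\ell - \abs{v_\ell}^2)$, where the first term integrates to zero and the second is the trace of the CET commutator $(v_q\otimes v_q)*\psi_\ell - v_\ell\otimes v_\ell$, whose $C^0$ norm is $\lesssim \ell^{2\gamma}\norm{v_q}_\gamma^2$ for any $\gamma$; interpolating $\norm{v_q}_\gamma$ between $\norm{v_q}_0 \lesssim 1$ and $\norm{v_q}_1 \lesssim M\delta_q^{\sfrac12}\lambda_q$ and optimizing, combined with $\ell\lambda_q \lesssim 1$, should yield the bound $\delta_{q+1}\ell^\alpha$; one checks the exponent bookkeeping using $\delta_{q+1}^{\sfrac12}/\delta_q^{\sfrac12} = (\ell\lambda_q^{1+3\alpha/2})$ from \eqref{e:ell_def}.

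The estimate \eqref{e:R:ell} for $\mathring R_\ell$ is the one requiring the most care, since $\mathring R_\ell$ has two pieces: $\mathring R_q * \psi_\ell$ and the CET commutator term $(v_q\mathring\otimes v_q)*\psi_\ell - v_\ell \mathring\otimes v_\ell$. For the first piece, $\norm{\mathring R_q * \psi_\ell}_{N+\alpha} \lesssim \ell^{-N-\alpha}\norm{\mathring R_q}_0 \lesssim \ell^{-N-\alpha}\delta_{q+1}\lambda_q^{-3\alpha}$, and one absorbs $\ell^{-2\alpha}\lambda_q^{-3\alpha} \leq 1$ using $\ell \geq \lambda_q^{-3/2}$ (which gives $\ell^{-2\alpha} \leq \lambda_q^{3\alpha}$, cancelling exactly); actually one wants $\ell^{-N-\alpha}\lambda_q^{-3\alpha} \leq \ell^{-N+\alpha}$, i.e. $\ell^{-2\alpha}\leq \lambda_q^{3\alpha}$, true. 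For the commutator piece, I would apply Proposition~\ref{p:CET} directly, which (in its higher-order form) gives $\norm{(v_q\otimes v_q)*\psi_\ell - v_\ell\otimes v_\ell}_{N+\alpha} \lesssim \ell^{2-N-\alpha}\norm{v_q}_1^2$ type bounds, more precisely $\lesssim \ell^{-N}\norm{v_q}_\alpha\norm{v_q}_1$ or similar; using \eqref{e:v_q_inductive_est} this becomes $\lesssim \ell^{-N}(\ell^{1+3\alpha/2} \lambda_q)^{?}\delta_q M^2 \lambda_q^2 \cdot(\ldots)$, and after substituting $\delta_q^{1/2}\ell\lambda_q^{1+3\alpha/2} = \delta_{q+1}^{1/2}$ this collapses to $\lesssim \delta_{q+1}\ell^{-N+\alpha}$, modulo powers of $\lambda_q^\alpha \ell^\alpha \lesssim \lambda_q^{-\alpha}\leq 1$.

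I expect the main obstacle to be the precise exponent bookkeeping in \eqref{e:R:ell}: one has to be careful about exactly which form of the CET commutator estimate Proposition~\ref{p:CET} provides (the $\ell^{2\gamma}$-type gain versus the mixed $\norm{\cdot}_1\norm{\cdot}_\gamma$ form), and about verifying that each stray factor of $\lambda_q^\alpha$, $\ell^\alpha$, or $\ell\lambda_q$ can indeed be absorbed either into the $\lesssim$ (when it is bounded by a universal constant) or into the claimed $\ell^{\pm\alpha}$ on the right-hand side, using repeatedly the two-sided bound \eqref{e:compare-lambda-ell} and the definition \eqref{e:ell_def} of $\ell$. The analytic content is entirely classical; the only real work is ensuring the powers match.
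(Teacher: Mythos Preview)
Your approach is correct and essentially identical to the paper's: standard mollification for \eqref{e:v:ell:0}--\eqref{e:v:ell:k}, and Proposition~\ref{p:CET} in its $\ell^{2-r}\|v_q\|_1^2$ form for both the commutator piece of \eqref{e:R:ell} and for \eqref{e:vq_vell_energy_diff}, with the exponent cleanup via $\lambda_q^{-3\alpha}\leq \ell^{2\alpha}$ from \eqref{e:compare-lambda-ell}. Two small remarks: your first route for \eqref{e:vq_vell_energy_diff} via $(v_q-v_\ell)\cdot(v_q+v_\ell)$ only yields $\delta_{q+1}^{1/2}$ and is too weak, so you must use your ``cleaner route'' (which is exactly what the paper does, noting $\int|v_q|^2=\int(|v_q|^2)_\ell$ and bounding the CET commutator by $\|v_q\|_1^2\ell^2$); and for both that estimate and the commutator in \eqref{e:R:ell} you should stick to the $\|v_q\|_1$ form of Proposition~\ref{p:CET} as stated in the paper rather than the H\"older variant $\ell^{2\gamma}\|v_q\|_\gamma^2$, since only the former is provided here and it already gives $\delta_q\lambda_q^2\ell^2=\delta_{q+1}\lambda_q^{-3\alpha}\leq\delta_{q+1}\ell^\alpha$ directly without any interpolation.
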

\begin{proof}[Proof of Proposition~\ref{p:est_mollification}] 
The bounds \eqref{e:v:ell:0} and \eqref{e:v:ell:k} follow from the obvious estimates
\[
\|v_\ell - v_q\|_0 \leq \|v_q\|_1 \ell \lesssim \delta_q^{\sfrac{1}{2}} \lambda_q \ell \lesssim \delta_{q+1}^{\sfrac{1}{2}} \lambda_q^{-\alpha}\, 
\]
and
\[
\|v_\ell\|_{N+1}\leq \|v_q\|_1 \ell^{-N} \lesssim \delta_q^{\sfrac 12}\lambda_q\ell^{-N}\, .
\]
Next, applying Proposition \ref{p:CET},
\begin{align*}
\norm{\mathring{R}_{\ell}}_{N+\alpha}\lesssim &\|\mathring{R}_q\|_0 \ell^{-N-\alpha} + \|v_q\|_1^2 \ell^{2-N-\alpha}
\lesssim \delta_{q+1} \lambda_q^{-3\alpha} \ell^{-N-\alpha} + \delta_q \lambda_q^2 \ell^2 \ell^{-N-\alpha}
\lesssim \delta_{q+1} \lambda_q^{-3\alpha}  \ell^{-N-\alpha}\, ,
\end{align*}
on the other hand, by \eqref{e:compare-lambda-ell} $\lambda_q^{-3\alpha} \leq \ell^{2\alpha}$, from which \eqref{e:R:ell} follows. Similarly, by Proposition \ref{p:CET},
\begin{align*}
\abs{\int_{\T^3}\abs{v_q}^2-\abs{v_{\ell}}^2\,dx} = &\abs{\int_{\T^3}(\abs{v_q}^2)_\ell-\abs{v_{\ell}}^2\,dx} \lesssim \norm{(\abs{v_q}^2)_\ell-\abs{v_{\ell}}^2}_0\lesssim \norm{v_q}_1^2\ell^2\, ,
\end{align*}
which implies \eqref{e:vq_vell_energy_diff}. 
\end{proof}

\subsection{Gluing step}\label{s:gluing_outline}
In the second stage we encounter the new crucial ingredient introduced by Isett in \cite{Is2016}: we glue together exact solutions to the Euler equations in order to produce a new $\overline v_q$, close to $v_q$, whose associated Reynolds stress error has support in pairwise disjoint temporal regions of length $\tau_q$ in time, where
\begin{equation}\label{e:tau_def}
\tau_q= \frac{\ell^{2\alpha}}{\delta_q^{\sfrac 12}\lambda_q}.
\end{equation} 

The parameter $\tau_q$ should be compared to the parameter $\mu^{-1}$ used in the paper \cite{BuDLeIsSz2015}. Indeed, $\tau_q^{-1}$ satisfies precisely the same parameter inequalities that $\mu$ satisfies in Section 2 of \cite{BuDLeIsSz2015}. We note in particular that like in \cite{BuDLeIsSz2015} we have the CFL-like condition
\begin{equation}
\tau_q \norm{v_{\ell}}_{1+\alpha} \stackrel{\eqref{e:v:ell:k}}{\lesssim} \tau_q\delta_q^{\sfrac{1}{2}} \lambda_q \ell^{-\alpha}  \lesssim \ell^{\alpha} \ll 1 \label{e:CFL}
\end{equation}
as long as $a$ is sufficiently large.

More precisely, we aim to construct a new triple $(\overline v_q,  \mathring{\overline R_q},\overline p_q)$ solving the Euler Reynolds equation \eqref{e:euler_reynolds} such that the temporal support of $ \mathring{\overline R_q} $ is contained in pairwise disjoint intervals $I_i$ of length $\sim\tau_q$ and such that the gaps between neighbouring intervals is also of length $\sim\tau_q$. More precisely, for any $n\in\Z$ let 
$$
t_n=n\tau_q,\qquad I_n=[t_n+\tfrac{1}{3}\tau_q,t_n+\tfrac{2}{3}\tau_q]\cap [0,T],\qquad J_n=(t_n-\tfrac{1}{3}\tau_q,t_n+\tfrac{1}{3}\tau_q)\cap [0,T] \, .
$$
We require
\begin{equation}\label{e:Rq_support}
\supp \mathring{\overline R_q}  \subset \bigcup_{n\in \N} I_n\times\T^3.
\end{equation}

Moreover, $(\overline v_q,  \mathring{\overline R_q})$ 
will satisfy the following estimates for any $N\geq 0$
\begin{align}
\norm{\overline v_q-v_{\ell}}_0&\lesssim \delta_{q+1}^{\sfrac12}\ell^{\alpha} \label{e:overline_v} \\
\norm{\overline v_q}_{1+N} &\lesssim \delta_q^{\sfrac 12}\lambda_q \ell^{-N} \label{e:overline_v_N}\\
\norm{\mathring{\overline R_q}}_{N+\alpha} &\lesssim \delta_{q+1}\ell^{-N+\alpha}  \label{e:overline_R}\\
\norm{\partial_t \mathring{\overline R_q}+(\overline v_q\cdot \nabla) \mathring{\overline R_q}}_{N+\alpha} &\lesssim \delta_{q+1}\delta_q^{\sfrac12}\lambda_q\ell^{-N-\alpha}\label{e:Dt:overline_R}\\
\left| \int_{\T^3} |\bar v_q|^2 - |v_\ell |^2 dx \right| &\lesssim \delta_{q+1}\ell^\alpha\label{e:overline_energy}
\end{align}
where the implicit constants depend only on $M,\alpha$, and $N$, cf. Propositions \ref{p:vq:vell}, \ref{p:Rq} and
\ref{p:glued:energy}. 

The gluing procedure will be broken up into two parts: first, we construct a sequence of exact solutions to the Euler equations with appropriate stability estimates in Section \ref{s:stability} and then we glue the solutions together in Section \ref{s:gluing} with a partition of unity in order to construct $\overline v_q$ satisfying the properties mentioned above. This is indeed the key idea of Isett in \cite{Is2016}. The main difference with \cite{Is2016} is in the construction of the tensor $\mathring{\overline R_q}$: in this paper we use the usual elliptic operators introduced in \cite{DlSz2013}. This has the advantage that our Reynolds stress remains trace free, in contrast to the one of \cite{Is2016}, and in turn this is crucial to control the energy in the perturbation step below. It should be noticed that in \cite{Is2016} the author resorts to a different definition of $\mathring{\overline R_q}$ because he is not able to find efficient estimates. Our main technical improvement is that this difficulty can be overcome employing suitable commutator estimates on the advective derivative of differential operators 
of negative order, cf. the proof of Proposition \ref{p:S_est} and Proposition \ref{p:com:CZ:multiplication}. This remark allows us not only to keep a better control on the energy and a trace-free Reynolds stress with the desired estimate, but it also shortens the arguments considerably compared to~\cite{Is2016}.

\subsection{Perturbation and proof of Proposition \ref{p:main}}\label{s:perturbation_outline}

The gluing procedure can be used to localize the Reynolds stress error $\mathring{\overline R}_q$ to small disjoint temporal regions, but it cannot be used to completely eliminate the error. 

First of all note that as a corollary of \eqref{e:energy_inductive_assumption}, \eqref{e:vq_vell_energy_diff} and \eqref{e:overline_energy}, by choosing $a$ sufficiently large we can ensure that
\begin{equation}\label{e:voverline_energy_error}
\frac{\delta_{q+1}}{2\lambda_q^{\alpha}}\le e(t)-\int_{\T^3}\abs{\overline v_q}^2\,dx\leq 2\delta_{q+1}\,.
\end{equation}
Starting with the solution $(\overline{v}_q,\overline{p}_q,\mathring{\overline R_q})$ satisfying \eqref{e:Rq_support} and  the estimates \eqref{e:overline_v}-\eqref{e:voverline_energy_error}, we then produce a new solution $(v_{q+1},p_{q+1},\mathring{R}_{q+1})$ of the Euler-Reynolds system \eqref{e:euler_reynolds} with estimates
\begin{align}
\|v_{q+1}-\overline v_q\|_0 + \lambda_{q+1}^{-1} \|v_{q+1}-\overline v_q\|_1&\leq \frac{M}{2}\delta_{q+1}^{\sfrac12}\label{e:outline_v_diff}\\
\|\mathring{R}_{q+1}\|_\alpha &\lesssim \frac{\delta_{q+1}^{\sfrac12}\delta_q^{\sfrac12}\lambda_q}{\lambda_{q+1}^{1-4\alpha}}\,.\label{e:outline_R_est}\\
    \abs{e(t)-\int_{\T^3}\abs{v_{q+1}}^2\,dx-\frac{\delta_{q+2}}2 } &\lesssim \frac{\delta_q^{\sfrac12}\delta_{q+1}^{\sfrac12}\lambda_q^{1+ 2\alpha}}{\lambda_{q+1}}\, ,\label{e:outline_energy_diff}
\end{align}
cf. Corollary \ref{c:perturbation} and Propositions \ref{p:R_q+1} and \ref{p:energy}.

As in previous papers  \cite{DlSzJEMS,Is2013,BuDLeSz2013,BuDLeIsSz2015,BuDLeSz2016} the key idea, introduced in \cite{DlSz2013}, for reducing the size of the error is to add a highly oscillatory perturbation $w_{q+1}$ to $\overline v_q$. Previous schemes heavily relied on Beltrami flows, but these seemed insufficient to push the method beyond H\"older exponent $\sfrac15$. A new set of flows, called Mikado flows, with much better properties were introduced in \cite{DaSz2016} and indeed, a key element in the proof of Isett \cite{Is2016} is the observation, already used in \cite{DaSz2016}, that Mikado flows behave better under advection by a mean flow. 

An important point is that the Mikado flows will not only be used to ``cancel'' the error $\mathring{\overline{R}}_q$, but also to ``improve the energy'' in areas where the error vanishes identically. In particular, the perturbation will be added in spacetime regions which are disjoint and contained in time-slabs of thickness $2\tau_q$, but with the property that their projections on the time axis is a covering of the interval $[0,T]$. 

\begin{proof}[Proof of Proposition \ref{p:main}] The estimate  \eqref{e:v_diff_prop_est} is a consequence of \eqref{e:v:ell:0}, \eqref{e:v:ell:k}, \eqref{e:overline_v}, \eqref{e:overline_v_N} and \eqref{e:outline_v_diff}:
\[
\norm{v_{q+1}-v_q}_0 + \lambda_{q+1}^{-1} \norm{v_{q+1}-v_1}_1 \leq  \frac{M}{2} \delta_{q+1}^{\sfrac{1}{2}} \lambda_{q+1} + C \delta_{q+1}^{\sfrac{1}{2}} \ell^\alpha + C \delta_q^{\sfrac{1}{2}} \lambda_q \lambda_{q+1}^{-1}\, ,
\]
where the constant $C$ depends on $\alpha, \beta, M$, but not on $a,b$ and $q$. In particular, for every fixed $b$ \eqref{e:v_diff_prop_est} holds if $a$ is large enough. For \eqref{e:v_q_inductive_est}, we use 
the induction assumption to get
\[
\|v_{q+1}\|_1 \leq M \delta_q^{\sfrac{1}{2}} \lambda_q + \frac{M}{2} \delta_{q+1}^{\sfrac{1}{2}} \lambda_{q+1} + C \delta_{q+1}^{\sfrac{1}{2}} \ell^{\alpha} + C \delta_q^{\sfrac{1}{2}} \lambda_q \lambda_{q+1}^{-1}
\]
and again a sufficiently large choice of $a$ will guarantee $\|v\|_{q+1}\leq M \delta_{q+1}^{\sfrac{1}{2}} \lambda_{q+1}$. Similarly for \eqref{e:v_q_0}, which will follow from
\[
\|v_{q+1}\|_0 \leq \|v_q\|_0 + \|v_{q+1}-v_q\|_0 \leq 1-\delta_q^{\sfrac{1}{2}} + M \delta_{q+1}^{\sfrac{1}{2}}\, .
\]

From \eqref{e:outline_R_est} and \eqref{e:outline_energy_diff}, the inequalities \eqref{e:R_q_inductive_est} and \eqref{e:energy_inductive_assumption} follow as a consequence of the parameter inequality
\begin{equation}\label{e:nash_ineq}
\frac{\delta_q^{\sfrac12}\delta_{q+1}^{\sfrac12}\lambda_q}{\lambda_{q+1}}\leq \frac{\delta_{q+2}}{\lambda_{q+1}^{8\alpha}}\,.
\end{equation}
To see this, one divides by the right hand side, takes logarithms and divides by $\log \lambda_{q}$, to obtain
\[-\beta-\beta b+1-b+2b^2\beta+8b\alpha +O\left(\frac{1}{\log \lambda_q}\right)\leq 0, \]
where the error term $O\left(\frac{1}{\log \lambda_q}\right)$ is due to the constants in \eqref{e:bloody_integers}. From the relation \eqref{e:b_beta_rel}, if $\alpha$ is sufficiently small we obtain
\begin{equation}\label{e:choiceofb}
-\beta-\beta b+1-b+2b^2\beta+8b\alpha <0\,. 
\end{equation}
Hence fixing $b$ to satisfy \eqref{e:b_beta_rel}, choosing subsequently $\alpha$ sufficiently small and then $a$ sufficiently large, we obtain \eqref{e:nash_ineq}.

Finally, an entirely analogous argument shows \eqref{e:energy_inductive_assumption} from \eqref{e:outline_energy_diff}.
\end{proof}

\section{Stability estimates for classical exact solutions}\label{s:stability}

\subsection{Classical solutions} For each $i$, let $t_i= i \tau_q$, and consider smooth solutions of the Euler equations
\begin{equation}
\left\{\begin{array}{l}
\partial_t v_i + \div (v_i \otimes v_i) + \nabla p_i =0\\ \\
\div v_i = 0\\ \\
v_i(\cdot,t_i)=v_{\ell}(\cdot, t_i)
\, .
\end{array}\right.
\label{e:vi:def}
\end{equation} 
defined over their own maximal interval of existence.
Next, recall the following

\begin{proposition}
\label{p:local:Euler}
For any $\alpha>0$ there exists a constant $c=c(\alpha)>0$ with the following property. Given any initial data $u_0\in C^{\infty}$, and $T\leq c\norm{u_0}_{1+\alpha}$, there exists a unique solution $u:\mathbb R^3 \times [-T,T]\rightarrow \mathbb R^3$  to the Euler equation
\begin{equation*}
\left\{\begin{array}{l}
\partial_t u+ \div (u\otimes u) + \nabla p = 0\\ \\
\div u = 0\, , \\ \\
u(\cdot, 0)=u_0
\end{array}\right.
\end{equation*} 
Moreover, $u$ obeys the bounds
\begin{align}
\norm{u}_{N+\alpha} \lesssim &\norm{u_0}_{N+\alpha}~. \label{e:euler_eq_bd_k}
\end{align}
for all $N\geq 1$, where the implicit constant depends on $N$ and $\alpha>0$.
\end{proposition}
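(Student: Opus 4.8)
The plan is to run a standard energy-method / Picard-iteration argument for the Euler equations in Hölder spaces, the only nonstandard point being that the time of existence and the higher-norm bounds must be expressed in terms of the $C^{1+\alpha}$ norm of the data alone (uniformly in the higher norms $\|u_0\|_{N+\alpha}$), which is exactly what is needed later for the stability estimates of glued solutions. First I would recall the classical local well-posedness in $C^{1+\alpha}$: given $u_0 \in C^{\infty} \subset C^{1+\alpha}$, there is a unique short-time smooth solution, and a standard a priori estimate gives $\tfrac{d}{dt}\|u(\cdot,t)\|_{1+\alpha} \lesssim \|u(\cdot,t)\|_{1+\alpha}^2$, where one uses that the pressure satisfies $-\Delta p = \div\div(u\otimes u)$ and Schauder/Calderón--Zygmund estimates to bound $\|\nabla p\|_{1+\alpha} \lesssim \|u\|_{1+\alpha}^2$, together with the fact that the transport term $u\cdot\nabla u$ loses no derivative in the Hölder scale beyond what $\|u\|_{1+\alpha}$ controls. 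Grönwall then yields that on $[-T,T]$ with $T \leq c \|u_0\|_{1+\alpha}^{-1}$ the norm $\|u(\cdot,t)\|_{1+\alpha}$ stays comparable to $\|u_0\|_{1+\alpha}$; uniqueness in this class is classical.

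For the higher-order bounds \eqref{e:euler_eq_bd_k} I would argue by induction on $N$. Differentiating the equation $N$ times (or applying the transport-equation commutator estimates), one gets along particle trajectories
\[
\frac{d}{dt}\|u(\cdot,t)\|_{N+\alpha} \lesssim \|u(\cdot,t)\|_{1+\alpha}\,\|u(\cdot,t)\|_{N+\alpha} + (\text{lower-order terms in }\|u\|_{1+\alpha},\dots,\|u\|_{N-1+\alpha}),
\]
where again the pressure contribution $\|\nabla p\|_{N+\alpha} \lesssim \|u\|_{1+\alpha}\|u\|_{N+\alpha} + \dots$ is handled by Calderón--Zygmund estimates applied to the second-derivative operator $\partial_i\partial_j(-\Delta)^{-1}$ acting on $u\otimes u$ and the Leibniz rule. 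Since on $[-T,T]$ we already know $\|u(\cdot,t)\|_{1+\alpha} \lesssim \|u_0\|_{1+\alpha}$ and $T\|u_0\|_{1+\alpha}\leq c$, the factor $\exp(\int_0^t \|u(\cdot,s)\|_{1+\alpha}\,ds)$ is bounded by a universal constant, so Grönwall applied to the linear-in-$\|u\|_{N+\alpha}$ inequality, combined with the inductive control of the lower-order terms, produces $\|u(\cdot,t)\|_{N+\alpha} \lesssim \|u_0\|_{N+\alpha}$ with a constant depending only on $N$ and $\alpha$.

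The main obstacle — really the only place where care is needed — is keeping the constant in \eqref{e:euler_eq_bd_k} independent of the size of the data, i.e.\ linear in $\|u_0\|_{N+\alpha}$ rather than polynomial, which forces one to be careful that every quadratic term is split as (something controlled by $\|u\|_{1+\alpha}$, hence by a universal constant after using the smallness $T\|u_0\|_{1+\alpha}\leq c$) times (a single top-order factor $\|u\|_{N+\alpha}$), plus genuinely lower-order contributions absorbed by the induction. Once the bookkeeping is set up this way the estimate closes. I would remark that this proposition is essentially classical — it appears in this or equivalent form in the earlier convex-integration literature (e.g.\ \cite{BuDLeIsSz2015,Is2016}) — so in the paper it can be invoked with only a brief indication of proof, the substance being the transport estimate in Hölder spaces plus Schauder estimates for the pressure, both of which are collected in the Appendix.
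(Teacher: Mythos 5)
Your proposal is correct and follows essentially the same route as the paper: existence and uniqueness are quoted as classical (Majda--Bertozzi), and the higher-order bound \eqref{e:euler_eq_bd_k} is obtained by commuting $\partial^\theta$ with the equation, estimating $\nabla \partial^\theta p$ via the pressure equation and Schauder, and closing with the transport estimate \eqref{e:trans_est_alpha} and Gr\"onwall, using $T\|u_0\|_{1+\alpha}\lesssim 1$ to make the exponential factor universal. The only cosmetic difference is that you organize the commutator terms as an induction on $N$, whereas the paper bounds $[\partial^\theta, v\cdot\nabla]v$ directly by $\|v\|_{1+\alpha}\|v\|_{N+\alpha}$ via interpolation, avoiding the induction.
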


\begin{proof}[Proof of Proposition~\ref{p:local:Euler}]
The proof of the existence of a unique solution is standard (see e.g.~\cite[Chapter 4]{MaBe2002}), and follows from the restriction $T \leq c \norm{u_0}_{1+\alpha}$. The higher-order bounds \eqref{e:euler_eq_bd_k} are also standard, and can be obtained as follows: For any multi-index $\theta$ with $|\theta|=N$ we have
\[
\partial_t\partial^{\theta}v+v\cdot\nabla\partial^\theta v+[\partial^\theta,v\cdot\nabla]v+\nabla\partial^\theta p=0.
\]
Using the equation for the pressure $-\Delta p=\nabla v\cdot  \nabla v$ and Schauder estimates  we obtain
\[
\|\nabla\partial^{\theta}p\|_{\alpha}\lesssim \|\nabla v\cdot \nabla v\|_{N-1+\alpha}\lesssim \|v\|_{1+\alpha}\|v\|_{N+\alpha}.
\]
Therefore 
\[
\|(\partial_t+v\cdot\nabla)\partial^\theta v\|_{\alpha}\lesssim  \|v\|_{1+\alpha}\|v\|_{N+\alpha},
\]
and \eqref{e:euler_eq_bd_k} follows by applying  \eqref{e:trans_est_alpha}  and Gr\"onwall's inequality.
\end{proof}

An immediate consequence is:
\begin{corollary}
\label{c:size:vi}
If $a$ is sufficiently large, for $\abs{t-t_i}\leq \tau_q$,  we have
\begin{equation}\label{e:z:N+alpha}
\norm{v_i}_{N+\alpha} \lesssim \delta_q^{\sfrac12}\lambda_q\ell^{1-N-\alpha}\lesssim \tau_q^{-1}\ell^{1-N+\alpha}\,\qquad
\mbox{for any $N\geq 1$}.
\end{equation}
\end{corollary}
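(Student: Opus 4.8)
The plan is to derive Corollary~\ref{c:size:vi} directly from Proposition~\ref{p:local:Euler} applied to the solution $v_i$ of \eqref{e:vi:def}, whose initial data at time $t_i$ is $v_\ell(\cdot,t_i)$. First I would check that the time of existence is long enough: by the CFL-type bound \eqref{e:CFL} we have $\tau_q \norm{v_\ell}_{1+\alpha} \lesssim \ell^\alpha \ll 1$, and since $\norm{v_\ell}_{1+\alpha} \lesssim \delta_q^{\sfrac12}\lambda_q \ell^{-\alpha}$ by \eqref{e:v:ell:k}, the constraint $T \leq c\norm{u_0}_{1+\alpha}^{-1}$ of Proposition~\ref{p:local:Euler} is met on the interval $\abs{t-t_i}\leq \tau_q$, provided $a$ is large enough. (Note: the statement of Proposition~\ref{p:local:Euler} as written has $T \leq c\norm{u_0}_{1+\alpha}$, but the intended reading — consistent with local existence theory and with \eqref{e:CFL} — is $T\leq c\norm{u_0}_{1+\alpha}^{-1}$; I would use that.)

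Next, for $N\geq 1$ the bound \eqref{e:euler_eq_bd_k} gives $\norm{v_i}_{N+\alpha} \lesssim \norm{v_\ell(\cdot,t_i)}_{N+\alpha}$, with implicit constant depending only on $N$ and $\alpha$. Then I would invoke the mollification estimate \eqref{e:v:ell:k} with $N$ replaced by $N-1$: $\norm{v_\ell}_{N+\alpha} \leq \norm{v_\ell}_{\ceil{N+\alpha}} \lesssim \norm{v_\ell}_{N+1} \lesssim \delta_q^{\sfrac12}\lambda_q \ell^{-(N-1)} = \delta_q^{\sfrac12}\lambda_q \ell^{1-N}$ — here one uses that $\alpha<1$ so that the $C^{N+\alpha}$ norm is controlled by the $C^{N+1}$ norm up to a constant (interpolation), and this costs only a factor depending on $N,\alpha$. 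Since $\ell \leq 1$ by \eqref{e:compare-lambda-ell}, we have $\ell^{1-N} \leq \ell^{1-N+\alpha} \cdot \ell^{-\alpha}$... actually more carefully $\ell^{1-N}\le \ell^{1-N-\alpha}$, giving the first inequality $\norm{v_i}_{N+\alpha}\lesssim \delta_q^{\sfrac12}\lambda_q \ell^{1-N-\alpha}$ as stated. Finally, the second inequality in \eqref{e:z:N+alpha} is just the definition \eqref{e:tau_def} of $\tau_q$: since $\tau_q^{-1} = \delta_q^{\sfrac12}\lambda_q \ell^{-2\alpha}$, we have $\delta_q^{\sfrac12}\lambda_q \ell^{1-N-\alpha} = \tau_q^{-1} \ell^{1-N+\alpha}$, so the two sides agree exactly (up to the harmless relabeling of the exponent $-\alpha$ versus $+\alpha$ absorbed by $\ell\le1$).

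I do not anticipate any genuine obstacle here — the corollary is a bookkeeping consequence of the propagation estimate \eqref{e:euler_eq_bd_k} together with the two parameter identities \eqref{e:compare-lambda-ell} and \eqref{e:tau_def}. The only point requiring a modicum of care is the passage between the fractional norm $C^{N+\alpha}$ appearing in Proposition~\ref{p:local:Euler} and the integer norm $C^{N+1}$ appearing in the mollification estimate \eqref{e:v:ell:k}; this is handled by the elementary interpolation inequality $\norm{f}_{N+\alpha}\lesssim \norm{f}_{N}^{1-\alpha}\norm{f}_{N+1}^{\alpha} \lesssim \norm{f}_{N+1}$ (valid since all these are controlled by $\norm{v_q}_1 \ell^{-\cdot}$ anyway), and in any case the mollification bounds in Proposition~\ref{p:est_mollification} could equally well be stated with fractional norms. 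The rest is substitution.
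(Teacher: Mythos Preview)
Your approach is exactly the paper's: apply Proposition~\ref{p:local:Euler} (with the typo you correctly identify) via the CFL condition \eqref{e:CFL}, then feed in the mollification estimate \eqref{e:v:ell:k}. One small slip to fix: invoking \eqref{e:v:ell:k} at level $N+1$ gives $\|v_\ell\|_{N+1}\lesssim \delta_q^{\sfrac12}\lambda_q\ell^{-N}$, not $\ell^{1-N}$, and since $\ell<1$ and $-N<1-N-\alpha$ one has $\ell^{-N}>\ell^{1-N-\alpha}$, so the crude inequality $\|\cdot\|_{N+\alpha}\le\|\cdot\|_{N+1}$ overshoots the target; you actually need the interpolation $\|v_\ell\|_{N+\alpha}\lesssim\|v_\ell\|_N^{1-\alpha}\|v_\ell\|_{N+1}^\alpha\lesssim\delta_q^{\sfrac12}\lambda_q\ell^{1-N-\alpha}$ that you already invoke in your final paragraph (or, equivalently, the mollification bound written directly with a fractional exponent).
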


\begin{proof}[Proof of Corollary~\ref{c:size:vi}]
We apply Proposition \ref{p:local:Euler} and use the estimate \eqref{e:CFL} to obtain
\[
\|v_i\|_{N+\alpha}\lesssim \|v(t_i)\|_{N+\alpha}
\]
for any $N\geq 1$. From \eqref{e:v:ell:k} we then deduce 
the estimate \eqref{e:z:N+alpha}.
\end{proof}

\subsection{Stability and estimates on \texorpdfstring{$v_i-v_\ell$}{vi-vl}}
We will now show that for $\abs{t_i-t}\leq \tau_q$, $v_i$ is close to $v_{\ell}$ and by the identity 
\begin{equation*}
v_i - v_{i+1} = (v_{i}-v_{\ell})-(v_{i+1}-v_{\ell}),
\end{equation*}
the vector field $v_i$ is also close to $v_{i+1}$.

\begin{proposition}
\label{p:vi:vell}
For $\abs{t-t_i}\leq \tau_q$ and $N\geq 0$ we have
\begin{align}
\norm{v_i-v_{\ell}}_{N+\alpha} \lesssim & \tau_q\delta_{q+1}\ell^{-N-1+\alpha}\,, \label{e:z_diff_k}\\
\norm{\nabla(p_{\ell} - p_i)}_{N+\alpha} &\lesssim \delta_{q+1}\ell^{-N-1+\alpha}\,, \label{e:pressure_1}\\
\norm{D_{t,\ell}(v_i-v_{\ell})}_{N+\alpha} &\lesssim  \delta_{q+1}\ell^{-N-1+\alpha}\,, \label{e:z_D_t}
\end{align}
where we write
\begin{align}
D_{t,\ell}=\partial_t+v_{\ell}\cdot\nabla
\label{e:Dtell:def}
\end{align}
for the transport derivative.
\end{proposition}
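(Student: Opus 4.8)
The plan is to estimate the difference $w_i := v_i - v_\ell$ by deriving an evolution equation for it along the flow of $v_\ell$, then bootstrap from the zeroth-order bound (which follows essentially from the smallness of $\mathring R_\ell$ together with the short time scale $\tau_q$) up to the higher-order estimates \eqref{e:z_diff_k}, and finally read off \eqref{e:pressure_1} and \eqref{e:z_D_t} as byproducts. Subtracting \eqref{e:euler_reynolds_l} from \eqref{e:vi:def}, the difference $w_i$ solves a transport equation of the schematic form
\[
D_{t,\ell} w_i + (w_i \cdot \nabla) v_i + \nabla(p_i - p_\ell) = - \div \mathring R_\ell\, ,
\qquad \div w_i = 0\, ,
\]
with $w_i(\cdot, t_i) = 0$. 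The pressure difference is recovered elliptically from $-\Delta (p_i - p_\ell) = \div\div(v_i\otimes v_i - v_\ell\otimes v_\ell) + \div\div \mathring R_\ell$, which after symmetrizing is controlled (via Schauder estimates, as in the proof of Proposition~\ref{p:local:Euler}) by $\|w_i\|_{N+\alpha}$ times the known $C^{1+\alpha}$ bounds on $v_i, v_\ell$ from \eqref{e:v:ell:k} and \eqref{e:z:N+alpha}, plus $\|\mathring R_\ell\|_{N+\alpha} \lesssim \delta_{q+1}\ell^{-N+\alpha}$ from \eqref{e:R:ell}.

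The key mechanism is the CFL inequality \eqref{e:CFL}: over a time interval of length $\tau_q$, the transport operator $D_{t,\ell}$ costs essentially nothing in Hölder norms because $\tau_q \|v_\ell\|_{1+\alpha} \lesssim \ell^\alpha \ll 1$, so the transport estimate \eqref{e:trans_est_alpha} (with Grönwall) lets us integrate the right-hand side in time with only a harmless multiplicative constant. First I would establish the $N=0$ case: applying $D_{t,\ell}$-transport estimates to $w_i$ and absorbing the $(w_i\cdot\nabla)v_i$ and pressure terms (which are lower order once $\|w_i\|_{1+\alpha}$ is small, using \eqref{e:z:N+alpha} and \eqref{e:CFL}), one gets $\|w_i\|_\alpha \lesssim \tau_q \|\div \mathring R_\ell\|_\alpha \lesssim \tau_q \delta_{q+1}\ell^{-1+\alpha}$, which is exactly \eqref{e:z_diff_k} for $N=0$; note this is $\lesssim \delta_{q+1}\ell^{2\alpha - 1}\cdot(\delta_q^{1/2}\lambda_q)^{-1}\cdot\delta_q^{1/2}\lambda_q = $ genuinely small, so the bootstrap closes. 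Then I would induct on $N$: differentiating the equation for $w_i$, the commutator $[\partial^\theta, v_\ell\cdot\nabla]w_i$ is handled by the standard commutator/transport estimates in the Appendix, the term $\partial^\theta((w_i\cdot\nabla)v_i)$ is expanded by Leibniz with the top derivative landing either on $w_i$ (controlled using the already-established lower-order bounds together with $\|v_i\|_{1+\alpha}$) or on $v_i$ (controlled by $\|w_i\|_\alpha \|v_i\|_{N+1+\alpha}$ via \eqref{e:z:N+alpha}), and the forcing $\|\div\mathring R_\ell\|_{N+\alpha}\lesssim \delta_{q+1}\ell^{-N-1+\alpha}$ again dominates after multiplying by $\tau_q$.

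The bounds \eqref{e:pressure_1} and \eqref{e:z_D_t} then follow immediately: \eqref{e:pressure_1} comes from feeding \eqref{e:z_diff_k} back into the elliptic estimate for $p_i - p_\ell$ described above — the worst term is $\|v_i\|_{1+\alpha}\|w_i\|_{N+\alpha} \lesssim \delta_q^{1/2}\lambda_q \cdot \tau_q \delta_{q+1}\ell^{-N-1+\alpha} = \delta_{q+1}\ell^{-N-1+3\alpha}$, absorbing the $\ell^{2\alpha}$ loss via \eqref{e:compare-lambda-ell} — and \eqref{e:z_D_t} is then just the equation for $w_i$ read as $D_{t,\ell}w_i = -(w_i\cdot\nabla)v_i - \nabla(p_i-p_\ell) - \div\mathring R_\ell$, each term on the right now bounded by $\delta_{q+1}\ell^{-N-1+\alpha}$. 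The main obstacle is purely bookkeeping: one must verify at every step that the accumulated powers of $\ell$ (and the factors $\tau_q \delta_q^{1/2}\lambda_q \lesssim \ell^\alpha$, $\lambda_q^{-3\alpha}\le \ell^{2\alpha}$) conspire so that all the "parasitic" terms coming from $v_i$'s own size are genuinely lower order than the forcing $\div \mathring R_\ell$, and that the Grönwall constant stays $O(1)$ on $|t-t_i|\le\tau_q$ — both of which are guaranteed precisely by \eqref{e:CFL} and \eqref{e:compare-lambda-ell}.
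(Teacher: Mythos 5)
Your proposal is correct and follows essentially the same route as the paper: subtract the two systems to get a transport equation for $v_i-v_\ell$ along $D_{t,\ell}$, recover $\nabla(p_i-p_\ell)$ elliptically via Schauder/Calder\'on--Zygmund in terms of $\|v_i-v_\ell\|$ and $\|\mathring R_\ell\|$, close the $N=0$ case by \eqref{e:trans_est_alpha} plus Gr\"onwall on $|t-t_i|\leq\tau_q$ using the CFL condition, and then induct on $N$ by commuting $\partial^\theta$ with $D_{t,\ell}$ and interpolating, with \eqref{e:pressure_1} and \eqref{e:z_D_t} read off from the established bounds. The only cosmetic difference is that the term $(v_i-v_\ell)\cdot\nabla v_i$ is not ``lower order'' but contributes exactly $\tau_q^{-1}\|v_i-v_\ell\|_{N+\alpha}$, which is precisely what Gr\"onwall absorbs over an interval of length $\tau_q$ — as you in fact note.
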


\begin{proof}[Proof of Proposition~\ref{p:vi:vell}]
Let us first consider \eqref{e:z_diff_k} with $N=0$.
From \eqref{e:euler_reynolds_l} and \eqref{e:vi:def} we have
\begin{equation}\label{e:z_diff_evo}
\partial_t (v_{\ell} - v_i) + (v_{\ell} \cdot \nabla) (v_{\ell} - v_i)
= (v_i - v_{\ell}) \cdot \nabla v_i - \nabla (p_{\ell} - p_i)+\div \mathring{R}_{\ell}.
\end{equation}
In particular, using
\begin{equation}\label{e:eqnpi}
\Delta (p_{\ell} - p_i) = \div\bigl(\nabla v_\ell(v_\ell-v_i)\bigr)+\div\bigl(\nabla v_i(v_\ell-v_i)\bigr)+\div\div\mathring{R}_{\ell},
\end{equation}
estimates \eqref{e:R:ell} and \eqref{e:z:N+alpha}, and Proposition~\ref{p:CZO_C_alpha} (recall that $\partial_{i} \partial_j (-\Delta)^{-1}$ is given by $\sfrac 13 \delta_{ij} ~+ $ a Calder{\'o}n-Zygmund operator), we conclude
\begin{equation*}
\norm{\nabla (p_{\ell} - p_i) (\cdot, t)}_{\alpha} \leq  \delta_q^{\sfrac12}\lambda_q\ell^{-\alpha}\norm{v_{i} - v_\ell}_{\alpha}+\delta_{q+1}\ell^{-1+\alpha}\,.
\end{equation*}
Thus, using \eqref{e:R:ell} and the definition of $\tau_q$, we have
\begin{equation}\label{e:z_D_t_step}
\norm{D_{t,\ell} (v_{\ell} - v_i) }_{\alpha}\lesssim \delta_{q+1}\ell^{-1+\alpha}+\tau_q^{-1} \norm{v_{\ell} - v_i}_{\alpha}
\end{equation}
By applying  \eqref{e:trans_est_alpha} we obtain
\begin{align*}
\norm{ (v_{\ell} - v_i) (\cdot, t) }_{\alpha} \lesssim  \abs{t-t_i}\delta_{q+1}\ell^{-1+\alpha}+ \int_{t_i}^t\tau_q^{-1} \norm{(v_{\ell} - v_i)(\cdot,s)}_{\alpha} ~ds.
\end{align*}
Applying Gr\"onwall's inequality and using the assumption $\abs{t-t_i}\leq \tau_q$  we obtain 
\begin{equation}\label{e:zdiff:N=0}
\norm{v_i-v_{\ell}}_{\alpha} \lesssim \tau_q\delta_{q+1}\ell^{-1+\alpha}\,,
\end{equation}
i.e. \eqref{e:z_diff_k} for the case $N=0$. Then, as a consequence of \eqref{e:z_D_t_step} we obtain \eqref{e:z_D_t} for the case $N=0$.

Next, consider the case $N\geq 1$ and let $\theta$ be a multiindex with $|\theta|=N$. 
Commuting the derivative $\partial^\theta$ with the material derivative $\partial_t + v_{\ell} \cdot \nabla$ we have
\begin{align*}
\|D_{t,\ell}\partial^\theta (v_\ell-v_i)\|_{\alpha}&\lesssim \|\partial^\theta D_{t,\ell} (v_\ell-v_i)\|_{\alpha}+\|[v_\ell\cdot\nabla,\partial^\theta](v_\ell-v_i)\|_\alpha\\
&\lesssim  \|\partial^\theta D_{t,\ell} (v_\ell-v_i)\|_{\alpha}+\|v_\ell\|_{N+\alpha}\|v_\ell-v_i\|_{1+\alpha}+\|v_\ell\|_{1+\alpha}\|v_\ell-v_i\|_{N+\alpha}\\
&\lesssim \|\partial^\theta D_{t,\ell} (v_\ell-v_i)\|_{\alpha}+\|v_\ell\|_{N+1+\alpha}\|v_\ell-v_i\|_{\alpha}+\|v_\ell\|_{1+\alpha}\|v_\ell-v_i\|_{N+\alpha}\,,
\end{align*}
where in the last inequality we used the standard interpolation inequalities on H\"older norms, cf. \eqref{e:Holderinterpolation}. On the other hand
differentiating \eqref{e:z_diff_evo} leads to
\begin{align}
\|\partial^\theta D_{t,\ell} (v_\ell-v_i)\|_{\alpha}&\lesssim \|v_\ell-v_i\|_{N+\alpha}\|v_i\|_{1+\alpha}+\|v_\ell-v_i\|_{\alpha}\|v_i\|_{N+1+\alpha}+\|p_\ell-p_i\|_{N+1+\alpha}+\|\mathring{R}_\ell\|_{N+1+\alpha}\notag\\
&\lesssim \tau_q^{-1} \|v_\ell-v_i\|_{N+\alpha}+\delta_{q+1}\ell^{-N-1+\alpha}+\|\nabla(p_\ell-p_i)\|_{N+\alpha}\label{e:dbetaDt}\,,
\end{align}
where we have used \eqref{e:zdiff:N=0}.
Furthermore, from \eqref{e:eqnpi} we also obtain, using Corollary \ref{c:size:vi} and \eqref{e:zdiff:N=0}
\begin{align}
\|\nabla(p_\ell-p_i)\|_{N+\alpha}&\lesssim (\|v_\ell\|_{N+1+\alpha}+\|v_i\|_{N+1+\alpha})\|v_\ell-v_i\|_{\alpha} \notag\\
&\qquad +(\|v_\ell\|_{1+\alpha}+\|v_i\|_{1+\alpha})\|v_\ell-v_i\|_{N+\alpha}+\|\mathring{R}_\ell\|_{N+1+\alpha}\notag\\
&\lesssim \delta_{q+1}\ell^{-N-1+\alpha}+\tau_q^{-1}\|v_\ell-v_i\|_{N+\alpha}\label{e:pressureNN}\,.
\end{align}
Summarizing, for any multiindex $\theta$ with $|\theta|=N$ we obtain
$$
\|D_{t,\ell}\partial^\theta (v_\ell-v_i)\|_{\alpha}\lesssim \delta_{q+1}\ell^{-N-1+\alpha}+\tau_q^{-1}\|v_\ell-v_i\|_{N+\alpha}.
$$
Therefore, invoking once more  \eqref{e:trans_est_alpha} we deduce 
\begin{equation*}
  \|(v_\ell-v_i)(\cdot,t)\|_{N+\alpha}\lesssim \tau_q\delta_{q+1}\ell^{-N-1+\alpha}+\int_{t_i}^t\tau_q^{-1}\|(v_\ell-v_i)(\cdot,s)\|_{N+\alpha}\,ds	,
\end{equation*}
and hence, using Gr\"onwall's inequality and the assumption $\abs{t-t_i}\leq \tau_q$  we obtain \eqref{e:z_diff_k}. From  \eqref{e:pressureNN} and \eqref{e:dbetaDt} we then also conclude \eqref{e:pressure_1} and \eqref{e:z_D_t}.
\end{proof}

\subsection{Estimates on vector potentials}
Define the vector potentials to the solutions $v_i$ as
\begin{equation}\label{e:ziBidef}
z_i=\RRc v_i:=(-\Delta)^{-1}\curl v_i,
\end{equation}
where $\RRc$ is the Biot-Savart operator, so that
\begin{equation}\label{e:Biot_Savart}
\div z_i=0\qquad\textrm{ and }\qquad\curl z_i=v_i.
\end{equation}
Our aim is to obtain estimates for the differences $z_i-z_{i+1}$. The heuristic is as follows: from Proposition \ref{p:vi:vell} we
obtain
\[
\|v_i-v_{i+1}\|_{N+\alpha}\lesssim \tau_q\delta_{q+1}\ell^{-N-1+\alpha}.
\]
Since the characteristic length-scale of the vectorfields $v_i$ is $\ell$ (cf.~Corollary \ref{c:size:vi}), we expect to gain a factor $\ell$ when passing to first order potentials. This is formalized  in Proposition~\ref{p:S_est} below.

\begin{proposition}\label{p:S_est}
For $\abs{t-t_i} \leq \tau_q$, we have that
\begin{align}
\norm{z_i-z_{i+1}}_{N+\alpha} &\lesssim  \tau_q\delta_{q+1}\ell^{-N+\alpha}\,,   \label{e:z_diff} \\
\norm{D_{t,\ell} (z_i-z_{i+1})}_{N+\alpha} &\lesssim  \delta_{q+1}\ell^{-N+\alpha}\,, \label{e:z_diff_Dt}
\end{align}
where $D_{t,\ell}$ is as in \eqref{e:Dtell:def}.
\end{proposition}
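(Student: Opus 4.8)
The strategy is to transfer the estimates of Proposition~\ref{p:vi:vell} for $v_i - v_{i+1}$ to the vector potentials $z_i - z_{i+1}$, exploiting the gain of one derivative afforded by the Biot--Savart operator $\RRc = (-\Delta)^{-1}\curl$. The naive bound $\|z_i - z_{i+1}\|_{N+\alpha} \lesssim \|v_i - v_{i+1}\|_{N-1+\alpha}$ would already give \eqref{e:z_diff}; however $\RRc$ is only a Calder\'on--Zygmund-type operator of order $-1$, so one cannot lose a derivative for free and must instead use the characteristic length scale $\ell$ of the $v_i$. The first step is therefore to quantify this: writing $v_i - v_{i+1} = (v_i - v_\ell) - (v_{i+1} - v_\ell)$, Proposition~\ref{p:vi:vell} gives $\|v_i - v_{i+1}\|_{N+\alpha} \lesssim \tau_q \delta_{q+1}\ell^{-N-1+\alpha}$ for all $N \geq 0$. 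One then applies the boundedness of $\RRc$ on H\"older spaces together with a Bernstein/Littlewood--Paley-type argument — or, in the language of this paper, Proposition~\ref{p:com:CZ:multiplication} and the negative-order operator estimates cited in Section~\ref{s:gluing_outline} — to conclude that the extra factor one expects to gain is indeed $\ell$ rather than a lost derivative, yielding \eqref{e:z_diff}.

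For the advective derivative estimate \eqref{e:z_diff_Dt}, the plan is to commute $D_{t,\ell} = \partial_t + v_\ell\cdot\nabla$ through the operator $\RRc$. Since $\curl z_i = v_i$ and $\div z_i = 0$, one has $z_i = \RRc v_i$, and the evolution equation for $v_i - v_{i+1}$ obtained from \eqref{e:z_diff_evo} (applied twice and subtracted, or directly from \eqref{e:vi:def}) gives an expression for $D_{t,\ell}(v_i - v_{i+1})$. Applying $\RRc$ to this equation produces $D_{t,\ell}(z_i - z_{i+1})$ up to the commutator $[\RRc, v_\ell\cdot\nabla](v_i - v_{i+1})$. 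The main work is to bound this commutator: here one invokes precisely the commutator estimates on advective derivatives of differential operators of negative order that the authors advertise in Section~\ref{s:gluing_outline} (cf. the forward references to Proposition~\ref{p:S_est} itself and Proposition~\ref{p:com:CZ:multiplication}). The commutator $[\RRc, v_\ell \cdot \nabla]$ is, schematically, an operator of order $0$ with a gain of $\ell$ coming from the smoothness scale of $v_\ell$ (whose derivatives are controlled by \eqref{e:v:ell:k}), so it contributes a term of size $\|v_\ell\|_{1+\alpha}\|z_i - z_{i+1}\|_{N+\alpha} \lesssim \tau_q^{-1}\cdot \tau_q\delta_{q+1}\ell^{-N+\alpha} = \delta_{q+1}\ell^{-N+\alpha}$, which is exactly the desired bound. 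The remaining contributions — $\RRc$ applied to $D_{t,\ell}(v_i - v_{i+1})$, which by \eqref{e:z_D_t} is of size $\delta_{q+1}\ell^{-N-1+\alpha}$ before applying $\RRc$ and hence $\delta_{q+1}\ell^{-N+\alpha}$ after — match as well.

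To organize the computation cleanly I would: (i) first record the bounds on $v_i - v_{i+1}$ and $D_{t,\ell}(v_i - v_{i+1})$ from Proposition~\ref{p:vi:vell}; (ii) prove \eqref{e:z_diff} directly by applying the boundedness of $\RRc$ together with the length-scale heuristic, making the gain of $\ell$ rigorous via the relevant Calder\'on--Zygmund estimate in the Appendix; (iii) write $D_{t,\ell}(z_i - z_{i+1}) = \RRc D_{t,\ell}(v_i - v_{i+1}) + [D_{t,\ell}, \RRc](v_i - v_{i+1})$ and bound each piece, the first by (i) and the boundedness of $\RRc$, the second by the commutator estimate; (iv) assemble. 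The main obstacle is step (iii)'s commutator estimate: controlling $[v_\ell \cdot \nabla, \RRc]$ acting on a function with characteristic frequency $\ell^{-1}$ requires more than the crude product rule — one needs the structure of $\RRc$ as a smoothing operator and the paraproduct-type decomposition to extract the gain of $\ell$, which is precisely the technical improvement over \cite{Is2016} the authors highlight. Everything else reduces to interpolation on H\"older norms (via \eqref{e:Holderinterpolation}) and bookkeeping with the parameter inequalities \eqref{e:compare-lambda-ell} and \eqref{e:CFL}.
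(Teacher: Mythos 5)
There is a genuine gap at the heart of your steps (ii) and (iii): the ``gain of a factor $\ell$'' when applying $\RRc=(-\Delta)^{-1}\curl$ cannot be extracted from boundedness of Calder\'on--Zygmund operators plus a Bernstein/Littlewood--Paley argument, and no such estimate appears in the Appendix. The bounds $\|v_i-v_\ell\|_{N+\alpha}\lesssim \tau_q\delta_{q+1}\ell^{-N-1+\alpha}$ say that each derivative costs $\ell^{-1}$, but they do not say that $v_i-v_\ell$ is spectrally localized at frequencies $\gtrsim\ell^{-1}$: a low-frequency component of size $\tau_q\delta_{q+1}\ell^{-1+\alpha}$ is perfectly consistent with them, and on such a component $\RRc$ gains nothing. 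Showing that the low frequencies are in fact small is essentially equivalent to proving $\|\tilde z_i\|_\alpha\lesssim\tau_q\delta_{q+1}\ell^\alpha$, so the harmonic-analysis route is circular. The same issue reappears in your step (iii): $\RRc\, D_{t,\ell}(v_i-v_\ell)$ is only of size $\delta_{q+1}\ell^{-N-1+\alpha}$ by \eqref{e:z_D_t} and order-zero boundedness, one power of $\ell$ short of \eqref{e:z_diff_Dt}; moreover Proposition \ref{p:com:CZ:multiplication} is stated for order-zero operators $T_K$ (in the paper it is used for $\RR\curl$ in the proof of Proposition \ref{p:Rq}), not for the order $-1$ operator $\RRc$, so it does not directly control $[v_\ell\cdot\nabla,\RRc]$.

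What the paper actually does is different in precisely this respect. Only the $N=0$ case is nontrivial: for $N\geq1$ one writes $\|\nabla\tilde z_i\|_{N-1+\alpha}\lesssim\|v_i-v_\ell\|_{N-1+\alpha}$ using that $\nabla\RRc$ is order zero, and \eqref{e:z_diff_k} one order down already gives the claim. For $N=0$ the factor $\tau_q$ (equivalently, the gain of $\ell$) is produced by a transport argument for the potential itself, not by mapping properties of $\RRc$: setting $\tilde z_i=\RRc(v_i-v_\ell)$, one rewrites the advection terms in the equation for $v_i-v_\ell$ via the identities $v_\ell\cdot\nabla(v_i-v_\ell)=\curl\bigl((v_\ell\cdot\nabla)\tilde z_i\bigr)+\div\bigl((\tilde z_i\times\nabla)v_\ell\bigr)$ and $((v_i-v_\ell)\cdot\nabla)v_i=\div\bigl((\tilde z_i\times\nabla)v_i^T\bigr)$, takes a curl to eliminate the pressure, and inverts the Laplacian to obtain, by Schauder, the closed bound $\|D_{t,\ell}\tilde z_i\|_{\alpha}\lesssim\tau_q^{-1}\|\tilde z_i\|_{\alpha}+\delta_{q+1}\ell^{\alpha}$. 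Since $\tilde z_i(\cdot,t_i)=0$ (because $v_i(\cdot,t_i)=v_\ell(\cdot,t_i)$), integrating over $|t-t_i|\leq\tau_q$ with \eqref{e:trans_est_alpha} and Gr\"onwall yields $\|\tilde z_i\|_\alpha\lesssim\tau_q\delta_{q+1}\ell^{\alpha}$, and reinserting this into the same elliptic estimate (after commuting spatial derivatives with $D_{t,\ell}$ as in Proposition \ref{p:vi:vell}) gives \eqref{e:z_diff_Dt}. Your proposal never uses the initial condition $v_i(\cdot,t_i)=v_\ell(\cdot,t_i)$ or a Gr\"onwall argument at the level of the potential, and without that ingredient the crucial factor $\tau_q$ cannot be obtained.
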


\begin{proof}[Proof of Proposition~\ref{p:S_est}]
Set $\tilde z_i := \RRc (v_i-v_{\ell})$ and observ that
$z_{i}-z_{i+1}=\tilde z_i-\tilde z_{i+1}$. Hence, it suffices to estimate $\tilde z_i$ in place of $z_i-z_{i+1}$.

The estimate on $\norm{\nabla \tilde z_i}_{N-1+\alpha}$ for $N\geq 1$ follows directly from \eqref{e:z_diff_k} and the fact that $\nabla \RRc$ is a bounded operator on H\"older spaces:
\begin{align}\label{e:N>=1}
\norm{\nabla \tilde z_i}_{N-1+\alpha}=& \norm{\nabla \RRc (v_i-v_{\ell})}_{N-1+\alpha} \norm{v_i-v_{\ell}}_{N+\alpha}
\lesssim  \tau_q\delta_{q+1}\ell^{-N+\alpha} ~.
\end{align}
Next, observe that 
\begin{equation}\label{e:eqn-vivell}
\partial_t(v_i-v_\ell)+v_\ell\cdot\nabla(v_i-v_\ell)+(v_i-v_\ell)\cdot\nabla v_i+\nabla (p_i-p_\ell)+\div\mathring{R}_\ell=0.	
\end{equation}
Since $v_i-v_\ell=\curl\tilde z_i$ with $\div\tilde z_i=0$, 
we have\footnote{Here we use the notation $[(z\times\nabla)v]^{ij}=\epsilon_{ikl} z^k\partial_lv^j$ for vector fields $z,v$.}
\begin{align*}
 v_\ell\cdot\nabla(v_i-v_{\ell}) &=\curl\bigl((v_\ell\cdot\nabla)\tilde z_i\bigr)+\div\bigl((\tilde z_i\times \nabla)v_\ell\bigr)\\
	((v_i-v_\ell)\cdot\nabla) v_i &=\div\bigl((\tilde z_i\times \nabla)v_i^T\bigr),
\end{align*}
so that we can write \eqref{e:eqn-vivell} as
\begin{equation}\label{e:curlz_i-eqn}
\curl(\partial_t\tilde z_i+(v_\ell\cdot\nabla)\tilde z_i)=-\div\bigl((\tilde z_i\times\nabla)v_\ell+(\tilde z_i\times\nabla)v_i^T\bigr)-\nabla(p_i-p_\ell)-\div\mathring{R}_\ell.	
\end{equation}
Taking the curl of \eqref{e:curlz_i-eqn} the pressure term drops out. Using in addition that $\div\tilde z_i=\div v_i=0$ and the identity 
$\curl\curl =-\Delta+\nabla\div$, we then arrive at
\begin{equation*}
	-\Delta\bigl(\partial_t\tilde z_i+(v_\ell\cdot\nabla)\tilde z_i\bigr)=F,
\end{equation*}
where 
$$
F=-\nabla\div\left((\tilde z_i\cdot \nabla)v_\ell\right)-\curl\div\left((\tilde z_i\times\nabla)v_\ell+(\tilde z_i\times\nabla)v_i^T\right)-\curl\div\mathring{R}_\ell.
$$
Consequently, 
\begin{align}
	\|\partial_t\tilde z_i+(v_\ell\cdot\nabla)\tilde z_i\|_{N+\alpha} \lesssim \; &(\|v_i\|_{N+1+\alpha}+\|v_\ell\|_{N+1+\alpha})\|\tilde z_i\|_\alpha\notag\\
	&+(\|v_i\|_{1+\alpha}+\|v_\ell\|_{1+\alpha})\|\tilde z_i\|_{N+\alpha}+\|\mathring{R}_\ell\|_{N+\alpha}\notag\\
	\lesssim\; & \tau_q^{-1}\|\tilde z_i\|_{N+\alpha}+\tau_q^{-1}\ell^{-N}\|\tilde z_i\|_\alpha+\delta_{q+1}\ell^{-N+\alpha}.
	\label{e:not_yet_commuted}
\end{align}
Setting $N=0$ and using \eqref{e:trans_est_alpha} and Gr\"onwall's inequality we obtain
$$
\|\tilde z_i\|_{\alpha}\lesssim \tau_q \delta_{q+1}\ell^{\alpha}\, ,
$$
which together with \eqref{e:N>=1} gives \eqref{e:z_diff}. 
Using \eqref{e:z_diff} into \eqref{e:not_yet_commuted} we conclude
$$
\|\partial_t\tilde z_i+(v_\ell\cdot\nabla)\tilde z_i\|_{N+\alpha} \lesssim \delta_{q+1}\ell^{-N+\alpha}\, .
$$
Finally commuting the derivatives in the $N+\alpha$-norm with $D_{t,\ell}$ as in the proof of Proposition \ref{p:vi:vell} and
using again \eqref{e:z_diff} we achieve \eqref{e:z_diff_Dt}.
\end{proof}


\section{Gluing procedure}\label{s:gluing}

Now we proceed to glue the solutions $v_i$ together in order to construct $\overline v_q$. The stability estimates above will be used in order to ensure that $\overline v_q$ remains an approximate solution to the Euler equations. 

\subsection{Partition of unity and definition of \texorpdfstring{$\overline{v}_q$}{bar vq}}
Let
$$
t_i=i\tau_q,\qquad I_i=[t_i+\tfrac{1}{3}\tau_q,t_i+\tfrac{2}{3}\tau_i] \cap [0,T] ,\qquad J_i=(t_i-\tfrac{1}{3}\tau_q,t_i+\tfrac{1}{3}\tau_q) \cap [0,T] \,.
$$
Note that $\{I_i,J_i\}_i$ is a decomposition of $[0,T]$ into pairwise disjoint intervals.  
We define a partition of unity $\{\chi_i\}_i$ in time with the following properties:
\begin{itemize}
\item The cut-offs form a partition of unity
\begin{equation}
\sum_i \chi_i \equiv 1
\label{e:chi:partition}
\end{equation}
\item $\supp \chi_i\cap \supp \chi_{i+2}=\emptyset$ and moreover
\begin{equation}\label{e:chi:time:width}
\begin{split}
\supp \chi_i&\subset (t_i-\tfrac 23 \tau_q,t_i+\tfrac 23 \tau_q)\\
\chi_i(t)&=1\quad\textrm{ for }t\in J_i
\end{split}
\end{equation}
\item For any $i$ and $N$ we have
\begin{equation}
\norm{\partial_t^N \chi_i}_0 \lesssim \tau_q^{-N} \label{e:dt:chi}\,.
\end{equation}
\end{itemize}


We define
\begin{align*}
\overline v_q&=\sum_i \chi_i v_i\\
\overline p_q^{(1)}&=\sum_i \chi_i p_i
\end{align*}
Observe that $\div \overline v_q=0$. Furthermore, if $t\in I_i$, then $\chi_i+\chi_{i+1}=1$ and $\chi_j=0$ for $j\neq i,i+1$, therefore
on $I_i$:
\begin{align*}
\overline v_q&=\chi_i v_i+(1-\chi_i) v_{i+1}\\
\overline p_q^{(1)}&=\chi_i p_i+(1-\chi_i) p_{i+1}
\end{align*}
and
\begin{align*}
\partial_t\overline v_q+\div(\overline v_q\otimes \overline v_q)+\nabla\overline p_q^{(1)}
&= \chi_i\partial_t v_i+(1-\chi_i)\partial_t v_{i+1}+\partial_t\chi_i(v_i-v_{i+1})\\
&\quad +\div\left(\chi_i^2v_i\otimes v_i+(1-\chi_i)^2v_{i+1}\otimes v_{i+1}\right)\\
&\quad +\chi_i(1-\chi_i)\div(v_i\otimes v_{i+1}+v_{i+1}\otimes v_{i}))\\
&\quad +\chi_i\nabla p_i+(1-\chi_i)\nabla p_{i+1}\\
&=\partial_t\chi_i(v_i-v_{i+1})-\chi_i(1-\chi_i)\div\left((v_i-v_{i+1})\otimes (v_i-v_{i+1})\right).
\end{align*}
On the other hand, if $t\in J_i$ then $\chi_i=1$ and $\chi_j=0$ for all $j\neq i$ for all $\tilde t$ sufficiently close to $t$ (since $J_i$ is open). Then for all $t\in J_i$ we have
$$
\overline v_q=v_i,\quad \overline p_q^{(1)}=p_i, 
$$
and, from \eqref{e:vi:def}, 
$$
\partial_t\overline v_q+\div(\overline v_q\otimes \overline v_q)+\nabla\overline p_q^{(1)}=0.
$$
\subsection{The new Reynods tensor}
In order to define the new Reynolds tensor, we recall the operator $\mathcal R$ from \cite{DlSz2013}, which
can be thought of as an ``inverse divergence'' operator for symmetric tracefree 2-tensors. The operator is defined as
\begin{equation}
\label{e:R:def}
\begin{split}
({\mathcal R} f)^{ij} &= {\mathcal R}^{ijk} f^k \\
{\mathcal R}^{ijk} &= - \frac 12 \Delta^{-2} \partial_i \partial_j \partial_k + \frac 12 \Delta^{-1} \partial_k \delta_{ij} -  \Delta^{-1} \partial_i \delta_{jk} -  \Delta^{-1} \partial_j \delta_{ik}.
\end{split}
\end{equation}
when acting on vectors $f$ with zero mean on $\T^3$. The following statement, taken from \cite{DlSz2013}, can be proved by direct calculation.

\begin{proposition}\label{p:R}
The tensor ${\mathcal R}$ defined in \eqref{e:R:def} is symmetric, and  we have
\[
\div ( {\mathcal R}  f) = f
\]
for any $f$ with zero mean on $\T^3$.
\end{proposition}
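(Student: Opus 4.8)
The plan is to verify both assertions --- that $\mathcal R$ maps into symmetric tensors, and that it inverts the divergence --- directly from the explicit formula \eqref{e:R:def}, using nothing beyond elementary identities for the operators $\Delta^{-1},\Delta^{-2}$ on $\T^3$.

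\emph{Symmetry.} One reads off \eqref{e:R:def} that each of the four summands defining $\mathcal R^{ijk}$ is invariant under the transposition $i\leftrightarrow j$: the terms $-\tfrac12\Delta^{-2}\partial_i\partial_j\partial_k$ and $\tfrac12\Delta^{-1}\partial_k\delta_{ij}$ are manifestly symmetric in $(i,j)$, while the remaining two terms $-\Delta^{-1}\partial_i\delta_{jk}-\Delta^{-1}\partial_j\delta_{ik}$ form a pair that is symmetrized by construction. Hence $\mathcal R^{ijk}=\mathcal R^{jik}$, and therefore $(\mathcal R f)^{ij}=\mathcal R^{ijk}f^k=\mathcal R^{jik}f^k=(\mathcal R f)^{ji}$ for every $f$.

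\emph{Inverse divergence.} Since $\div(\mathcal R f)^j=\partial_i(\mathcal R f)^{ij}=(\partial_i\mathcal R^{ijk})\,f^k$, it suffices to show $\partial_i\mathcal R^{ijk}=\delta_{jk}$ as operators on mean-zero functions. I would differentiate the four summands one at a time, using $\partial_i\partial_i=\Delta$, the commutation of $\Delta^{-1}$ and $\Delta^{-2}$ with partial derivatives, the contractions $\partial_i\delta_{ij}=\partial_j$ and $\partial_i\delta_{ik}=\partial_k$, and --- crucially --- the identities $\Delta^{-1}\Delta=\Id$ and $\Delta^{-2}\Delta=\Delta^{-1}$, valid precisely on functions of zero mean on $\T^3$ (this is where the hypothesis $\int_{\T^3}f=0$ enters, ensuring each $f^k$ lies in the subspace on which these cancellations hold). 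The three summands $-\tfrac12\Delta^{-2}\partial_i\partial_j\partial_k$, $\tfrac12\Delta^{-1}\partial_k\delta_{ij}$ and $-\Delta^{-1}\partial_j\delta_{ik}$ each contribute a multiple of $\Delta^{-1}\partial_j\partial_k$, with coefficients $\tfrac12,\tfrac12,-1$ that sum to zero, while the remaining summand $-\Delta^{-1}\partial_i\delta_{jk}$ yields $-\Delta^{-1}\Delta\,\delta_{jk}=\delta_{jk}$. Collecting the terms gives $\partial_i\mathcal R^{ijk}=\delta_{jk}$, hence $\div(\mathcal R f)=f$.

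\emph{Where the care is needed.} There is no real obstacle; the computation is entirely routine. The only point demanding attention is bookkeeping of signs and normalizations of $\Delta^{-1}$ and $\Delta^{-2}$ on $\T^3$ --- fixing the convention so that $\Delta^{-1}\Delta$ restricts to the identity on the mean-zero subspace --- and, correspondingly, keeping track of the Laplacian factors generated by the contractions $\partial_i\partial_i$. Once these conventions are pinned down, matching the coefficients in \eqref{e:R:def} against the required cancellation of the $\Delta^{-1}\partial_j\partial_k$ terms is immediate.
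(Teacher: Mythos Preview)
Your approach---a direct componentwise verification---is exactly what the paper intends (it simply says the claim ``can be proved by direct calculation'' and cites \cite{DlSz2013}), so there is no methodological issue.

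However, your execution contains two sign errors. First, applying $\partial_i$ to the summand $-\tfrac12\Delta^{-2}\partial_i\partial_j\partial_k$ and contracting gives $-\tfrac12\Delta^{-2}\Delta\,\partial_j\partial_k=-\tfrac12\Delta^{-1}\partial_j\partial_k$, so the coefficient is $-\tfrac12$, not $+\tfrac12$. Second, in the line ``$-\Delta^{-1}\Delta\,\delta_{jk}=\delta_{jk}$'' you have dropped a sign: since $\Delta^{-1}\Delta=\Id$ on mean-zero functions, one gets $-\delta_{jk}$. With these corrections the honest outcome is
\[
\partial_i\mathcal R^{ijk}=\Bigl(-\tfrac12+\tfrac12-1\Bigr)\Delta^{-1}\partial_j\partial_k-\delta_{jk}=-\Delta^{-1}\partial_j\partial_k-\delta_{jk},
\]
which is \emph{not} $\delta_{jk}$. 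In other words, the formula \eqref{e:R:def} as printed carries sign typos: the operator actually used (and defined in \cite{DlSz2013}) has the opposite signs on the last three terms,
\[
\mathcal R^{ijk}=-\tfrac12\Delta^{-2}\partial_i\partial_j\partial_k-\tfrac12\Delta^{-1}\partial_k\delta_{ij}+\Delta^{-1}\partial_i\delta_{jk}+\Delta^{-1}\partial_j\delta_{ik},
\]
and with this your computation goes through verbatim (coefficients $-\tfrac12,-\tfrac12,+1$ summing to zero, and $+\delta_{jk}$ from the remaining term). So the strategy is fine, but you should present the calculation with the correct signs and flag the misprint in \eqref{e:R:def}.
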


We define
\begin{align*}
\mathring{\overline{R}}_q&=\partial_t\chi_i\mathcal{R}(v_i-v_{i+1})-\chi_i(1-\chi_i)(v_i-v_{i+1})\mathring{\otimes} (v_i-v_{i+1})\\
\overline{p}_q^{(2)}&=-\chi_i(1-\chi_i)|v_i-v_{i+1}|^2,
\end{align*}
for $t\in I_i$ and $\mathring{\overline{R}}_q=0$, $\overline{p}_q^{(2)}=0$ for $t\notin\bigcup_{i}I_i$. Furthermore, we set
$$
\overline{p}_q=\overline{p}_q^{(1)}+\overline{p}_q^{(2)}
$$
It follows from the preceding discussion and Proposition \ref{p:R} that
\begin{itemize}
\item $\mathring{\overline{R}}_q$ is a smooth symmetric and traceless 2-tensor;
\item For all $(x,t)\in \T^3\times [0,T]$
\begin{equation*}
\left\{\begin{array}{l}
\partial_t\overline{v}_q+\div(\overline{v}_q\otimes\overline{v}_q)+\nabla \overline{p}_q =\div \mathring{\overline{R}}_q,\\ \\
\div \overline{v}_q =0;
\end{array}\right.
\end{equation*}
\item $\supp\mathring{\overline{R}}_q\subset \T^3\times \bigcup_iI_i$.
\end{itemize}

\subsection{Estimates on \texorpdfstring{$\overline{v}_q$}{overline vq}}
Next, we estimate the various H\"older norms of $\overline{v}_q$ and $\mathring{\overline{R}}_q$ in order to obtain 
\eqref{e:overline_v}-\eqref{e:Dt:overline_R}. 

\begin{proposition}\label{p:vq:vell}
The velocity field $\overline v_q$ satisfies the following estimates
\begin{align}
\norm{\bar v_q - v_{\ell}}_{\alpha} &\lesssim \delta_{q+1}^{\sfrac12}\ell^{\alpha} \label{e:vq:vell} \\
\norm{\overline{v}_q-v_\ell}_{N+\alpha} &\lesssim \tau_q\delta_{q+1}\ell^{-1-N+\alpha} \label{e:vq:vell:additional} \\
\norm{\bar v_q}_{1+N} &\lesssim \delta_{q}^{\sfrac12} \lambda_q \ell^{-N}\label{e:vq:1} 
\end{align}
for all $N \geq 0$.
\end{proposition}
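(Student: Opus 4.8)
The plan is to estimate $\overline v_q = \sum_i \chi_i v_i$ and its difference with $v_\ell$ by working locally on each interval $I_i$ or $J_i$, since the cut-offs $\chi_i$ form a partition of unity with $\supp\chi_i\cap\supp\chi_{i+2}=\emptyset$. On each such interval at most two consecutive $v_i$'s are active, so all the work reduces to estimating $v_i$ and the differences $v_i - v_{i+1}$, for which we already have the stability estimates of Proposition \ref{p:vi:vell} and Corollary \ref{c:size:vi}. First I would observe that on $J_i$ we have $\overline v_q = v_i$ exactly, and on $I_i$ we have $\overline v_q = \chi_i v_i + (1-\chi_i) v_{i+1} = v_\ell + \chi_i(v_i - v_\ell) + (1-\chi_i)(v_{i+1}-v_\ell)$.

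For \eqref{e:vq:vell}, using the last identity on $I_i$ (and trivially $\overline v_q - v_\ell = v_i - v_\ell$ on $J_i$), it suffices to bound $\|\chi_i(v_i-v_\ell)\|_\alpha + \|(1-\chi_i)(v_{i+1}-v_\ell)\|_\alpha$. Since $\|\chi_i\|_0\le 1$ and $\|\chi_i\|_\alpha\lesssim \tau_q^{-\alpha}$ by \eqref{e:dt:chi}, the product rule for Hölder norms gives a bound $\lesssim (1 + \tau_q^{-\alpha}\cdot(\text{something}))\cdot\|v_i-v_\ell\|_\alpha$. Plugging in \eqref{e:z_diff_k} with $N=0$, namely $\|v_i-v_\ell\|_{\alpha}\lesssim\tau_q\delta_{q+1}\ell^{-1+\alpha}$, together with $\|v_i - v_\ell\|_0 \lesssim \tau_q\delta_{q+1}\ell^{-1+\alpha}$ (also from Proposition \ref{p:vi:vell}), and using the definition $\tau_q = \ell^{2\alpha}\delta_q^{-1/2}\lambda_q^{-1}$ with $\ell\le\lambda_q^{-1}$, one computes $\tau_q\delta_{q+1}\ell^{-1+\alpha} = \delta_{q+1}\ell^{3\alpha}\delta_q^{-1/2}\lambda_q^{-1}\ell^{-1}\le \delta_{q+1}^{1/2}\ell^\alpha$ after using $\delta_{q+1}^{1/2}\le \delta_q^{1/2}$ and $\ell\le\lambda_q^{-1}$ (this is precisely the kind of parameter bookkeeping that \eqref{e:some_param_ineq} is designed to handle). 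The Hölder corrections involving $\tau_q^{-\alpha}$ are lower order by the same margin.

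For \eqref{e:vq:vell:additional} with $N\ge1$, I would differentiate the identity $\overline v_q - v_\ell = \chi_i(v_i-v_\ell) + (1-\chi_i)(v_{i+1}-v_\ell)$ on $I_i$ using the Leibniz rule: spatial derivatives hit only the $v_i$ factors (the $\chi_i$ depend on time alone), so $\|\overline v_q - v_\ell\|_{N+\alpha}\lesssim \|v_i - v_\ell\|_{N+\alpha} + \|v_{i+1}-v_\ell\|_{N+\alpha}$, and \eqref{e:z_diff_k} finishes it. For \eqref{e:vq:1} I would write, again on $I_i$, $\overline v_q = \chi_i v_i + (1-\chi_i)v_{i+1}$ and take $N$ spatial derivatives plus one more; since spatial derivatives again only see $v_i, v_{i+1}$, $\|\overline v_q\|_{1+N}\lesssim \|v_i\|_{1+N} + \|v_{i+1}\|_{1+N}$, and Corollary \ref{c:size:vi} gives $\|v_i\|_{1+N}\lesssim\|v_i\|_{1+N+\alpha}\lesssim \delta_q^{1/2}\lambda_q\ell^{-N}$ for $N\ge1$; the case $N=0$ follows from $\|v_i\|_{1+\alpha}\lesssim\|v_\ell\|_{1+\alpha}\lesssim\delta_q^{1/2}\lambda_q$ via \eqref{e:CFL} and \eqref{e:v:ell:k}. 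I expect the main obstacle to be purely the careful verification that the parameter inequalities (relating $\tau_q$, $\ell$, $\delta_q$, $\lambda_q$) indeed convert the $\tau_q\delta_{q+1}\ell^{-1+\alpha}$-type bounds into the clean $\delta_{q+1}^{1/2}\ell^\alpha$ form, and ensuring the $\tau_q^{-\alpha}$ factors from differentiating $\chi_i$ in the Hölder seminorm do not spoil this — but these are exactly the elementary manipulations licensed by \eqref{e:compare-lambda-ell}, \eqref{e:some_param_ineq}, and \eqref{e:bloody_integers}, so no genuinely new idea is needed beyond Proposition \ref{p:vi:vell}.
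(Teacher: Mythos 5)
Your treatment of \eqref{e:vq:vell} and \eqref{e:vq:vell:additional} is essentially the paper's argument: the identity $\overline v_q-v_\ell=\sum_i\chi_i(v_i-v_\ell)$, Proposition \ref{p:vi:vell}, and the parameter inequality $\delta_{q+1}^{\sfrac12}\tau_q\ell^{-1}=\ell^{2\alpha}\lambda_q^{\sfrac{3\alpha}{2}}\leq\lambda_q^{-\sfrac{\alpha}{2}}$ (this is \eqref{e:calculation11}; note that to verify it you must plug in the actual definition \eqref{e:ell_def} of $\ell$ --- the crude bounds $\delta_{q+1}\leq\delta_q$ and $\ell\leq\lambda_q^{-1}$ alone do not close it, since they leave an uncontrolled factor $(\lambda_q\ell)^{-1}\geq 1$). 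Two remarks. First, your worry about ``$\tau_q^{-\alpha}$ factors from differentiating $\chi_i$ in the H\"older seminorm'' is moot: the norms $\|\cdot\|_{N+\alpha}$ in this paper are \emph{spatial} (Appendix \ref{s:hoelder}), and $\chi_i$ depends on $t$ only, so $\|\chi_i(v_i-v_\ell)\|_{N+\alpha}\leq\|v_i-v_\ell\|_{N+\alpha}$ with no correction at all. This is fortunate, because your claim that such a factor would be ``lower order by the same margin'' is false: the available margin is $\lambda_q^{-\sfrac{\alpha}{2}}$, whereas $\tau_q^{-\alpha}\gtrsim\lambda_q^{\alpha}$, so a space-time H\"older norm would in fact destroy \eqref{e:vq:vell} by this route.

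The genuine gap is in \eqref{e:vq:1}. Corollary \ref{c:size:vi} controls $v_i$ only in the fractional norms, $\|v_i\|_{1+N+\alpha}\lesssim\delta_q^{\sfrac12}\lambda_q\ell^{-N-\alpha}$; there is no clean integer-norm bound $\|v_i\|_{1+N}\lesssim\delta_q^{\sfrac12}\lambda_q\ell^{-N}$ available for the exact Euler solutions (the Schauder-based local existence estimates inherently carry the $+\alpha$), and interpolation does not remove the loss. So your chain $\|v_i\|_{1+N}\lesssim\|v_i\|_{1+N+\alpha}\lesssim\delta_q^{\sfrac12}\lambda_q\ell^{-N}$ drops a factor $\ell^{-\alpha}$, and the same happens in your $N=0$ case via $\|v_\ell\|_{1+\alpha}\lesssim\delta_q^{\sfrac12}\lambda_q\ell^{-\alpha}$. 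You would therefore only obtain $\|\overline v_q\|_{1+N}\lesssim\delta_q^{\sfrac12}\lambda_q\ell^{-N-\alpha}$, which is strictly weaker than the stated \eqref{e:vq:1} (and the sharp form is used downstream, e.g.\ to get $\|\nabla\Phi_i-\Id\|_0\lesssim\tau_q\delta_q^{\sfrac12}\lambda_q=\ell^{2\alpha}$). The paper avoids this by never estimating $v_i$ in an integer norm: it writes $\overline v_q=v_\ell+(\overline v_q-v_\ell)$, uses the mollification bound \eqref{e:v:ell:k} for $v_\ell$ (which is sharp, with no $\alpha$-loss), and absorbs the difference via
\[
\|\overline v_q-v_\ell\|_{1+N+\alpha}\lesssim\tau_q\delta_{q+1}\ell^{-N-2+\alpha}=\delta_q^{\sfrac12}\lambda_q(\lambda_q\ell)^{3\alpha}\ell^{-N}\leq\delta_q^{\sfrac12}\lambda_q\ell^{-N}\,,
\]
where the surplus $(\lambda_q\ell)^{3\alpha}\leq 1$ eats the fractional loss. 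You should adopt this comparison-to-$v_\ell$ step to recover \eqref{e:vq:1} as stated.
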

In particular, this lemma shows that the claimed estimates \eqref{e:overline_v}--\eqref{e:overline_v_N} indeed hold.

\begin{proof}[Proof of Proposition~\ref{p:vq:vell}]
By definition
$$
\overline{v}_q-v_\ell=\sum_i\chi_i(v_i-v_\ell).
$$
Therefore Proposition \ref{p:vi:vell} implies
\begin{equation}\label{e:vq:vell:additional1}
\|\overline{v}_q-v_\ell\|_{N+\alpha}\lesssim \tau_q\delta_{q+1}\ell^{-1-N+\alpha}.
\end{equation}
Note that using the definition of $\ell$ in \eqref{e:ell_def} and $\tau_q$ in \eqref{e:tau_def} and the comparison \eqref{e:compare-lambda-ell} 
\begin{equation}\label{e:calculation11}
\delta_{q+1}^{\sfrac{1}{2}} \tau_q\ell^{-1}=\ell^{2\alpha} \lambda_q^{\sfrac{3\alpha}{2}}\leq \lambda_q^{-\sfrac{\alpha}{2}}\leq 1\, .
\end{equation}
Therefore we obtain \eqref{e:vq:vell}, and furthermore, for any $N\geq 0$
\begin{align*}
\|\overline{v}_q-v_\ell\|_{1+N+\alpha}&\lesssim \delta_{q+1}\tau_q\ell^{-N-2+\alpha}= \delta_q^{\sfrac12}\lambda_q(\ell\lambda_q)^{3\alpha} \ell^{-N} \leq \delta_q^{\sfrac12}\lambda_q\ell^{-N}.
\end{align*}
Then it also follows using \eqref{e:v:ell:k} that
\begin{align*}
\|\overline{v}_q\|_{1+N}\lesssim& \|v_\ell\|_{1+N}+\|v_\ell-\overline{v}_q\|_{1+N+\alpha}
\lesssim  \delta_q^{\sfrac12}\lambda_q\ell^{-N}.\qedhere
\end{align*}
\end{proof}

\subsection{Estimates on the stress tensor}
We are now in a position to estimate the glued stress tensor $\mathring{\overline R_q}$:
\begin{proposition}\label{p:Rq}
The stress tensor $\mathring{\overline R_q}$ satisfies the following bounds for any $N\geq 0$:
\begin{align} 
\norm{\mathring{\overline R_q}}_{N+\alpha} &\lesssim \delta_{q+1}\ell^{-N+\alpha} \label{e:Rq:1}\\
\norm{(\partial_t + \overline v_q\cdot \nabla) \mathring{\overline R_q}}_{N+\alpha} &\lesssim \delta_{q+1}\delta_q^{\sfrac12}\lambda_q\ell^{-N-\alpha}. \label{e:Rq:Dt}
\end{align}
\end{proposition}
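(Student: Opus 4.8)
The estimates are local in time: on each interval $I_i$ one has $\mathring{\overline R}_q=\partial_t\chi_i\,\mathcal R w_i-\chi_i(1-\chi_i)\,w_i\mathring{\otimes}w_i$ with $w_i:=v_i-v_{i+1}=(v_i-v_\ell)-(v_{i+1}-v_\ell)$, and since $|t-t_i|\le\tau_q$ and $|t-t_{i+1}|\le\tau_q$ for $t\in I_i$, all the bounds of Propositions~\ref{p:vi:vell} and \ref{p:S_est} for $v_i,v_{i+1}$ and $z_i-z_{i+1}$ apply there. As the $I_i$ are pairwise disjoint and $\mathring{\overline R}_q$ is smooth, it is enough to prove the inequalities on a single $I_i$ with constants independent of $i$; the cutoffs enter only through $\|\partial_t^k\chi_i\|_0\lesssim\tau_q^{-k}$ and $\|\chi_i(1-\chi_i)\|_0\le1$.

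The key observation for the $\mathcal R$-term is that $w_i=\curl(z_i-z_{i+1})$ by \eqref{e:Biot_Savart}, so $\mathcal R w_i=(\mathcal R\,\curl)(z_i-z_{i+1})$, and $\mathcal R\,\curl$ is an operator of order zero --- a matrix of Calder\'on--Zygmund operators plus constant-coefficient terms --- hence bounded on every $C^{N+\alpha}$. Combined with \eqref{e:z_diff} this gives $\|\mathcal R w_i\|_{N+\alpha}\lesssim\|z_i-z_{i+1}\|_{N+\alpha}\lesssim\tau_q\delta_{q+1}\ell^{-N+\alpha}$ (here $w_i$ has zero spatial mean, since $\int_{\T^3}v_\ell\,dx$ is independent of $t$, so $\mathcal R w_i$ is well defined). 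Then \eqref{e:Rq:1} is immediate: $\|\partial_t\chi_i\,\mathcal R w_i\|_{N+\alpha}\lesssim\tau_q^{-1}\|\mathcal R w_i\|_{N+\alpha}\lesssim\delta_{q+1}\ell^{-N+\alpha}$, while the Leibniz rule and \eqref{e:z_diff_k} give $\|\chi_i(1-\chi_i)\,w_i\mathring{\otimes}w_i\|_{N+\alpha}\lesssim\|w_i\|_0\|w_i\|_{N+\alpha}\lesssim\tau_q^2\delta_{q+1}^2\ell^{-N-2+2\alpha}$, which is $\lesssim\delta_{q+1}\ell^{-N+\alpha}$ by the parameter inequality $\tau_q^2\delta_{q+1}\ell^{-2}\le\lambda_q^{-\alpha}$ contained in \eqref{e:calculation11}.

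For \eqref{e:Rq:Dt} we expand $(\partial_t+\overline v_q\cdot\nabla)$ by the Leibniz rule on both summands (recall $\overline v_q\cdot\nabla\chi_i=0$). Using $\tau_q^{-1}=\delta_q^{\sfrac12}\lambda_q\ell^{-2\alpha}$ the right-hand side of \eqref{e:Rq:Dt} equals $\tau_q^{-1}\delta_{q+1}\ell^{-N+\alpha}$; the term $\partial_t^2\chi_i\,\mathcal R w_i$ is bounded by exactly $\tau_q^{-2}\|\mathcal R w_i\|_{N+\alpha}\lesssim\tau_q^{-1}\delta_{q+1}\ell^{-N+\alpha}$, and $\partial_t\big(\chi_i(1-\chi_i)\big)w_i\mathring{\otimes}w_i$ by $\tau_q^{-1}\|w_i\mathring{\otimes}w_i\|_{N+\alpha}$, which is even smaller by the estimate just used. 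The two remaining terms $\partial_t\chi_i\,(\partial_t+\overline v_q\cdot\nabla)\mathcal R w_i$ and $\chi_i(1-\chi_i)(\partial_t+\overline v_q\cdot\nabla)(w_i\mathring{\otimes}w_i)$ are, after one further use of the Leibniz rule, controlled by the two advective-derivative estimates
\begin{equation*}
\|(\partial_t+\overline v_q\cdot\nabla)\mathcal R w_i\|_{N+\alpha}\lesssim\delta_{q+1}\ell^{-N+\alpha},\qquad
\|(\partial_t+\overline v_q\cdot\nabla)w_i\|_{N+\alpha}\lesssim\delta_{q+1}\ell^{-N-1+\alpha},
\end{equation*}
after which \eqref{e:Rq:Dt} follows once more from \eqref{e:calculation11} and the value of $\tau_q^{-1}$. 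The second estimate is routine: split $\partial_t+\overline v_q\cdot\nabla=D_{t,\ell}+(\overline v_q-v_\ell)\cdot\nabla$, control the transport part by \eqref{e:z_D_t} and the remainder by \eqref{e:vq:vell}, \eqref{e:vq:vell:additional} and \eqref{e:z_diff_k}.

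The first advective-derivative estimate is the only delicate point, and I expect it to be the main obstacle. One again writes $\mathcal R w_i=(\mathcal R\,\curl)(z_i-z_{i+1})$ and $\partial_t+\overline v_q\cdot\nabla=D_{t,\ell}+(\overline v_q-v_\ell)\cdot\nabla$, commuting $D_{t,\ell}$ past the zeroth-order operator $\mathcal R\,\curl$:
\begin{equation*}
D_{t,\ell}(\mathcal R\,\curl)(z_i-z_{i+1})=(\mathcal R\,\curl)\,D_{t,\ell}(z_i-z_{i+1})+[v_\ell\cdot\nabla,\,\mathcal R\,\curl](z_i-z_{i+1});
\end{equation*}
the first term is handled by \eqref{e:z_diff_Dt}, and the commutator by the Calder\'on-type estimate for the advective derivative of a Calder\'on--Zygmund operator (Proposition~\ref{p:com:CZ:multiplication}), which yields $\lesssim\|v_\ell\|_{1+\alpha}\|z_i-z_{i+1}\|_{N+\alpha}+\|v_\ell\|_{N+1+\alpha}\|z_i-z_{i+1}\|_\alpha\lesssim\delta_{q+1}\ell^{-N+2\alpha}$ via \eqref{e:v:ell:k}, \eqref{e:z_diff} and $\delta_q^{\sfrac12}\lambda_q\tau_q=\ell^{2\alpha}$; the piece $(\overline v_q-v_\ell)\cdot\nabla\mathcal R w_i$ is estimated exactly as in the previous step. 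This commutator bound --- the ``commutator estimate on the advective derivative of a differential operator of negative order'' announced in Section~\ref{s:gluing_outline} --- is what allows the trace-free inverse-divergence $\mathcal R$ to be used here in place of the construction of \cite{Is2016}.
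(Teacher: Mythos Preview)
Your proof is correct and follows essentially the same approach as the paper: you rewrite $\mathcal R(v_i-v_{i+1})=(\mathcal R\,\curl)(z_i-z_{i+1})$ to exploit the zero-order nature of $\mathcal R\,\curl$, use Propositions~\ref{p:vi:vell} and~\ref{p:S_est} for the building blocks, and handle the advective derivative of $\mathcal R w_i$ via the commutator estimate of Proposition~\ref{p:com:CZ:multiplication}. The only organizational difference is that the paper first computes $D_{t,\ell}\mathring{\overline R}_q$ in full and then corrects by $(\overline v_q-v_\ell)\cdot\nabla\mathring{\overline R}_q$ at the very end, whereas you apply the splitting $\partial_t+\overline v_q\cdot\nabla=D_{t,\ell}+(\overline v_q-v_\ell)\cdot\nabla$ inside each term separately; this is purely cosmetic.
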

This shows that the claimed estimates \eqref{e:overline_R}--\eqref{e:Dt:overline_R} are indeed obeyed by $\mathring{\overline R_q}$.

\begin{proof}[Proof of Proposition~\ref{p:Rq}]
Recall that $v_i=\curl z_i$, so that we may write for $t\in I_i$:
$$
\mathring{\overline{R}}_q=\partial_t\chi_i(\mathcal{R}\curl)(z_i-z_{i+1})-\chi_i(1-\chi_i)(v_i-v_{i+1})\mathring\otimes (v_i-v_{i+1}).
$$
Note that $\mathcal{R}\curl$ is a zero-order operator. Therefore we obtain from Propositions \ref{p:vi:vell} and \ref{p:S_est} for any $N\geq 0$ with $t\in I_i$
\begin{align*}
\|\mathring{\overline{R}}_q\|_{N+\alpha}&\lesssim \tau_q^{-1}\|z_i-z_{i+1}\|_{N+\alpha}+\|v_i-v_{i+1}\|_{N+\alpha}\|v_i-v_{i+1}\|_\alpha\\
&\lesssim \delta_{q+1}\ell^{-N+\alpha}+\tau_q^2\delta_{q+1}^2\ell^{-2-N+2\alpha} \lesssim \delta_{q+1}\ell^{-N+\alpha}.
\end{align*}
Here we used again \eqref{e:calculation11}.
Next, we calculate
\begin{align*}
D_{t,\ell}\mathring{\overline{R}}_q
&=\partial_t^2\chi_i(\mathcal{R}\curl)(z_i-z_{i+1})\\
&\quad +\partial_t\chi_i(\mathcal{R}\curl)D_{t,\ell}(z_i-z_{i+1})+\partial\chi_i[v\cdot\nabla,\mathcal{R}\curl](z_i-z_{i+1})\\
&\quad -\partial_t(\chi_i(1-\chi_i))(v_i-v_{i+1})\mathring\otimes (v_i-v_{i+1})\\
&\quad -\chi_i(1-\chi_i))\Bigl((D_{t,\ell}(v_i-v_{i+1}))\mathring\otimes (v_i-v_{i+1})-(v_i-v_{i+1})\mathring\otimes (D_{t,\ell}(v_i-v_{i+1}))\Bigr),
\end{align*}
where $[v\cdot\nabla,\mathcal{R}\curl]$ denotes the commutator. 
Hence, using Proposition \ref{p:com:CZ:multiplication} and Propositions  \ref{p:vi:vell} and \ref{p:S_est} we deduce
\begin{align*}
\|D_{t,\ell}\mathring{\overline{R}}_q\|_{N+\alpha}
&\lesssim \tau_q^{-2}\|z_i-z_{i+1}\|_{N+\alpha}+\tau_q^{-1}\|D_{t,\ell}(z_i-z_{i+1})\|_{N+\alpha} \\
&\quad +\tau_q^{-1}\|v_\ell\|_\alpha\|z_i-z_{i+1}\|_{N+\alpha}+\tau_q^{-1}\|v_\ell\|_{N+\alpha}\|z_i-z_{i+1}\|_{\alpha} \\
&\quad +\tau_q^{-1}\|v_i-v_{i+1}\|_{N+\alpha}\|v_i-v_{i+1}\|_\alpha \\
&\quad +\|D_{t,\ell}(v_i-v_{i+1})\|_{N+\alpha}\|v_i-v_{i+1}\|_\alpha+\|v_i-v_{i+1}\|_{N+\alpha}\|D_{t,\ell}(v_i-v_{i+1})\|_\alpha\\
&\lesssim \tau_q^{-1}\delta_{q+1}\ell^{-N+\alpha}+(\tau_q^2\delta_{q+1}\ell^{-2})\tau_q^{-1}\delta_{q+1}\ell^{-N+2\alpha}\\
&\lesssim \tau_q^{-1}\delta_{q+1}\ell^{-N+\alpha}\,.
\end{align*}
Finally, we deduce using \eqref{e:vq:vell:additional}:
\begin{align*}
\norm{(\partial_t + \overline v_q\cdot \nabla) \mathring{\overline R_q}}_{N+\alpha}&\lesssim \|(v_\ell-\overline{v}_q)\cdot\nabla \mathring{\overline R_q}\|_{N+\alpha}+\|D_{t,\ell}\mathring{\overline{R}}_q\|_{N+\alpha}\\
&\lesssim \|v_\ell-\overline{v}_q\|_{N+\alpha}\|\mathring{\overline{R}}_q\|_{1+\alpha}  +\|v_\ell-\overline{v}_q\|_\alpha\|\mathring{\overline{R}}_q\|_{N+1+\alpha}+\|D_{t,\ell}\mathring{\overline{R}}_q\|_{N+\alpha}\\
&\lesssim  \tau_q\delta_{q+1}^2\ell^{-N-2+2\alpha} +\tau_q^{-1}\delta_{q+1}\ell^{-N+\alpha}\\
&\lesssim \tau_q^{-1}\delta_{q+1}\ell^{-N+\alpha} =\delta_{q+1}^{\sfrac12}\delta_q^{\sfrac12}\lambda_q\ell^{-N-\alpha}
\end{align*}
again using \eqref{e:calculation11}.
\end{proof}

To finish this section we show that $\overline v_q$ has approximately the same energy as $v_\ell$:
\begin{proposition}
The difference of the energies of $\overline v_q$ and $v_\ell$ satisfies
\label{p:glued:energy}
\begin{align}
\left| \int_{\T^3} |\bar v_q|^2 - |v_\ell |^2 dx \right| \lesssim \delta_{q+1}\ell^\alpha\label{e:voverline_vell_energy_diff}
\end{align} 
\end{proposition}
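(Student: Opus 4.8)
The plan is to exploit the structure of $\overline v_q$ on the two types of intervals separately. On each $J_i$ we have exactly $\overline v_q = v_i$, so there the energy difference is $\int_{\T^3} |v_i|^2 - |v_\ell|^2\,dx$; on each $I_i$ we have $\overline v_q = \chi_i v_i + (1-\chi_i) v_{i+1}$, and we may write $\overline v_q - v_\ell = \chi_i (v_i - v_\ell) + (1-\chi_i)(v_{i+1}-v_\ell)$. The first step is therefore to control $\int_{\T^3} |v_i|^2 - |v_\ell|^2\,dx$ for $|t - t_i| \le \tau_q$. Since $v_i(\cdot, t_i) = v_\ell(\cdot, t_i)$, the fundamental theorem of calculus in time gives
\begin{equation*}
\int_{\T^3} |v_i(x,t)|^2\,dx - \int_{\T^3} |v_\ell(x,t)|^2\,dx = \int_{t_i}^t \frac{d}{ds}\int_{\T^3} \left( |v_i|^2 - |v_\ell|^2 \right) dx\, ds\, .
\end{equation*}
Using the equations \eqref{e:vi:def} and \eqref{e:euler_reynolds_l}, the time derivative of $\int |v_i|^2$ vanishes (energy is conserved for the exact solution $v_i$), while $\frac{d}{ds}\int |v_\ell|^2 = -2\int v_\ell \cdot \nabla p_\ell + 2\int v_\ell \cdot \div \mathring R_\ell = -2 \int \nabla v_\ell : \mathring R_\ell$ after integrating by parts and using $\div v_\ell = 0$ and $\tr \mathring R_\ell = 0$ (in fact even without the trace-free property the pressure term drops). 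Hence this contribution is bounded by $|t - t_i|\, \|v_\ell\|_1 \|\mathring R_\ell\|_0 \lesssim \tau_q\, \delta_q^{\sfrac12}\lambda_q\, \delta_{q+1}\ell^\alpha \lesssim \delta_{q+1}\ell^\alpha$ by \eqref{e:v:ell:k}, \eqref{e:R:ell}, and the definition \eqref{e:tau_def} of $\tau_q$.

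The second step is to handle the cross terms on the intervals $I_i$. Expanding,
\begin{equation*}
|\overline v_q|^2 - |v_\ell|^2 = \left( |\overline v_q|^2 - |v_i|^2 \right) + \left( |v_i|^2 - |v_\ell|^2 \right)\, ,
\end{equation*}
and the second bracket is already estimated by the first step. For the first bracket we write $\overline v_q - v_i = (1-\chi_i)(v_{i+1} - v_i)$, so that
\begin{equation*}
|\overline v_q|^2 - |v_i|^2 = 2(1-\chi_i)\, v_i \cdot (v_{i+1}-v_i) + (1-\chi_i)^2 |v_{i+1}-v_i|^2\, ,
\end{equation*}
and both terms are pointwise bounded using $\|v_i - v_{i+1}\|_0 \le \|v_i - v_\ell\|_0 + \|v_{i+1}-v_\ell\|_0 \lesssim \tau_q \delta_{q+1}\ell^{-1+\alpha}$ from \eqref{e:z_diff_k}, together with $\|v_i\|_0 \lesssim 1$ (which follows from \eqref{e:v_q_0}, \eqref{e:v:ell:0} and Proposition~\ref{p:local:Euler}, or more crudely from \eqref{e:z_diff_k} plus $\|v_\ell\|_0 \le \|v_q\|_0$). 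This yields $\big| \int_{\T^3} |\overline v_q|^2 - |v_i|^2 \, dx \big| \lesssim \tau_q \delta_{q+1}\ell^{-1+\alpha} \lesssim \delta_{q+1}\ell^\alpha$, again by \eqref{e:calculation11}, i.e. $\tau_q \ell^{-1} \lesssim \ell^{-1} \cdot \ell^{2\alpha}\lambda_q^{\sfrac{3\alpha}{2}} \delta_{q+1}^{-\sfrac12} \cdot \delta_{q+1}^{\sfrac12} \lesssim 1$ — more directly $\tau_q \delta_q^{\sfrac12}\lambda_q = \ell^{2\alpha}$ and $\ell \le \lambda_q^{-1}$, so $\tau_q \ell^{-1} = \ell^{2\alpha}(\delta_q^{\sfrac12}\lambda_q \ell)^{-1} \cdot \delta_q^{\sfrac12} \le \ell^{2\alpha}$ after using the definition of $\ell$; in any case $\tau_q\ell^{-1}\lesssim 1$.

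Combining the two steps, on every $t \in [0,T]$ the point $t$ lies in some $I_i$ or $J_i$, and in either case $\big|\int_{\T^3} |\overline v_q|^2 - |v_\ell|^2\,dx\big| \lesssim \delta_{q+1}\ell^\alpha$, which is \eqref{e:voverline_vell_energy_diff}. The main subtlety — though it is mild — is the first step: one must observe that the exact Euler flow $v_i$ conserves kinetic energy, so that the entire discrepancy comes from the Reynolds stress driving $v_\ell$, and then that this discrepancy accumulates only over a time window of length $\tau_q$, which is precisely short enough (by the choice \eqref{e:tau_def}) to make the product $\tau_q \|v_\ell\|_1 \|\mathring R_\ell\|_0$ acceptable. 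The cross-term estimate on $I_i$ is then routine given the stability bound \eqref{e:z_diff_k} from Proposition~\ref{p:vi:vell}.
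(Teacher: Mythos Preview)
Your first step (controlling $\int |v_i|^2 - |v_\ell|^2\,dx$ via energy conservation for $v_i$ and the Reynolds production term for $v_\ell$) is correct and matches the paper. The gap is in your second step, specifically in the linear cross term.

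You bound $2(1-\chi_i)\,v_i\cdot(v_{i+1}-v_i)$ pointwise by $\|v_i\|_0\|v_{i+1}-v_i\|_0 \lesssim \tau_q\delta_{q+1}\ell^{-1+\alpha}$ and then assert $\tau_q\ell^{-1}\lesssim 1$. This last claim is false. From the definitions \eqref{e:ell_def}, \eqref{e:tau_def} one computes
\[
\tau_q\ell^{-1} \;=\; \frac{\ell^{2\alpha}\lambda_q^{3\alpha/2}}{\delta_{q+1}^{1/2}}\,,
\]
and \eqref{e:calculation11} says precisely that $\delta_{q+1}^{1/2}\tau_q\ell^{-1}\lesssim 1$, \emph{not} $\tau_q\ell^{-1}\lesssim 1$; the latter quantity is of order $\delta_{q+1}^{-1/2}$ and blows up. So your pointwise bound on the cross term yields only $\lesssim \delta_{q+1}^{1/2}\ell^\alpha$, short of the target by a factor $\delta_{q+1}^{1/2}$. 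The garbled derivation you give of $\tau_q\ell^{-1}\lesssim 1$ does not survive a careful check.

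The paper avoids this by using the convex--combination identity
\[
|\overline v_q|^2 \;=\; \chi_i|v_i|^2 + (1-\chi_i)|v_{i+1}|^2 - \chi_i(1-\chi_i)|v_i-v_{i+1}|^2\,,
\]
so that after subtracting $|v_\ell|^2$ the only ``cross'' contribution is the \emph{quadratic} $|v_i-v_{i+1}|^2$, which indeed satisfies $\|v_i-v_{i+1}\|_0^2 \lesssim (\tau_q\delta_{q+1}\ell^{-1+\alpha})^2 \lesssim \delta_{q+1}\ell^{2\alpha}$ by \eqref{e:calculation11}. Your decomposition can also be rescued without the identity: since $\chi_i=\chi_i(t)$ is independent of $x$, pull it out of the spatial integral and use polarization,
\[
2\int_{\T^3} v_i\cdot(v_{i+1}-v_i)\,dx \;=\; \int_{\T^3}|v_{i+1}|^2\,dx - \int_{\T^3}|v_i|^2\,dx - \int_{\T^3}|v_i-v_{i+1}|^2\,dx\,,
\]
and then invoke your first step for the first two terms. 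Either route closes the gap; the pointwise bound alone does not.
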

\begin{proof}[Proof of Proposition~\ref{p:glued:energy}]
Observe that for $t\in I_i$
\begin{align*}
 \overline v_q \otimes \overline v_q&=(\chi_iv_i+(1-\chi_i)v_{i+1})\otimes(\chi_iv_i+(1-\chi_i)v_{i+1})\\
 &=\chi_iv_i\otimes v_i+(1-\chi_i)v_{i+1}\otimes v_{i+1}-\chi_i(1-\chi_i)(v_i-v_{i+1})\otimes (v_i-v_{i+1}),
 \end{align*}
 so that, taking the trace:
 \begin{align*}
|\overline v_q|^2-|v_\ell|^2=\chi_i(|v_i|^2-|v_\ell|^2)+(1-\chi_i)(|v_{i+1}|^2-|v_\ell|^2)-\chi_i(1-\chi_i)|v_i-v_{i+1}|^2
\end{align*}
Next, recall that $v_i$ and $v_\ell$ are smooth solutions of \eqref{e:vi:def} and \eqref{e:euler_reynolds_l} respectively, therefore
\begin{align*}
\left|\frac{d}{dt}\int_{\T^3}|v_i|^2-|v_\ell|^2\,dx\right|=\left|\int_{\T^3}\nabla v_\ell:\mathring{R}_\ell\,dx\right|
&\lesssim \|\nabla v_\ell\|_0 \|\mathring{R}_\ell\|_0 \\
&\lesssim \delta_q^{\sfrac{1}{2}} \lambda_q \delta_{q+1} \lesssim\tau_q^{-1}\delta_{q+1}\ell^\alpha,
\end{align*}
where we have used \eqref{e:R:ell} and \eqref{e:z:N+alpha}. Moreover, $v_i=v_\ell$ for $t=t_i$. Therefore, after integrating in time we deduce
$$
\left|\int_{\T^3}|v_i|^2-|v_\ell|^2\,dx\right|\lesssim \delta_{q+1}\ell^\alpha.
$$
Furthermore, using \eqref{e:z_diff_k} and \eqref{e:calculation11} 
$$
\int_{\T^3}|v_i-v_{i+1}|^2\,dx\lesssim \|v_i-v_{i+1}\|_\alpha^2\lesssim \tau_q^2\delta_{q+1}^2\ell^{-2+2\alpha} \stackrel{\eqref{e:calculation11}}{\lesssim} \delta_{q+1}\ell^{2\alpha},
$$
Therefore 
$$
\left| \int |\bar v_q|^2 - |v_\ell |^2 dx \right|\lesssim \delta_{q+1}\ell^{\alpha},
$$
which concludes the proof.
\end{proof}

\section{Perturbation step}\label{s:perturbation}

In this section, we will outline the construction of the perturbation $w_{q+1}$, where 
\[
v_{q+1}:= w_{q+1} + \overline v_q \, ,
\] 
As already explained in the outline of the proof, the perturbation $w_{q+1}$ is highly oscillatory and will be based on the Mikado flows introduced in \cite{DaSz2016}, which are designed to cancel the low frequency error $\overline  R_q$ and are Lie-advected by the mean flow of $\overline v_q$.

\subsection{Mikado flows}
We begin by recalling the construction of Mikado flows given in \cite{DaSz2016}.

\begin{lemma}\label{l:Mikado}For any compact subset $\mathcal N\subset\subset \S^{3\times3}_+$
there exists a smooth vector field 
$$
W:\mathcal N\times \T^3 \to \R^3, 
$$
such that, for every $R\in\mathcal N$ 
\begin{equation}\label{e:Mikado}
\left\{\begin{aligned}
\div_\xi(W(R,\xi)\otimes W(R,\xi))&=0 \\ \\
\div_\xi W(R,\xi)&=0,
\end{aligned}\right.
\end{equation}
and
\begin{eqnarray}
	\fint_{\T^3} W(R,\xi)\,d\xi&=&0,\label{e:MikadoW}\\
    \fint_{\T^3} W(R,\xi)\otimes W(R,\xi)\,d\xi&=&R.\label{e:MikadoWW}
\end{eqnarray}
\end{lemma}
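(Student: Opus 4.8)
The plan is to construct Mikado flows explicitly as a superposition of pipe flows concentrated along a family of mutually disjoint, rationally oriented straight lines on $\T^3$. First I would fix a finite collection of directions $\{k_j\}_{j=1}^{J}\subset\Z^3\setminus\{0\}$ such that the symmetric matrices $k_j\otimes k_j/|k_j|^2$ positively span a neighbourhood of the identity in $\S^{3\times 3}_+$; by a standard linear-algebra/compactness argument (already used in the Beltrami-flow constructions, e.g.\ in \cite{DlSz2013,DaSz2016}) one may then choose, for each $R$ in the compact set $\mathcal N\subset\subset \S^{3\times3}_+$, smooth nonnegative coefficients $\gamma_j\in C^\infty(\mathcal N)$ with
\[
R=\sum_{j=1}^{J}\gamma_j(R)^2\, k_j\otimes k_j\,.
\]
For each direction $k_j$ one picks a line $\ell_j=x_j+\R k_j$ on $\T^3$ (using that $k_j\in\Z^3$ so the line is periodic) and a smooth compactly supported profile $\Phi_j$ on the $2$-plane orthogonal to $k_j$, periodised to $\T^3$, with the lines $\ell_j$ chosen pairwise disjoint — possible since finitely many lines can be translated off one another. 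Then one sets
\[
W(R,\xi)=\sum_{j=1}^{J}\gamma_j(R)\, \Phi_j(\xi)\, \frac{k_j}{|k_j|}\,,
\]
where $\Phi_j$ is normalised so that $\fint_{\T^3}\Phi_j^2\,d\xi=1$, and, by subtracting a constant from each $\Phi_j$, so that $\fint_{\T^3}\Phi_j\,d\xi=0$ (this is compatible with $\Phi_j$ being a function of the two transverse variables only, supported in a thin tube around $\ell_j$).

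The verification then proceeds in a few short steps. Divergence-freeness $\div_\xi W=0$ holds termwise because $\Phi_j$ depends only on the variables orthogonal to $k_j$, so $k_j\cdot\nabla_\xi\Phi_j=0$. For the stationary Euler (pressureless) equation $\div_\xi(W\otimes W)=0$: expanding $W\otimes W$ gives diagonal terms $\gamma_j^2\Phi_j^2\,\frac{k_j\otimes k_j}{|k_j|^2}$, each of which has divergence $\gamma_j^2\,\frac{k_j}{|k_j|^2}(k_j\cdot\nabla_\xi)\Phi_j^2=0$ for the same reason; the cross terms $\Phi_i\Phi_j\,k_i\otimes k_j$ vanish identically once the supports of $\Phi_i$ and $\Phi_j$ are disjoint, which is exactly the disjointness of the tubes around the lines $\ell_j$. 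The mean-zero condition \eqref{e:MikadoW} is immediate from $\fint\Phi_j\,d\xi=0$ and the smoothness of $\gamma_j$ in $R$. Finally, for \eqref{e:MikadoWW}, again using disjoint supports the cross terms integrate to zero and
\[
\fint_{\T^3}W\otimes W\,d\xi=\sum_{j=1}^{J}\gamma_j(R)^2\Big(\fint_{\T^3}\Phi_j^2\,d\xi\Big)\frac{k_j\otimes k_j}{|k_j|^2}=\sum_{j=1}^{J}\gamma_j(R)^2\,\frac{k_j\otimes k_j}{|k_j|^2}=R\,,
\]
where we absorbed $|k_j|$ into the choice of $k_j$ (replacing $k_j\otimes k_j$ by $k_j\otimes k_j/|k_j|^2$ throughout the spanning argument). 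Smoothness of $(R,\xi)\mapsto W(R,\xi)$ is clear since each $\gamma_j$ is smooth on $\mathcal N$ and each $\Phi_j$ is a fixed smooth periodic function.

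The main obstacle — and the only genuinely non-routine point — is arranging simultaneously that (i) the directions $k_j$ are lattice vectors (so the lines close up on $\T^3$), (ii) the corresponding rank-one matrices positively span a full neighbourhood of $\mathrm{Id}$ in $\S^{3\times3}_+$ with smooth coefficients on the given compact set $\mathcal N$, and (iii) the associated periodic tubes can be made pairwise disjoint. Point (ii) is the classical lemma on writing positive-definite matrices as positive combinations of a fixed finite family of rank-one matrices (one needs enough directions, generically chosen, and then a smooth partition-of-unity / convex-geometry argument to get the coefficients depending smoothly on $R$); point (iii) requires only that the chosen lines have pairwise distinct directions, after which a small generic translation of each line removes all intersections, and the supports $\Phi_j$ are then taken in sufficiently thin tubes. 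Since all of this is carried out in \cite{DaSz2016}, in the present note I would simply cite that construction, state the properties \eqref{e:Mikado}–\eqref{e:MikadoWW}, and move on; a self-contained proof would follow the scheme above.
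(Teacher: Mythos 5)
Your construction is essentially the one the paper relies on: the paper gives no proof of Lemma \ref{l:Mikado} and simply recalls it from \cite{DaSz2016}, and your superposition of pipe flows along pairwise disjoint periodic lattice lines, with coefficients $\gamma_j(R)^2$ coming from the standard rank-one decomposition of positive definite matrices, is exactly that construction. The verification of \eqref{e:Mikado}, \eqref{e:MikadoW}, \eqref{e:MikadoWW} via disjoint supports and invariance of the profiles along their own directions is correct.

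One point needs fixing: you cannot obtain $\fint_{\T^3}\Phi_j\,d\xi=0$ ``by subtracting a constant from each $\Phi_j$'', since the resulting function would no longer be supported in a thin tube, and the disjoint-support argument that kills the cross terms in $W\otimes W$ (and hence \eqref{e:Mikado} and \eqref{e:MikadoWW}) would collapse. The correct remedy is to choose each transverse profile from the outset as a smooth, compactly supported function on the orthogonal $2$-torus with zero mean and $\fint\Phi_j^2=1$ (such functions obviously exist, e.g.\ suitable sign-changing bumps), which simultaneously gives \eqref{e:MikadoW} and preserves the tube support. With that adjustment the argument is complete.
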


Using the fact that $W(R,\xi)$ is $\T^3$-periodic and has zero mean in $\xi$, we write
\begin{equation}\label{e:Mikado_Fourier}
W(R,\xi)=\sum_{k \in \Z^3\setminus\{0\}} a_k(R)A_k e^{ik\cdot \xi}
\end{equation}
for some coefficients $a_k(R)$ and complex vector $A_k\in \C^3$, satisfying $A_k\cdot k=0$ and $\abs{A_k}=1$. From the smoothness of $W$, we further infer
\begin{equation}\label{e:a_k_est}
\sup_{R\in \mathcal N}\abs{D^N_R a_k(R)}\leq  \frac{C(\mathcal{N},N,m)}{\abs {k}^m}
\end{equation}
for some constant $C$, which depends, as highlighted in the statement, on $\mathcal{N}$, $N$ and $m$.
\begin{remark}\label{r:choice_of_M}
Later in the proof the estimates \eqref{e:a_k_est} will be used with a specific choice of the compact set $\mathcal{N}$ and of the integers $N$ and $m$: this specific choice will then determine the universal constant $M$ appearing in Proposition \ref{p:main}.
\end{remark}

Using the Fourier representation we see that from \eqref{e:MikadoWW}
\begin{equation}\label{e:Mikado_stationarity}
W(R,\xi)\otimes W(R,\xi) = R+\sum_{k\neq 0} C_{k}(R) e^{i k\cdot \xi}
\end{equation}
where
\begin{equation}\label{e:Ck_ind}
C_k  k=0 \quad \mbox{and} \quad
\sup_{R\in \mathcal N}\abs{D^N_R C_k(R)}\leq \frac{C (\mathcal{N}, N, m)}{\abs {k}^m}
\end{equation}
for any $m,N \in \N$. 

It will also be useful to write the Mikado flows in terms of a potential. We note
\begin{align}
\curl_{\xi}\left(\left(\frac{ik\times A_k}{\abs{k}^2}\right) e^{k\cdot \xi}\right) &= -i\left(\frac{ik\times A_k}{\abs{k}^2}\right)\times k  e^{k\cdot \xi} 
= -\frac{k\times (k\times A_k)}{\abs{k}^2}  e^{k\cdot \xi} = A_k  e^{k\cdot \xi} \label{e:Mikado_Potential}
\end{align}

\subsection{Squiggling stripes and the stress tensor \texorpdfstring{$\tilde{R}_{q,i}$}{tilde Rqi}}
Recall that $\mathring{\overline R_q}$ is supported in the set $\T^3\times \bigcup_iI_i$, whereas, from \eqref{e:chi:time:width} it follows that 
$[0,T]\setminus  \bigcup_iI_i=\bigcup_iJ_i$, where the open intervals $J_i$ have length $|J_i|=\tfrac 23\tau_q$ each, except for the first and last one, which might be shortened by the intersection with $[0,T]$, more precisely
$$
J_i=(t_i- \tfrac 13 \tau_q,t_i+\tfrac 13\tau_q) \cap [0,T] \, .
$$
We start by defining smooth non-negative cut-off functions $\eta_i=\eta_i(x,t)$ with the following properties
\begin{enumerate}
\item[(i)] $\eta_i\in C^{\infty}(\T^3\times [0,T])$ with $0\leq \eta_i(x,t)\leq 1$ for all $(x,t)$;
\item[(ii)] $\supp \eta_i\cap\supp\eta_j=\emptyset$ for $i\neq j$;
\item[(iii)] $\T^3\times I_i\subset \{(x,t):\eta_i(x,t)=1\}$;
\item[(iv)] $\supp \eta_i\subset \T^3\times I_i\cup J_i\cup J_{i+1}=\T^3\times (t_i- \tfrac 13 \tau_q,t_{i+1}+\tfrac 13 \tau_q)\cap [0,T]$;
\item[(v)] There exists a positive geometric constant $c_0>0$ such that for any $t\in[0,T]$
$$
\sum_i\int_{\T^3}\eta_i^2(x,t)\,dx\geq c_0.
$$
\end{enumerate}
In view of (iv) we set
$$
\tilde I_i=(t_i- \tfrac 13 \tau_q,t_{i+1}+\tfrac 13 \tau_q ) \cap [0,T] \,.
$$
\begin{lemma}\label{l:cutoffs}
There exists cut-off functions $\{\eta_i\}_i$ with the properties (i)-(v) above and such that for any $i$ and $n,m\geq 0$
\begin{align*}
\|\partial_t^n\eta_i\|_{m}\leq C (n,m) \tau_q^{-n}
\end{align*}
where $C(n,m)$ are geometric constants depending only upon $m$ and $n$.
\end{lemma}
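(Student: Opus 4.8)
The plan is to construct the cut-offs $\eta_i$ essentially as functions of the single time variable $t$, suitably modified near the boundaries of the intervals $\tilde I_i$ by ``squiggling'' the interface in the $x$-direction so that property (v) — a uniform lower bound on $\sum_i \int_{\T^3}\eta_i^2\,dx$ — holds for \emph{all} $t\in[0,T]$, including the transition times $t\in J_i$ where a purely time-dependent partition would force some $\eta_i$ to vanish simultaneously. First I would fix a smooth $1$-periodic ``squiggling'' profile $\phi\colon \R\times\T^3\to\R$ and set, for a model interval, $\eta_i(x,t) = \rho\bigl(\tfrac{t - t_i}{\tau_q} - \Phi_i(x)\bigr)$ where $\rho$ is a fixed smooth bump equal to $1$ on a central sub-interval and supported in a slightly larger one, and $\Phi_i$ is a bounded smooth function of $x$ (the ``squiggle'') whose amplitude is a small geometric constant. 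The point of the $x$-dependence is that at any fixed $t$ lying in a would-be gap, the set where $\eta_i(\cdot,t)=1$ is a nonempty open subset of $\T^3$ (the squiggled stripe sweeps across $\T^3$ as $t$ varies), so $\int_{\T^3}\eta_i^2(x,t)\,dx \geq c_0$ for that particular $i$.

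Next I would check properties (i)–(iv). Properties (i) and (iii) are immediate from the choice of $\rho$ (taking $\rho\equiv 1$ on a neighbourhood of the image of $I_i$ under the time-shift, which is possible since the squiggle amplitude is small compared to $\tfrac13$). Property (iv), the support containment in $\T^3\times\tilde I_i$, follows by choosing the support of $\rho$ small enough relative to the squiggle amplitude; and property (ii), disjointness of supports of $\eta_i$ and $\eta_j$ for $i\neq j$, follows because $\tilde I_i$ and $\tilde I_j$ overlap only for $|i-j|=1$ and there one arranges the supports of consecutive $\eta$'s to interleave without meeting — concretely one can use the same function $\rho$ shifted so that $\supp\rho$ has length $<1$ and $\supp\rho$ and $\supp\rho(\cdot - 1)$ are disjoint, which after the $x$-squiggle (uniformly small) remain disjoint. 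The derivative bounds $\|\partial_t^n\eta_i\|_m \leq C(n,m)\tau_q^{-n}$ are then a routine application of the chain rule: each $t$-derivative produces a factor $\tau_q^{-1}$ from the argument $\tfrac{t-t_i}{\tau_q}$, while $x$-derivatives of order up to $m$ only hit $\Phi_i$ and $\rho$, contributing $t$-independent geometric constants, since $\Phi_i$ and all its derivatives are bounded by constants independent of $q$ (the squiggle shape is fixed once and for all).

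The main obstacle is verifying property (v) with a constant $c_0$ that is \emph{geometric}, i.e. independent of $q$, $\tau_q$, and the position of $t$ in $[0,T]$. The delicate case is $t$ near a transition point $t\approx t_i + \tfrac13\tau_q$ (the boundary between the ``$I$-region'' and the ``$J$-region''), where $\eta_i$ is ramping down and $\eta_{i+1}$ is ramping up. One must ensure that for every such $t$ the squiggled level sets $\{\eta_i(\cdot,t)=1\}$ and $\{\eta_{i+1}(\cdot,t)=1\}$ cover $\T^3$, or at least that $\eta_i^2(\cdot,t)+\eta_{i+1}^2(\cdot,t)$ has integral bounded below. This is arranged by choosing the squiggle $\Phi_{i+1}$ of $\eta_{i+1}$ to be a ``shifted'' version of $\Phi_i$ — literally translating the stripe pattern by half a period in a fixed direction — so that wherever $\eta_i(\cdot,t)<1$ because $x$ lies in a ``late'' part of the squiggle, the point $x$ lies in an ``early'' part for $\eta_{i+1}$, hence $\eta_{i+1}(\cdot,t)=1$ there. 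The lower bound $c_0$ is then the measure of the union, which by the periodic structure and compactness is a fixed positive number. I would present this by first constructing the model pair $(\eta_0,\eta_1)$ on two overlapping intervals, establishing (v) for that pair by an explicit (or compactness) argument, and then translating periodically in $t$ and gluing; the only subtlety, handled by the small-amplitude choice, is that the squiggle must not be so large that it breaks property (iv).
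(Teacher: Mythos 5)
Your overall strategy is the one the paper actually uses: this is precisely the ``squiggling stripes'' construction, a stripe in $(x,t)$ of temporal thickness $\sim\tau_q$ whose boundary oscillates in $x_1$ by an amount comparable to $\tau_q$. (The paper realizes it by mollifying the sharp indicator of such a region at spatial scale $c_1$ and temporal scale $c_2\tau_q$, rather than by composing a fixed profile $\rho$ with a squiggled time coordinate, but that difference is cosmetic, and your chain-rule argument for $\|\partial_t^n\eta_i\|_m\leq C(n,m)\tau_q^{-n}$ is correct.) The gap is in the mechanism you propose for property (v), which you rightly single out as the only delicate point. First, the literal claim that the level sets $\{\eta_i(\cdot,t)=1\}$ and $\{\eta_{i+1}(\cdot,t)=1\}$ together cover $\T^3$ at a transition time is impossible: by (ii) these are disjoint closed subsets of the connected space $\T^3$, so they can cover it only if one is empty and the other is all of $\T^3$; property (v) only requires a fixed positive fraction, and that is what one must aim for. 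Second, and more seriously, shifting the squiggle of $\eta_{i+1}$ by half a period relative to that of $\eta_i$ (for a sinusoidal squiggle this means $\Phi_{i+1}=-\Phi_i$) is exactly the wrong choice: the temporal gap between $\supp\eta_i(x,\cdot)$ and $\supp\eta_{i+1}(x,\cdot)$ then has its two endpoints moving in \emph{opposite} directions as $x$ varies, so it dilates and contracts about a fixed central time $t_*$ instead of sweeping across $\T^3$. At $t=t_*$ every $x$ lies in the gap or in the transition zones of both cutoffs simultaneously, and the argument as described does not yield a geometric lower bound on $\int_{\T^3}(\eta_i^2+\eta_{i+1}^2)\,dx$; indeed the disjointness constraint (ii) caps the admissible squiggle amplitude below what the covering argument would require.

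The fix is what the paper does: use the \emph{same} squiggle on both sides of every gap (a single global profile, $\tfrac{\tau_q}{6}\sin(2\pi x_1)$ for all $i$). Then each gap is a band of constant temporal width $\tau_q/6$ whose position translates by $\tau_q/3$ as $x_1$ runs over the circle, so for every fixed $t$ the set of $x_1$ for which $(x,t)$ lies in the gap-plus-transition zone has measure bounded away from $1$ by a geometric constant, and on the complement one of $\eta_i,\eta_{i+1}$ equals $1$; this gives (v) with a geometric $c_0$. Relatedly, the squiggle amplitude cannot be taken ``small'' relative to the gap: it must exceed the gap width plus the width of the transition of $\rho$ (all three being fixed fractions of $\tau_q$). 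Smallness is needed only, as you note, to preserve (iii) and (iv), so the amplitude has to be chosen in a fixed window, not merely sufficiently small.
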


\begin{proof}[Proof of Lemma~\ref{l:cutoffs}]
First of all we consider the sharp cutoffs $\tilde{\eta}_i$ defined by
\begin{align*}
\tilde \eta_i &=  {\mathbf 1}_{\tilde \Omega_i} \\
\tilde \Omega_i &= \left\{ (x,t) \colon t_i + \tfrac{\tau_q}{6} ( \sin(2\pi x_1) + \tfrac 12 ) \leq t \leq t_{i+1} + \tfrac{\tau_q}{6} (\sin(2\pi x_1) - \tfrac 12) \right\}
\end{align*}
Next we fix a standard mollifier $\varkappa$ in time and the standard mollifier $\psi$ in space already used so far.
Hence we define $\eta_i$ by mollifying $\tilde \eta_i$ in space and time as follows:
\begin{equation*}
\eta_i (x,t) = \int \tilde \eta_i (y, s) \psi \left(\frac{x-y}{c_1}\right) \varkappa \left(\frac{t-s}{c_2 \tau_q}\right)\, dy\, ds\, ,
\end{equation*}
 where $c_1$ and $c_2$ are positive geometric constants. One may check that a suitable choice of $c_1$ and $c_2$ yields the desired conclusions (see Figure~\ref{f:squiggles}).\end{proof}

\begin{figure}
\begin{center}
\includegraphics[scale=0.25]{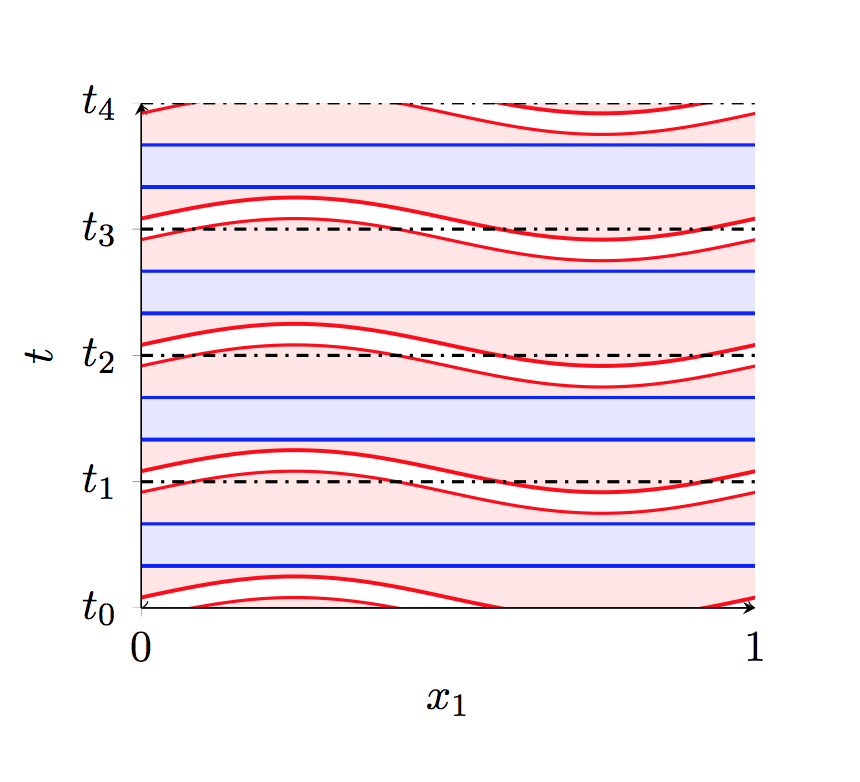}
\end{center}
\caption{The support of $\mathring{\overline R_q}$ is given by the blue regions. The support of the cut-off functions $\eta_i$ are encapsulated in the red region.}\label{f:squiggles}
\end{figure}

Define 
\begin{equation*}
\rho_{q}(t):= \frac{1}{3} \left(e(t)-\frac{\delta_{q+2}}{2}-\int_{\T^3}\abs{\overline v_q}^2\,dx\right)
\end{equation*}
and
\begin{equation*}
\rho_{q,i}(x,t):= \frac{\eta_i^2(x,t)}{\sum_j \int_{\T^3} \eta_j^2(y,t)\,dy}\rho_{q}(t)
\end{equation*}

Define the backward flows $\Phi_i$ for the velocity field $\overline v_{q}$ as the solution of the transport equation
\begin{equation*}
\left\{ 
\begin{aligned}
&(\partial_t + \overline v_q  \cdot \nabla) \Phi_i =0 \\ \\
&\Phi_i\left(x,t_i\right) = x.
\end{aligned}
\right.
\end{equation*}
Define
\begin{equation*}
R_{q,i}:=\rho_{q,i} \Id- \eta_i^2\mathring{\overline R_q}
\end{equation*}
and
\begin{equation}\label{e:tildeR_def}
\tilde R_{q,i} =  \frac{\nabla\Phi_iR_{q,i}(\nabla\Phi_i)^T}{ \rho_{q,i}} \,.
\end{equation}
We note that, because of properties (ii)-(iv) of $\eta_i$, 
\begin{itemize}
\item $\supp R_{q,i}\subset \supp\eta_i$ and on $\supp\eta_i$ we have $R_{q,i}=\rho_{q+1,i} \Id- \mathring{\overline R_q}$;
\item $\supp \tilde R_{q,i}\subset \T^3\times (t_i- \tfrac 13 \tau_q,t_{i+1}+ \tfrac 13 \tau_q)$;
\item $\supp \tilde R_{q,i}\cap \supp \tilde R_{q,j}=\emptyset\textrm{ for all }i\neq j$.
\end{itemize}

\begin{lemma}
\label{l:R_in_range}
For $a\gg 1$ sufficiently large we have
\begin{equation}\label{e:Phi-close-to-id}
\|\nabla \Phi_i - \Id\|_0 \leq \frac{1}{2} \qquad \mbox{for $t\in \supp (\eta_i)$.}
\end{equation}
Furthermore, for any $N\geq 0$ 
\begin{align}
\frac{\delta_{q+1}}{8\lambda_q^{\alpha}} \leq |\rho_{q}(t)| &\leq \delta_{q+1}\quad\textrm{ for all $t$}\,,
\label{e:rho_range}\\
\norm{\rho_{q,i}}_0 &\leq \frac{\delta_{q+1}}{c_0}\,,\label{e:rho_i_bnd}\\
 \norm{\rho_{q,i}}_N&\lesssim \delta_{q+1}\,,\label{e:rho_i_bnd_N}\\
 \norm{\partial_t \rho_q}_0 &\lesssim \delta_{q+1} \delta_q^{\sfrac{1}{2}} \lambda_q
 \label{e:rho_t}\,,\\
 \norm{\partial_t \rho_{q,i}}_N &\lesssim \delta_{q+1}\tau_q^{-1}\,.
 \label{e:rho_i_bnd_t}
\end{align}
Moreover, for all $(x,t)$
$$
\tilde R_{q,i}(x,t)\in B_{\sfrac12}(\Id)\subset \mathcal{S}^{3\times 3}_+\,,
$$
where $B_{\sfrac12}(\Id)$ denotes the metric ball of radius $1/2$ around the identity $\Id$ in the space $\mathcal{S}^{3\times 3}$. 
\end{lemma}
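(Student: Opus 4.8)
The plan is to verify the six estimates in the order they are listed, since several of them feed into the later ones, and then deduce the final containment $\tilde R_{q,i}(x,t)\in B_{1/2}(\Id)$.

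First I would establish \eqref{e:Phi-close-to-id}. Writing $g_i=\nabla\Phi_i-\Id$, differentiating the transport equation $(\partial_t+\overline v_q\cdot\nabla)\Phi_i=0$ in space gives a transport equation for $\nabla\Phi_i$ of the form $D_{t,q}\nabla\Phi_i=-(\nabla\overline v_q)\nabla\Phi_i$, where $D_{t,q}=\partial_t+\overline v_q\cdot\nabla$. Since $\Phi_i(\cdot,t_i)=\mathrm{id}$, integrating along characteristics and using Gr\"onwall (as in \eqref{e:trans_est_alpha}) yields $\|\nabla\Phi_i-\Id\|_0\lesssim |t-t_i|\,\|\overline v_q\|_1\exp(|t-t_i|\,\|\overline v_q\|_1)$. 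On $\supp(\eta_i)$ we have $|t-t_i|\lesssim\tau_q$ by property (iv), and from \eqref{e:vq:1} we get $\|\overline v_q\|_1\lesssim\delta_q^{1/2}\lambda_q$, so $\tau_q\|\overline v_q\|_1\lesssim\ell^{2\alpha}\ll1$ by the CFL-type bound \eqref{e:CFL}; hence the right-hand side is $\leq1/2$ once $a$ is large. (The higher-order bounds $\|\nabla\Phi_i-\Id\|_N$, needed later, follow by the same commutator-plus-Gr\"onwall argument applied to $\partial^\theta\nabla\Phi_i$, using \eqref{e:vq:1}.)

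Next the estimates on $\rho_q$ and $\rho_{q,i}$. The lower and upper bounds \eqref{e:rho_range} are immediate from the definition $\rho_q=\tfrac13\!\left(e-\tfrac{\delta_{q+2}}{2}-\int|\overline v_q|^2\right)$ together with \eqref{e:voverline_energy_error}: indeed $\tfrac13\bigl(\tfrac{\delta_{q+1}}{2\lambda_q^\alpha}-\tfrac{\delta_{q+2}}{2}\bigr)\leq\rho_q\leq\tfrac13\cdot2\delta_{q+1}$, and since $\delta_{q+2}\ll\delta_{q+1}\lambda_q^{-\alpha}$ for $a$ large the lower bound is $\geq\tfrac{\delta_{q+1}}{8\lambda_q^\alpha}$ while the upper bound is $\leq\delta_{q+1}$. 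For $\rho_{q,i}=\frac{\eta_i^2}{\sum_j\int\eta_j^2}\rho_q$: property (v) gives $\sum_j\int\eta_j^2\geq c_0$ and $0\leq\eta_i^2\leq1$, so $\|\rho_{q,i}\|_0\leq\delta_{q+1}/c_0$, proving \eqref{e:rho_i_bnd}. The higher norms \eqref{e:rho_i_bnd_N} and the time-derivative bounds follow from the Leibniz rule, Lemma~\ref{l:cutoffs} (the bounds $\|\partial_t^n\eta_i\|_m\lesssim\tau_q^{-n}$), and $\|\partial_t\rho_q\|_0\lesssim|\frac{d}{dt}\int|\overline v_q|^2|+\sup|e'|$. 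Here $\frac{d}{dt}\int|\overline v_q|^2=-2\int\nabla\overline v_q:\mathring{\overline R}_q$ from the Euler–Reynolds system, and $\|\nabla\overline v_q\|_0\|\mathring{\overline R}_q\|_0\lesssim\delta_q^{1/2}\lambda_q\delta_{q+1}$ by \eqref{e:vq:1} and \eqref{e:Rq:1}, giving \eqref{e:rho_t}; dividing by an extra $\tau_q$ from $\partial_t\eta_i^2$ or $\partial_t(\sum\int\eta_j^2)^{-1}$ produces the $\tau_q^{-1}$ in \eqref{e:rho_i_bnd_t} (one checks $\delta_q^{1/2}\lambda_q\leq\tau_q^{-1}$, which holds since $\tau_q^{-1}=\delta_q^{1/2}\lambda_q\ell^{-2\alpha}$).

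Finally, the containment $\tilde R_{q,i}\in B_{1/2}(\Id)$. On $\supp\eta_i$ we have $R_{q,i}=\rho_{q,i}\Id-\mathring{\overline R}_q$, so $R_{q,i}/\rho_{q,i}=\Id-\mathring{\overline R}_q/\rho_{q,i}$, and by \eqref{e:Rq:1} and \eqref{e:rho_range} (note $\rho_{q,i}=\eta_i^2\rho_q/\sum_j\int\eta_j^2$ so on $\supp\eta_i$ where $\eta_i$ may be small this needs care — but $\tilde R_{q,i}$ only matters where $\eta_i\neq0$ and there $\|\mathring{\overline R}_q/\rho_{q,i}\|_0=\|\mathring{\overline R}_q\|_0\frac{\sum_j\int\eta_j^2}{\eta_i^2\rho_q}$, which is \emph{not} obviously small). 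This is the main obstacle: one must use that where $\eta_i^2<1$ the tensor $\mathring{\overline R}_q$ is supported only in $I_i$ where $\eta_i\equiv1$ by property (iii) — so in fact $\mathring{\overline R}_q\neq0$ forces $\eta_i=1$, hence $\eta_i^2\mathring{\overline R}_q=\mathring{\overline R}_q$ and the ratio $\mathring{\overline R}_q/\rho_{q,i}$ equals $\mathring{\overline R}_q\cdot\frac{\sum_j\int\eta_j^2}{\rho_q}$ wherever it is nonzero, which is bounded by $\delta_{q+1}\ell^\alpha\cdot\frac{1}{c_0}\cdot\frac{8\lambda_q^\alpha}{\delta_{q+1}}\lesssim\lambda_q^\alpha\ell^\alpha\leq\lambda_q^{-\alpha/2}\ll1$ using \eqref{e:compare-lambda-ell}. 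Thus $R_{q,i}/\rho_{q,i}\in B_{1/4}(\Id)$ for $a$ large; conjugating by $\nabla\Phi_i$ which satisfies $\|\nabla\Phi_i-\Id\|_0\leq1/2$ moves this into $B_{1/2}(\Id)$, and since $B_{1/2}(\Id)\subset\mathcal{S}^{3\times3}_+$ (the identity has eigenvalues $1$, and a perturbation of operator norm $<1$ keeps them positive), we conclude $\tilde R_{q,i}(x,t)\in B_{1/2}(\Id)\subset\mathcal{S}^{3\times3}_+$ as claimed. I would double-check the precise radius bookkeeping in the conjugation step, as that is where constants could bite.
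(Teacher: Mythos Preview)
Your approach is essentially the same as the paper's, and most steps are correct. Two points need fixing, however --- one minor, one more substantive.

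First, the inequality $\lambda_q^{\alpha}\ell^{\alpha}\leq \lambda_q^{-\alpha/2}$ does not follow from \eqref{e:compare-lambda-ell} as you claim: that inequality only gives $\ell\leq \lambda_q^{-1}$, hence $\lambda_q^{\alpha}\ell^{\alpha}\leq 1$, which is not small. You need instead to go back to the definition \eqref{e:ell_def}, which yields $\lambda_q\ell=(\delta_{q+1}/\delta_q)^{1/2}\lambda_q^{-3\alpha/2}\leq \lambda_q^{-3\alpha/2}$ and hence $(\lambda_q\ell)^{\alpha}\leq \lambda_q^{-3\alpha^2/2}$, which does go to zero as $a\to\infty$. (Also, a tiny slip: the upper bound for $\sum_j\int\eta_j^2$ is $2$, not $1/c_0$; $c_0$ is the lower bound.)

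Second, and more importantly, the conjugation bookkeeping you flag at the end does in fact bite. Knowing only $R_{q,i}/\rho_{q,i}\in B_{1/4}(\Id)$ and $\|\nabla\Phi_i-\Id\|_0\leq 1/2$ is \emph{not} enough to conclude $\nabla\Phi_i\,(R_{q,i}/\rho_{q,i})\,\nabla\Phi_i^T\in B_{1/2}(\Id)$: writing $\nabla\Phi_i=\Id+E$ with $|E|\leq 1/2$ and $M\in B_{1/4}(\Id)$, the cross terms $EM+ME^T+EME^T$ already have size of order $1$, not $1/4$. The fix --- and this is what the paper does --- is to use the \emph{quantitative} smallness $\|\nabla\Phi_i-\Id\|_0\lesssim \ell^{2\alpha}$ (which you derived in the first paragraph) rather than the crude bound $1/2$. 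Then one writes
\[
\tilde R_{q,i}-\Id = \nabla\Phi_i\Bigl(\tfrac{R_{q,i}}{\rho_{q,i}}-\Id\Bigr)\nabla\Phi_i^T + \bigl(\nabla\Phi_i\nabla\Phi_i^T-\Id\bigr),
\]
and bounds each term by $C\ell^{\alpha}$, which is $\leq 1/2$ for $a$ large. The statement \eqref{e:Phi-close-to-id} with the specific constant $1/2$ is recorded in the lemma because it is used elsewhere (e.g., to bound $\|(\nabla\Phi_i)^{-1}\|_0$), not because it is the estimate that drives the $B_{1/2}(\Id)$ containment.
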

\begin{proof}[Proof of Lemma~\ref{l:R_in_range}]
Note that \eqref{e:rho_range} is a trivial consequence of estimate \eqref{e:voverline_energy_error} and the inequality $4\delta_{q+2}\leq \delta_{q+1}$.  Note that by the definition of the cut-off functions $\eta_i$ 
\begin{equation}\label{e:sum_eta_range}
c_0 \leq \sum_i \int_{\T^3} \eta_i^2(y,t)\,dy \leq 2
\end{equation}
and hence we obtain \eqref{e:rho_i_bnd}. Since $\abs{\nabla^N \eta_j}\lesssim 1$, the bound \eqref{e:rho_i_bnd_N} also follows. 

Next, note that by applying \eqref{e:overline_v_N} and  \eqref{e:Dphi_near_id} we obtain
\begin{equation*}
\norm{\nabla\Phi_i-\Id}_0 \lesssim \tau_q  \delta_q^{\sfrac 12} \lambda_q  = \ell^{2\alpha}.
\end{equation*}
Furthermore, by definition we have
\begin{align*}
\tilde R_{q,i}-\Id&=\nabla\Phi_i\left(\frac{R_{q,i}}{\rho_{q,i}}-\Id\right)\nabla\Phi_i^T+\nabla\Phi_i \nabla\Phi_i^T-\Id\\
&=\nabla\Phi_i\frac{\eta_i^2\mathring{\overline R}_{q}}{\rho_{q,i}}\nabla\Phi_i^T+\nabla\Phi_i\nabla\Phi_i^T-\Id
\end{align*}
Using \eqref{e:overline_R} we see that
$$
\left| \frac{\eta_i^2\mathring{\overline R}_{q}}{\rho_{q,i}}\right|\lesssim \frac{1}{\delta_{q+1}}\left|\mathring{\overline R}_{q}\right|\lesssim \ell^{\alpha}.
$$
Consequently we obtain
$$
|\tilde R_{q,i}-\Id|\lesssim \ell^{\alpha}
$$
so that, choosing $a$ sufficiently large, we ensure that $\tilde R_{q,i}(x,t)$ is contained in the ball of symmetric matrices $B_{\sfrac12}(\Id)$.

Finally, to prove \eqref{e:rho_i_bnd_t} we first note that
\begin{equation*}
\abs{\frac{d}{dt} \int \abs{\overline v_{q}(x,t)}^2\,dx}= \abs{2\int \nabla \overline v_q\cdot \mathring{\overline R_q}\,dx }\lesssim \delta_{q+1}\delta_q^{\sfrac 12}\lambda_q
\end{equation*}
Thus 
\[
 \norm{\partial_t \rho_{q}}_0\lesssim \delta_{q+1}\delta_q^{\sfrac 12}\lambda_q
\]
Then, since $\|\partial_t \eta_j\|_N \lesssim \tau_q^{-1}$ and $\delta_q^{\sfrac 12}\lambda_q\leq \tau_q^{-1}$, 
using \eqref{e:sum_eta_range}, the estimate \eqref{e:rho_i_bnd_t} follows.
\end{proof}

\subsection{The perturbation and the constant \texorpdfstring{$M$}{M}}
The principal term of the perturbation can be written as
\begin{equation}
w_{o}:=\sum_i  \left(\rho_{q,i}(x,t)\right)^{\sfrac12} (\nabla\Phi_i)^{-1} W(\tilde R_{q,i}, \lambda_{q+1}\Phi_i) = \sum_i w_{o,i}\, ,
\label{e:w0_decomp}
\end{equation}
where Lemma \ref{l:Mikado} is applied with $\mathcal{N} = \overline{B}_{\sfrac12} (\Id)$, namely the closed ball (in the space of symmetric $3\times 3$ matrices) of radius $\sfrac{1}{2}$ centered at the identity matrix.

From Lemma \ref{l:R_in_range} it follows that $W(\tilde R_{q,i}, \lambda_{q+1}\Phi_i)$ is well defined. Using the Fourier series representation of the Mikado flows \eqref{e:Mikado_Fourier} we obtain
\begin{equation*}
w_{o,i}:=\sum_{k\neq 0} \left(\rho_{q,i}(x,t)\right)^{\sfrac12} a_k(\tilde R_{q,i})(\nabla\Phi_i)^{-1} A_k e^{i\lambda_{q+1}k\cdot \Phi_i}. 
\end{equation*}
The choice of $w_{o}$ is motivated by the fact that  the vector fields 
\begin{equation*}
U_{i,k}:=(\nabla\Phi_i)^{-1} A_k e^{i\lambda_{q+1}k\cdot \Phi_i}
\end{equation*}
 are Lie-advected by the flow $\overline v_q$:
\begin{equation}\label{e:Lie_advection}
\left(\partial_t +\overline v_q\cdot \nabla\right) U_{i,k}= (D \overline v_q)^T U_{i,k}\,,
\end{equation} 
and thus remain divergence free. For notational convenience we set
\begin{equation*}
b_{i,k}(x,t):= \left(\rho_{q,i}(x,t)\right)^{\sfrac12} a_k(\tilde R_{q,i}(x,t))A_k
\end{equation*}
so that we may write
\begin{equation*}
w_{o,i}:=\sum_{k \neq 0} (\nabla\Phi_i)^{-1} b_{i,k} e^{i\lambda_{q+1}k\cdot \Phi_i}. 
\end{equation*}

The following is a crucial point of our construction, which ensures that the constant $M$ of Proposition \ref{p:main}
is geometric and in particular independent of all the parameters of the construction.

\begin{lemma}\label{l:choice_of_M}
There is a geometric constant $\bar M$ such that
\begin{equation}\label{e:barM}
\|b_{i,k}\|_0 \leq \frac{\bar M}{|k|^5} \delta_{q+1}^{\sfrac{1}{2}}\, .
\end{equation}
\end{lemma}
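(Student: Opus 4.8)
The plan is to estimate $b_{i,k}$ pointwise by factoring it into its three constituent pieces and bounding each separately, using only the $C^0$ information already established. Recall
\[
b_{i,k}(x,t)= \left(\rho_{q,i}(x,t)\right)^{\sfrac12} a_k\bigl(\tilde R_{q,i}(x,t)\bigr) A_k,
\]
so that $\abs{b_{i,k}(x,t)} \leq \left(\rho_{q,i}(x,t)\right)^{\sfrac12}\, \abs{a_k\bigl(\tilde R_{q,i}(x,t)\bigr)}\, \abs{A_k}$, and we recall from the construction of the Mikado flows that $\abs{A_k}=1$ for every $k$.

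First I would use Lemma~\ref{l:R_in_range}, which guarantees $\tilde R_{q,i}(x,t)\in B_{\sfrac12}(\Id)\subset \overline B_{\sfrac12}(\Id) = \mathcal{N}$ for all $(x,t)$; hence $a_k(\tilde R_{q,i}(x,t))$ is well defined and, by the decay estimate \eqref{e:a_k_est} applied with this specific choice of $\mathcal{N}$, with $N=0$ and $m=5$, one has $\abs{a_k(\tilde R_{q,i}(x,t))}\leq C(\mathcal{N},0,5) \abs{k}^{-5}$. Next I would invoke the bound \eqref{e:rho_i_bnd}, namely $\norm{\rho_{q,i}}_0 \leq \delta_{q+1}/c_0$, to get $\left(\rho_{q,i}(x,t)\right)^{\sfrac12}\leq c_0^{-\sfrac12}\delta_{q+1}^{\sfrac12}$ (note $\rho_{q,i}\geq 0$, since $\eta_i^2\geq 0$ and, by \eqref{e:rho_range}, $\rho_q$ has a sign — here it is the absolute value that enters, but what matters is that $\rho_{q,i}^{\sfrac12}$ is controlled by $c_0^{-\sfrac12}\delta_{q+1}^{\sfrac12}$). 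Combining the two estimates gives
\[
\norm{b_{i,k}}_0 \leq \frac{C(\mathcal{N},0,5)}{\sqrt{c_0}}\, \frac{\delta_{q+1}^{\sfrac12}}{\abs{k}^5},
\]
so that the lemma holds with $\bar M := C(\mathcal{N},0,5)/\sqrt{c_0}$.

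The point worth emphasizing — rather than any genuine difficulty — is that $\bar M$ is a \emph{geometric} constant: $c_0$ is the geometric constant fixed in property (v) of the cut-offs $\eta_i$, and $C(\mathcal{N},0,5)$ depends only on the fixed compact set $\mathcal{N}=\overline B_{\sfrac12}(\Id)$ and on the integers $0$ and $5$, none of which depend on $a$, $b$, $q$ or any other parameter of the iteration. This is exactly the choice of $\mathcal{N}$, $N$ and $m$ alluded to in Remark~\ref{r:choice_of_M}, and it is what will ultimately allow the constant $M$ in Proposition~\ref{p:main} to be taken universal. There is no real obstacle; the only care needed is to confirm that the argument $\tilde R_{q,i}$ indeed stays inside the compact set on which $W$ (and hence $a_k$) is defined, which is precisely the content of the last assertion of Lemma~\ref{l:R_in_range}.
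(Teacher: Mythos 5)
Your proof is correct and is essentially identical to the paper's: both apply \eqref{e:a_k_est} with $N=0$, $m=5$ and $\mathcal{N}=\overline{B}_{\sfrac12}(\Id)$ (justified by Lemma~\ref{l:R_in_range}), combine it with \eqref{e:rho_i_bnd}, and obtain $\bar M$ as a geometric constant of the form $\bar C c_0^{-\sfrac12}$. Your additional remark on the sign of $\rho_{q,i}$ is a harmless extra precaution that the paper omits.
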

\begin{proof}[Proof of Lemma~\ref{l:choice_of_M}]
First of all, applying \eqref{e:a_k_est} with $N=0, m=5$ and $\mathcal{N} = \overline{B}_{\sfrac{1}{2}} (\Id)$, we achieve
\[
\|a_{k} (\tilde{R}_{q,i})\|_0 \leq \frac{\bar{C}}{|k|^4}\, ,
\]
where $\bar C$ is a geometric constant (cf. Remark \ref{r:choice_of_M}). Hence, considering the bound \eqref{e:rho_i_bnd}, the constant $\bar M$ is given by $\bar C c_0^{-\sfrac{1}{2}}$.
\end{proof}

We are finally ready to define the constant $M$ of Proposition \ref{p:main}: from Lemma \ref{l:choice_of_M} it follows trivially that the constant is indeed geometric and hence independent of all the parameters entering in the statement of Proposition \ref{p:main}.

\begin{definition}\label{d:choice_of_M}
The constant $M$ is defined as
\[
M = 64 \bar M \sum_{k\in \Z^3\setminus \{0\}} \frac{1}{|k|^4}\, ,
\]
where $\bar M$ is the constant of Lemma \ref{l:choice_of_M}.
\end{definition}

In order to ensure $w_{q+1}$ is divergence free, we correct our principal perturbation $w_o$ by $w_c$, i.e.\ $w_{q+1}=w_o+w_c$ so that $w_{q+1}$ is the curl of  a vector field. In particular, in view of the identity \eqref{e:Mikado_Potential} we define
\begin{align*}
w_{c}&:=\frac{-i}{\lambda_{q+1}}\sum_{i,k\neq 0} \nabla( \left(\rho_{q,i}\right)^{\sfrac12} a_{k}(\tilde R_{q,i}))\times \frac{\nabla\Phi_i^T(k\times A_k)}{\abs{k}^2} e^{i\lambda_{q+1}k\cdot \Phi_i}
=\sum_{i,k\neq 0} c_{i,k} e^{i\lambda_{q+1}k\cdot \Phi_i}\, ,
\end{align*}
where
\[
c_{i,k}(x,t):=\frac{-i}{\lambda_{q+1}}\nabla( \left(\rho_{q,i}(x,t)\right)^{\sfrac12} a_{k}(\tilde R_{q,i}(x,t)))\times \frac{\nabla\Phi_i(x,t)^T(k\times A_k)}{\abs{k}^2}.
\]

Then from \eqref{e:Mikado_Potential} and the identity (see for instance \cite{DaSz2016})
$$
\curl\left(\nabla\Phi^TU(\Phi_i)\right)=\textrm{cof}\, \nabla\Phi^T(\curl U)(\Phi)=\nabla\Phi^{-1}(\curl U)(\Phi)
$$
one can check that
\begin{align}\label{e:w_curl}
w_{q+1} = w_o+w_c=\frac{-1}{\lambda_{q+1}}\curl\left(\sum_{i,k\neq 0} (\nabla\Phi_i)^T\left(\frac{ik\times b_{k,i}}{\abs{k}^2}\right)e^{i\lambda_{q+1}k\cdot \Phi_i}\right)\,.
\end{align}

Upon letting
\begin{align*}
\overline R_q = \sum_{i} R_{q,i}
\end{align*}

\subsection{The final Reynolds stress}
The new Reynolds stress is thus defined as 
\begin{align}
\mathring{R}_{q+1} =  \underbrace{\RR \left( w_{q+1} \cdot \nabla \overline v_q\right)}_{\mbox{Nash error}} + \underbrace{\RR \left( \partial_t  w_{q+1} + \overline v_q \cdot \nabla w_{q+1} \right)}_{\mbox{Transport error}} + \underbrace{\RR \div \left(- {\overline R}_{q} + (w_{q+1} \otimes w_{q+1}) \right)}_{\mbox{Oscillation error}}.
 \label{e:R:q+1} 
\end{align}
Notice that all three terms in \eqref{e:R:q+1} are of the form $\RR f$, where $f$ is either a divergence or a curl, and thus has zero mean.
With this definition and Proposition~\ref{p:R}, one may verify that 
\begin{equation*}
\left\{
\begin{array}{l}
 \partial_t v_{q+1} + \div (v_{q+1} \otimes v_{q+1}) + \nabla p_{q+1} = \div(\mathring{R}_{q+1}) \, ,
\\ \\
 \div v_{q+1} = 0 \, ,
\end{array}\right.
\end{equation*}
where the new pressure is defined by
\begin{equation}\label{e:new_pressure}
p_{q+1} = \bar p_q + |w_{q+1}|^2 - \sum_{i} \rho_{q,i} \, .
\end{equation}

\subsection{Estimates on the perturbation}

\begin{proposition}
\label{p:perturbation}
For $t\in \tilde I_i$ and any $N\geq 0$
\begin{align}
\norm{ (\nabla\Phi_i)^{-1}}_N + \norm{\nabla\Phi_i}_N &\lesssim \ell^{-N} \,,\label{e:phi_N}\\
\norm{\tilde R_{q,i}}_N &\lesssim  \ell^{-N}\,,\label{e:tR_est}\\
\norm{b_{i,k}}_N &\lesssim \delta_{q+1}^{\sfrac12}|k|^{-6}\ell^{-N}\,, \label{e:b_k_est_N}\\
\norm{c_{i,k}}_N &\lesssim  \delta_{q+1}^{\sfrac12}\lambda_{q+1}^{-1}|k|^{-6}\ell^{-N-1}\,.\label{e:c_k_est}
\end{align}
\end{proposition}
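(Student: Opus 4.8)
The plan is to prove the four bounds in the order listed, bootstrapping each from the previous one together with the gluing‐step estimates of Section~\ref{s:gluing} (Propositions~\ref{p:vq:vell}, \ref{p:Rq}, Lemma~\ref{l:R_in_range}, Lemma~\ref{l:cutoffs}) and the classical transport/flow‐map estimates of the Appendix. First, for \eqref{e:phi_N}: differentiating $(\partial_t+\overline v_q\cdot\nabla)\Phi_i=0$ gives the transport equation $(\partial_t+\overline v_q\cdot\nabla)\nabla\Phi_i=-(\nabla\overline v_q)\,\nabla\Phi_i$. For $t\in\tilde I_i$ one has $|t-t_i|\lesssim\tau_q$, and by \eqref{e:vq:1} $\tau_q\norm{\overline v_q}_{1+N}\lesssim\tau_q\delta_q^{\sfrac12}\lambda_q\ell^{-N}=\ell^{2\alpha}\ell^{-N}$; this is precisely the CFL‐type smallness already exploited in Lemma~\ref{l:R_in_range}. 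Commuting spatial derivatives past the material derivative, applying the transport estimate (cf.~\eqref{e:Dphi_near_id}) and Gr\"onwall's inequality therefore yield $\norm{\nabla\Phi_i}_N\lesssim\ell^{-N}$, together with $\norm{\nabla\Phi_i-\Id}_0\lesssim\ell^{2\alpha}$. For the inverse, $\norm{\nabla\Phi_i-\Id}_0\leq\sfrac12$ (Lemma~\ref{l:R_in_range}) gives $\norm{(\nabla\Phi_i)^{-1}}_0\lesssim1$ via a Neumann series, and writing $D^N(\nabla\Phi_i)^{-1}$ as a sum of products of $(\nabla\Phi_i)^{-1}$ and derivatives of $\nabla\Phi_i$ propagates the bound to $\norm{(\nabla\Phi_i)^{-1}}_N\lesssim\ell^{-N}$.

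Next, for \eqref{e:tR_est}: on $\supp\eta_i$ we have $R_{q,i}=\rho_{q,i}\Id-\eta_i^2\mathring{\overline R_q}$, so that $\eta_i^2/\rho_{q,i}=\bigl(\sum_j\int_{\T^3}\eta_j^2\,dy\bigr)/\rho_q$ depends only on $t$ and, by \eqref{e:tildeR_def},
\[
\tilde R_{q,i}=\nabla\Phi_i\,(\nabla\Phi_i)^T-\frac{\sum_j\int_{\T^3}\eta_j^2(y,t)\,dy}{\rho_q(t)}\,\nabla\Phi_i\,\mathring{\overline R_q}\,(\nabla\Phi_i)^T\,.
\]
The key observation is that the scalar prefactor in the second term is a function of $t$ alone, hence invisible to the spatial H\"older norms; by \eqref{e:rho_range} and \eqref{e:sum_eta_range} it is bounded by $\lesssim\delta_{q+1}^{-1}\lambda_q^{\alpha}$. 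Combining this with $\norm{\mathring{\overline R_q}}_N\lesssim\delta_{q+1}\ell^{-N+\alpha}$ (which follows from \eqref{e:Rq:1}), the bound on $\nabla\Phi_i$ just obtained, the Leibniz rule, and the inequality $\lambda_q^{\alpha}\ell^{\alpha}\leq1$ (a consequence of \eqref{e:compare-lambda-ell}), we get $\norm{\tilde R_{q,i}}_N\lesssim\ell^{-N}$. Off $\supp\eta_i$ the tensor $\tilde R_{q,i}$ only ever enters multiplied by $\rho_{q,i}^{\sfrac12}$, which vanishes there, so the restriction to $\supp\eta_i$ is harmless.

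For \eqref{e:b_k_est_N} we write $b_{i,k}=\rho_{q,i}^{\sfrac12}\,a_k(\tilde R_{q,i})\,A_k$ with $|A_k|=1$. Since $\rho_{q,i}^{\sfrac12}=\eta_i\,\bigl(\sum_j\int_{\T^3}\eta_j^2\,dy\bigr)^{-\sfrac12}\rho_q(t)^{\sfrac12}$, and the spatial derivatives of $\eta_i$ are $O(1)$ by Lemma~\ref{l:cutoffs} while $\rho_q^{\sfrac12}\lesssim\delta_{q+1}^{\sfrac12}$ by \eqref{e:rho_range}, we obtain $\norm{\rho_{q,i}^{\sfrac12}}_N\lesssim\delta_{q+1}^{\sfrac12}$ for every $N$. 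For the composition $a_k(\tilde R_{q,i})$ we use that $\tilde R_{q,i}(x,t)\in\overline B_{\sfrac12}(\Id)=\mathcal N$ (Lemma~\ref{l:R_in_range}) and apply the Fa\`a di Bruno chain‐rule estimate: every resulting term is a product of a factor $\norm{D^p_R a_k}_0\lesssim|k|^{-6}$ (by \eqref{e:a_k_est} with $m=6$) and factors $\norm{\tilde R_{q,i}}_{m_\ell}\lesssim\ell^{-m_\ell}$ with $\sum_\ell m_\ell\leq N$, hence $\norm{a_k(\tilde R_{q,i})}_N\lesssim|k|^{-6}\ell^{-N}$. The Leibniz rule then gives \eqref{e:b_k_est_N}. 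Finally, $c_{i,k}=\tfrac{-i}{\lambda_{q+1}}\nabla\!\bigl(\rho_{q,i}^{\sfrac12}a_k(\tilde R_{q,i})\bigr)\times\tfrac{\nabla\Phi_i^T(k\times A_k)}{|k|^2}$, and the same estimates used for \eqref{e:b_k_est_N} give $\norm{\nabla\!\bigl(\rho_{q,i}^{\sfrac12}a_k(\tilde R_{q,i})\bigr)}_N\lesssim\delta_{q+1}^{\sfrac12}|k|^{-6}\ell^{-N-1}$; combining with $|k\times A_k|\leq|k|$, $\norm{\nabla\Phi_i}_N\lesssim\ell^{-N}$, and the prefactor $\lambda_{q+1}^{-1}$, the Leibniz rule yields $\norm{c_{i,k}}_N\lesssim\delta_{q+1}^{\sfrac12}\lambda_{q+1}^{-1}|k|^{-7}\ell^{-N-1}\lesssim\delta_{q+1}^{\sfrac12}\lambda_{q+1}^{-1}|k|^{-6}\ell^{-N-1}$, which is \eqref{e:c_k_est}.

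The only genuinely delicate point is the bookkeeping of the exponents of $\alpha$ in the second step: the denominator $\rho_q$ contributes a potentially dangerous factor $\lambda_q^{\alpha}$, and one must use $\ell\leq\lambda_q^{-1}$ together with the $\ell^{\alpha}$ gain built into the estimate \eqref{e:Rq:1} for $\mathring{\overline R_q}$ to absorb it and recover the clean bound $\ell^{-N}$. The composition estimate for $a_k(\tilde R_{q,i})$ is the main technical ingredient of the last step, but it is routine once \eqref{e:tR_est} is available, since the arbitrarily fast decay of $D^p_R a_k$ in $|k|$ afforded by \eqref{e:a_k_est} leaves ample room. Everything else reduces to the transport estimates of the Appendix and to Leibniz/Fa\`a di Bruno arithmetic.
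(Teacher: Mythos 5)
Your proposal is correct and follows essentially the same route as the paper: the transport/flow-map bounds of the Appendix for $\nabla\Phi_i$ and its inverse, the identity $R_{q,i}/\rho_{q,i}=\Id-\bigl(\sum_j\int\eta_j^2\,dy/\rho_q\bigr)\mathring{\overline R}_q$ with the $\lambda_q^\alpha\ell^\alpha\leq1$ absorption for $\tilde R_{q,i}$, and Leibniz/chain-rule estimates combined with \eqref{e:a_k_est} for $b_{i,k}$ and $c_{i,k}$. You merely fill in details the paper leaves implicit (the Gr\"onwall argument behind \eqref{e:Dphi_N} and the Fa\`a di Bruno bookkeeping for $a_k(\tilde R_{q,i})$), so no further changes are needed.
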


It is important to notice that the symbol $\lesssim$ denotes a dependence of the constants in the estimates from $N$, $\alpha$, $\beta$ and $M$, but not upon $k$ or $a$.

\begin{proof}[Proof of Proposition~\ref{p:perturbation}]
From \eqref{e:overline_v_N}, \eqref{e:Dphi_near_id} and \eqref{e:Dphi_N} we obtain
\begin{equation*}
\norm{\nabla\Phi_i}_N \lesssim 1+\tau_q\norm{D\overline v_q}_N \lesssim  1+ \tau_q\delta_q^{\sfrac 12}\lambda_q \ell^{-N}\, .
\end{equation*}
Using the fact that $\|\nabla\Phi_i - {\rm Id}\|_0 \leq \sfrac{1}{2}$ (see \eqref{e:Phi-close-to-id}), the estimate 
\eqref{e:phi_N}  follows (indeed it gives the slightly better estimate $\lesssim 1 + \ell^{-N+2\alpha}$, but the other is still enough for our purposes).

Recalling property (iv) of $\eta_i$ we see that  $\rho_{q,i}$ is a function of $t$ only on $\supp \mathring{\overline R_q}$, i.e.
$$
\rho_{q,i}(x,t)=\frac{\rho_q(t)}{\sum_j\int_{\T^3}\eta_j^2(y,t)\,dy}.
$$
Thus,
\begin{align}
 \frac{R_{q,i}}{\rho_{q,i}} = \Id -\frac{\sum_j\int_{\T^3}\eta_j^2(y,t)\,dy}{\rho_q(t)} \mathring{\overline R_q},\label{e:R/rho}
\end{align}
so that by \eqref{e:rho_range} and \eqref{e:Rq:1} we obtain
\begin{align}\label{e:ratio_later_needed}
 \norm{\frac{R_{q,i}}{\rho_{q,i}}}_N \lesssim 1+ \frac{\lambda_q^\alpha}{\delta_{q+1}} \norm{ \mathring{\overline R_q}}_N\ell^{-N} \lesssim \ell^{-N},
\end{align}
where we have applied the crude estimate $\lesssim 1+ \|\mathring{\overline R_q}\|_{N+\alpha} \lambda_q^\alpha \delta_{q+1}^{-1} \lesssim 1+ \ell^{-N +\alpha} \lambda_q^\alpha \lesssim \ell^{-N}$.

Therefore, using Lemma \ref{l:R_in_range} and property (v):
\begin{align*}
\norm{\tilde R_{q,i}}_N \lesssim &\norm{\nabla\Phi_i}_N\norm{\nabla\Phi_i}_0+ \norm{\frac{R_{q,i}}{ \rho_{q,i}}}_N 
\lesssim  \norm{\nabla\Phi_i}_N\norm{\nabla\Phi_i}_0+ \ell^{-N}\, .
\end{align*}
The estimate \eqref{e:tR_est} then follows from \eqref{e:phi_N}. 

The estimate \eqref{e:b_k_est_N} follows as a consequence of \eqref{e:a_k_est}, \eqref{e:rho_i_bnd} and \eqref{e:tR_est}. The estimate \eqref{e:c_k_est} follows as a consequence of \eqref{e:a_k_est}, \eqref{e:rho_i_bnd}, \eqref{e:phi_N} and \eqref{e:tR_est}.
\end{proof}

\begin{corollary}
\label{c:perturbation}
Assuming $a$ is sufficiently large, the perturbations $w_o$, $w_c$ and $w_q$ satisfy the following estimates
\begin{align}
\norm{w_o}_0 +\frac{1}{\lambda_{q+1}}\norm{w_o}_1 &\leq \frac{M}{4}\delta_{q+1}^{\sfrac 12}\label{e:w_o_est}\\
\norm{w_c}_0+\frac{1}{\lambda_{q+1}} \norm{w_c}_1 &\lesssim \delta_{q+1}^{\sfrac 12}\ell^{-1}\lambda_{q+1}^{N-1}\label{e:w_c_est}\\
\norm{w_{q+1}}_0 +\frac{1}{\lambda_{q+1}}\norm{w_{q+1}}_1 &\leq  \frac{M}{2} \delta_{q+1}^{\sfrac 12}\label{e:w_est}
\end{align}
where the constant $M$ depends solely on the constant $c_0$ in \eqref{e:sum_eta_range}.
In particular, we obtain \eqref{e:outline_v_diff}.
\end{corollary}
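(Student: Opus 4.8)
The plan is to estimate each of the three building blocks $w_o$, $w_c$ and $w_{q+1}=w_o+w_c$ in the $C^0$ and $C^1$ norms by exploiting the Fourier decomposition in terms of Mikado phases together with the bounds of Proposition~\ref{p:perturbation}. The key elementary tool will be the stationary-phase--type estimate for sums of the form $\sum_k f_{i,k}(x) e^{i\lambda_{q+1} k\cdot\Phi_i(x)}$: since each summand carries a rapidly decaying amplitude $|f_{i,k}|\lesssim |k|^{-m}$ and the phases $\lambda_{q+1}k\cdot\Phi_i$ have gradient $\gtrsim \lambda_{q+1}|k|$ (because $\nabla\Phi_i$ is close to the identity by \eqref{e:Phi-close-to-id}), one controls the $C^0$ norm by $\sum_k \|f_{i,k}\|_0$ and the $C^1$ norm by $\lambda_{q+1}\sum_k |k| \|f_{i,k}\|_0 + \sum_k \|f_{i,k}\|_1$, the first term dominating once $a$ is large. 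Crucially, since the supports of $\eta_i$ (and hence of $\rho_{q,i}$, $w_{o,i}$) are pairwise disjoint in time by property (ii), at any fixed $(x,t)$ only one index $i$ contributes, so the sum over $i$ does not accumulate constants.

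First I would prove \eqref{e:w_o_est}. For the $C^0$ bound one writes $w_o=\sum_i w_{o,i}$ with disjoint temporal supports, so $\|w_o\|_0 = \sup_i \|w_{o,i}\|_0 \leq \sup_i \sum_{k\neq 0} \|(\nabla\Phi_i)^{-1}\|_0 \|b_{i,k}\|_0$; by \eqref{e:phi_N} with $N=0$ one has $\|(\nabla\Phi_i)^{-1}\|_0\leq 2$ (using \eqref{e:Phi-close-to-id}), and by Lemma~\ref{l:choice_of_M}, $\|b_{i,k}\|_0\leq \bar M |k|^{-5}\delta_{q+1}^{\sfrac12}$. Actually it is cleaner to use the sharper pointwise constant from Lemma~\ref{l:choice_of_M} rather than the $|k|^{-6}\ell^{-N}$ bound of \eqref{e:b_k_est_N} precisely here, so that the resulting constant is $2\bar M\sum_k |k|^{-5}\delta_{q+1}^{\sfrac12}$, which is bounded by $\tfrac{1}{16}M\delta_{q+1}^{\sfrac12}$ by Definition~\ref{d:choice_of_M} (using $\sum|k|^{-5}\leq \sum|k|^{-4}$ and the explicit factor $64$). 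For the $C^1$ bound I differentiate $w_{o,i}=\sum_k (\nabla\Phi_i)^{-1} b_{i,k} e^{i\lambda_{q+1}k\cdot\Phi_i}$; the leading term comes from differentiating the exponential, giving $\lambda_{q+1}\sum_k |k| \|\nabla\Phi_i\|_0 \|(\nabla\Phi_i)^{-1}\|_0 \|b_{i,k}\|_0 \lesssim \lambda_{q+1}\delta_{q+1}^{\sfrac12}\sum_k |k|^{-4}$, while the other two terms (derivative hitting $(\nabla\Phi_i)^{-1}$ or $b_{i,k}$) are bounded using \eqref{e:phi_N} and \eqref{e:b_k_est_N} with $N=1$ by $\lesssim \delta_{q+1}^{\sfrac12}\ell^{-1}\sum_k |k|^{-6} \ll \lambda_{q+1}\delta_{q+1}^{\sfrac12}$ since $\ell^{-1}\leq \lambda_q^{3/2}\ll\lambda_{q+1}$. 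Collecting and using Definition~\ref{d:choice_of_M} (the factor $64$ absorbs the various geometric constants $2$, and the contribution of the non-leading terms is made arbitrarily small by choosing $a$ large) yields \eqref{e:w_o_est}.

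Next, \eqref{e:w_c_est} is proved the same way from $w_c=\sum_{i,k} c_{i,k} e^{i\lambda_{q+1}k\cdot\Phi_i}$: the $C^0$ norm is $\lesssim \sum_k \|c_{i,k}\|_0 \lesssim \delta_{q+1}^{\sfrac12}\lambda_{q+1}^{-1}\ell^{-1}\sum_k |k|^{-6}$ by \eqref{e:c_k_est} with $N=0$, and the $C^1$ norm gains a factor $\lambda_{q+1}|k|$ from the phase, giving $\lesssim \delta_{q+1}^{\sfrac12}\ell^{-1}\sum_k |k|^{-5}$, hence $\|w_c\|_0+\lambda_{q+1}^{-1}\|w_c\|_1\lesssim \delta_{q+1}^{\sfrac12}\ell^{-1}\lambda_{q+1}^{-1}$. (The statement \eqref{e:w_c_est} carries a typo-looking factor $\lambda_{q+1}^{N-1}$; with $N=0$ this is $\lambda_{q+1}^{-1}$, consistent with what is obtained.) Then \eqref{e:w_est} follows immediately: $\|w_{q+1}\|_0+\lambda_{q+1}^{-1}\|w_{q+1}\|_1 \leq (\|w_o\|_0+\lambda_{q+1}^{-1}\|w_o\|_1) + (\|w_c\|_0+\lambda_{q+1}^{-1}\|w_c\|_1) \leq \tfrac{M}{4}\delta_{q+1}^{\sfrac12} + C\delta_{q+1}^{\sfrac12}\ell^{-1}\lambda_{q+1}^{-1} \leq \tfrac{M}{2}\delta_{q+1}^{\sfrac12}$ once $a$ is large enough that $C\ell^{-1}\lambda_{q+1}^{-1}\leq \tfrac{M}{4}$ (recall $\ell^{-1}\lambda_{q+1}^{-1}\lesssim \lambda_q^{3/2}\lambda_{q+1}^{-1}\to 0$). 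Finally \eqref{e:outline_v_diff} is literally the statement that $\|v_{q+1}-\overline v_q\|_0+\lambda_{q+1}^{-1}\|v_{q+1}-\overline v_q\|_1 = \|w_{q+1}\|_0+\lambda_{q+1}^{-1}\|w_{q+1}\|_1\leq \tfrac{M}{2}\delta_{q+1}^{\sfrac12}$, so \eqref{e:w_est} gives it directly.

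The main obstacle is bookkeeping rather than conceptual: one must track the geometric constants carefully enough to see that they are absorbed by the explicit factor $64$ (and the $|k|^{-5}$ versus $|k|^{-4}$ slack) in Definition~\ref{d:choice_of_M}, so that $\|w_{o}\|_0$ alone comes in strictly below $\tfrac{M}{4}\delta_{q+1}^{\sfrac12}$, leaving room for the $w_o$-gradient term and the entire $w_c$ contribution, each of which is made small by taking $a$ large. A minor subtlety is that for the $C^0$ estimate of $w_o$ one should invoke the sharp pointwise bound \eqref{e:barM} of Lemma~\ref{l:choice_of_M} (which is $a$-independent and geometric) instead of the coarser \eqref{e:b_k_est_N}, since it is precisely this sharp constant that was used to define $M$; all the other, non-leading estimates may freely use the coarser bounds since they are being sent to zero anyway.
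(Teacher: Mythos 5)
Your argument follows the paper's proof essentially step for step: disjoint supports of the $w_{o,i}$, the pointwise bound $\|(\nabla\Phi_i)^{-1}\|_0\leq 2$ from \eqref{e:Phi-close-to-id}, the sharp geometric bound \eqref{e:barM} for the leading $C^0$ and $C^1$ contributions (so that Definition \ref{d:choice_of_M} closes the constant), the coarser bounds \eqref{e:b_k_est_N}, \eqref{e:phi_N}, \eqref{e:c_k_est} for the subleading terms, and finally absorption of everything proportional to $(\ell\lambda_{q+1})^{-1}$ by taking $a$ large. Your reading of \eqref{e:w_c_est} (the exponent $N-1$ should be $-1$) and the derivation of \eqref{e:outline_v_diff} from \eqref{e:w_est} are both correct.

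There is, however, one step that would fail as written: you justify the smallness of the subleading terms by ``$\ell^{-1}\leq\lambda_q^{3/2}\ll\lambda_{q+1}$'' and ``$\ell^{-1}\lambda_{q+1}^{-1}\lesssim\lambda_q^{3/2}\lambda_{q+1}^{-1}\to 0$''. Since $\lambda_{q+1}\approx\lambda_q^{b}$ and \eqref{e:b_beta_rel} forces $b<\tfrac{1-\beta}{2\beta}$, which is close to $1$ when $\beta$ is close to $\sfrac13$, the inequality $\lambda_q^{3/2}\ll\lambda_{q+1}$ is false precisely in the regime of interest; the crude comparison \eqref{e:compare-lambda-ell} is too lossy here. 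The conclusion you need, $(\ell\lambda_{q+1})^{-1}\ll 1$ (indeed $\lesssim\delta_{q+1}^{\sfrac12}$ up to a factor going to zero), is true, but it must be extracted from the exact definition \eqref{e:ell_def}: one computes
\begin{equation*}
(\ell\lambda_{q+1})^{-1}=\frac{\delta_q^{\sfrac12}\lambda_q^{1+\sfrac{3\alpha}{2}}}{\delta_{q+1}^{\sfrac12}\lambda_{q+1}}=\frac{\lambda_q^{1-\beta+\sfrac{3\alpha}{2}}}{\lambda_{q+1}^{1-\beta}}\,,
\end{equation*}
which tends to zero as $a\to\infty$ (uniformly in $q$) if and only if $b(1-\beta)>1-\beta+\sfrac{3\alpha}{2}$, i.e.\ only after $\alpha$ has been chosen small in terms of $b-1$ and $\beta$. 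So the absorption is not purely a matter of ``$a$ large'': it imposes a constraint on $\alpha$ that belongs in the hypotheses (this is exactly the condition $b>\sfrac{(1-\beta+3\alpha/2)}{(1-\beta)}$ the paper records). With that correction your proof is complete.
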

\begin{proof}[Proof of Corollary~\ref{c:perturbation}]
Taking into account \eqref{e:Phi-close-to-id}, we conclude $\|(\nabla \Phi_i)^{-1}\|_0 \leq 2$ on $\supp (\eta_i)$.
Thus, taking into account that the $w_{o,i}$ have disjoint supports, from Lemma \ref{l:choice_of_M} we conclude
\begin{equation}\label{e:M_0}
\|w_o\| \leq 2 \delta_{q+1}^{\sfrac{1}{2}} \sum_{k\neq 0} \frac{\bar M}{|k|^4} \leq \frac{M}{32}\, .
\end{equation}
To estimate $\norm{w_o}_1$ we observe first that
\begin{equation}\label{e:exp_est}
\norm{\nabla (e^{i\lambda_{q+1}k\cdot \Phi_i})}_0\leq \lambda_{q+1}\abs{k}\norm{\nabla\Phi_i}_{0}\leq 2\lambda_{q+1}\abs{k}\,.
\end{equation}
Compute now 
\[
\nabla w_{o,i} = \sum_k (\nabla \Phi_i)^{-1} b_{i,k} \nabla (e^{i \lambda_{q+1} k\cdot \Phi_i}) +
\sum_k \nabla ((\nabla \Phi_i)^{-1} b_{i,k}) e^{i \lambda_{q+1} k\cdot \Phi_i}\, .
\]
In particular, from \eqref{e:exp_est}, Lemma \ref{l:choice_of_M} and Proposition \ref{p:perturbation} (taking into account that the supports of the $w_{o,i}$ are disjoint), we conclude 
\begin{align}
\|\nabla w_{o}\|_0 \leq & 4 \delta_{q+1}^{\sfrac{1}{2}} \lambda_{q+1}  \sum_{k\neq 0} \frac{\bar M}{|k|^4} + C \delta_{q+1}^{\sfrac{1}{2}} \ell^{-1} \sum_{k\neq 0} \frac{1}{|k|^6} \leq \frac{M}{16} \delta_{q+1}^{\sfrac{1}{2}} \lambda_{q+1} + \bar C \delta_{q+1}^{\sfrac{1}{2}} \ell^{-1}\, ,\label{e:M_1}
\end{align}
where the constant $\bar C$ depends upon $\beta, \alpha$ and $M$, but not upon $a$. In particular, summing \eqref{e:M_0} and \eqref{e:M_1} we achieve
\begin{equation}\label{e:still_M}
\|w_o\|_0 + \lambda_{q+1}^{-1} \|w_o\|_1 \leq \frac{M}{8} \delta_{q+1}^{\sfrac{1}{2}} + \bar C (\lambda_{q+1} \ell)^{-1}\, .
\end{equation}

By our definition of the various parameters we get
\[
(\ell \lambda_{q+1})^{-1} = \frac{\delta_q^{\sfrac{1}{2}} \lambda_q^{1+\sfrac{3\alpha}{2}}}{\delta_{q+1}^{\sfrac{1}{2}} \lambda_{q+1}} = \frac{\lambda_q^{1-\beta + \sfrac{3\alpha}{2}}}{\lambda_{q+1}^{1-\beta}}
\leq \tilde C \delta_{q+1}^{\sfrac{1}{2}} a^{b^q (1-\beta + \sfrac{3\alpha}{2}) - b^{q+1} (1-\beta)}\, 
\]
where the constant $\tilde C$ depends on \eqref{e:bloody_integers}. Having chosen $\alpha$ small enough so that $b> \sfrac{1-\beta + 3\alpha/2}{1-\beta}$, for $a$ sufficiently large we achieve that the right hand side of \eqref{e:still_M} is smaller than $\sfrac{M}{4} \delta_{q+1}^{\sfrac{1}{2}}$.

The estimate \eqref{e:w_c_est} follows as a direct consequence of \eqref{e:c_k_est}  and \eqref{e:exp_est}.

Combining \eqref{e:w_o_est} and \eqref{e:w_c_est} we achieve 
\begin{equation}\label{e:not_fishy}
\norm{w_{q+1}}_0 +\frac{1}{\lambda_{q+1}}\norm{w_{q+1}}_1 \leq \delta_{q+1}^{\sfrac{1}{2}}
\left(\frac{M}{4} + C (\ell \lambda_{q+1})^{-1}\right)\, ,
\end{equation}
where the constant $C$  depends upon $\beta, \alpha$ and $M$, but not upon $a$. Hence, arguing as above,  if $b> \sfrac{1-\beta + 3\alpha/2}{1-\beta}$ then \eqref{e:w_est} holds for $a$ sufficiently large (depending on $\beta, \alpha$ and $M$).
\end{proof}
Let us define $D_{t,q} : =\partial_t +\overline v_q\cdot \nabla$ to be the material derivative associated with $\overline v_q$. We then have
\begin{proposition}
For $t\in \tilde I_i$ and $N\geq 0$ we have
\label{p:D_t_estimates}
\begin{align}
\norm{D_{t,q} \nabla\Phi_i}_N &\lesssim  \delta_q^{\sfrac 12}\lambda_q  \ell^{-N}  \label{e:D_t:phi_N}\\
\norm{D_{t,q} \tilde R_{q,i}}_N &\lesssim  \tau_q^{-1}\ell^{-N}\label{e:D_t:tR}\\
\norm{D_{t,q} c_{i,k}}_N &\lesssim  \delta_{q+1} ^{\sfrac 12}\tau_q^{-1}\ell^{-N-1}\lambda_{q+1}^{-1}|k|^{-6} \, .\label{e:D_t:c_k}
\end{align}

\end{proposition}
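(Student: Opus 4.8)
The plan is to prove the three bounds in the order stated, expressing each object through the previous ones and using repeatedly the elementary commutator identity $D_{t,q}\nabla F=\nabla(D_{t,q}F)-(\nabla\overline v_q)^T\nabla F$, valid for any scalar or tensor field $F$. For \eqref{e:D_t:phi_N}, differentiating the transport equation $D_{t,q}\Phi_i=0$ in space gives the pointwise identity $D_{t,q}\partial_m\Phi_i^j=-\partial_m\overline v_q^k\,\partial_k\Phi_i^j$, i.e. $D_{t,q}\nabla\Phi_i=-(\nabla\overline v_q)(\nabla\Phi_i)$; the Leibniz rule for H\"older norms together with \eqref{e:vq:1} and \eqref{e:phi_N} then yields $\|D_{t,q}\nabla\Phi_i\|_N\lesssim\|\overline v_q\|_{1+N}\|\nabla\Phi_i\|_0+\|\overline v_q\|_1\|\nabla\Phi_i\|_N\lesssim\delta_q^{\sfrac12}\lambda_q\ell^{-N}$.

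For \eqref{e:D_t:tR} I would start from the representation \eqref{e:R/rho}, which on $\supp\eta_i$ reads
\[
\tilde R_{q,i}=\nabla\Phi_i(\nabla\Phi_i)^T-g(t)\,\nabla\Phi_i\,\mathring{\overline R}_q\,(\nabla\Phi_i)^T\,,\qquad g(t):=\frac{\sum_j\int_{\T^3}\eta_j^2(y,t)\,dy}{\rho_q(t)}\,,
\]
where $g$ depends on $t$ alone, so $D_{t,q}g=g'$. Expanding $D_{t,q}\tilde R_{q,i}$ by the Leibniz rule, the material derivative falls on a factor $\nabla\Phi_i$ (use \eqref{e:D_t:phi_N} and \eqref{e:phi_N}), on $\mathring{\overline R}_q$ (use \eqref{e:Rq:Dt} and \eqref{e:Rq:1}, recalling $\partial_t+\overline v_q\cdot\nabla=D_{t,q}$), or on $g$ (bound $|g'|\lesssim|\rho_q|^{-1}\sup_j\|\partial_t\eta_j\|_0+|\rho_q|^{-2}\|\partial_t\rho_q\|_0\lesssim\lambda_q^{\alpha}\delta_{q+1}^{-1}\tau_q^{-1}$ via \eqref{e:rho_range}, \eqref{e:rho_t} and Lemma~\ref{l:cutoffs}). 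Each resulting term is $\lesssim\tau_q^{-1}\ell^{-N}$ once the spare factors are absorbed using $(\lambda_q\ell)^\alpha\le1$ (from \eqref{e:compare-lambda-ell}) and $\delta_q^{\sfrac12}\lambda_q\le\tau_q^{-1}$ (from \eqref{e:tau_def} and \eqref{e:compare-lambda-ell}); in particular the spurious $\lambda_q^{\alpha}$ coming from the lower bound for $|\rho_q|$ is compensated by an $\ell^{\alpha}$ present in the $\mathring{\overline R}_q$ estimates.

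For \eqref{e:D_t:c_k}, write $f_{i,k}:=(\rho_{q,i})^{\sfrac12}a_k(\tilde R_{q,i})$, so $b_{i,k}=f_{i,k}A_k$ and $c_{i,k}=-i\lambda_{q+1}^{-1}|k|^{-2}\,\nabla f_{i,k}\times(\nabla\Phi_i)^T(k\times A_k)$. I would first prove the intermediate bound $\|D_{t,q}f_{i,k}\|_N\lesssim\delta_{q+1}^{\sfrac12}\tau_q^{-1}|k|^{-6}\ell^{-N}$: by the chain rule
\[
D_{t,q}f_{i,k}=\tfrac12(\rho_{q,i})^{-\sfrac12}(D_{t,q}\rho_{q,i})\,a_k(\tilde R_{q,i})+(\rho_{q,i})^{\sfrac12}\,Da_k(\tilde R_{q,i})\cdot D_{t,q}\tilde R_{q,i}\,,
\]
and in the first summand the singular prefactor $(\rho_{q,i})^{-\sfrac12}=\eta_i^{-1}(\rho_q/h)^{-\sfrac12}$ (with $h:=\sum_j\int_{\T^3}\eta_j^2\,dy$) exactly cancels the $\eta_i$ produced by $D_{t,q}\rho_{q,i}=2\eta_i(D_{t,q}\eta_i)\rho_q/h+\eta_i^2(\rho_q/h)'$, so one concludes using \eqref{e:rho_range}, \eqref{e:rho_t}, \eqref{e:a_k_est}, \eqref{e:D_t:tR}, Lemma~\ref{l:cutoffs} and the auxiliary bound $\|D_{t,q}\eta_i\|_N\lesssim\tau_q^{-1}\ell^{-N}$ (itself from Lemma~\ref{l:cutoffs} and \eqref{e:vq:1}). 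Given this, commuting $D_{t,q}$ past the gradient as above and combining with \eqref{e:D_t:phi_N}, \eqref{e:phi_N}, \eqref{e:b_k_est_N} and the Leibniz rule gives \eqref{e:D_t:c_k}; the commutator term $(\nabla\overline v_q)^T\nabla f_{i,k}$ only contributes a factor $\|\overline v_q\|_1\lesssim\delta_q^{\sfrac12}\lambda_q\lesssim\tau_q^{-1}$ times the size of $c_{i,k}$, hence is of the required order.

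The main difficulty here is not conceptual but lies in the bookkeeping for the last two estimates: one must carefully track every power of $\ell,\lambda_q,\lambda_{q+1},\delta_q,\delta_{q+1},\tau_q$, and in particular verify that the $\lambda_q^{\alpha}$ generated whenever one divides by $\rho_q$ (because of $|\rho_q|\gtrsim\delta_{q+1}\lambda_q^{-\alpha}$ in \eqref{e:rho_range}) is always paired with a compensating $\ell^{\alpha}$, so that it is killed by $(\lambda_q\ell)^\alpha\le1$.
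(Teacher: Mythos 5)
Your proposal is correct and follows essentially the same route as the paper: the transport identity $D_{t,q}\nabla\Phi_i=-\nabla\Phi_i\, D\overline v_q$ for the first bound, a Leibniz expansion of \eqref{e:tildeR_def} via \eqref{e:R/rho} for the second (the paper merely packages the middle factor as a separate estimate on $D_{t,q}(\rho_{q,i}^{-1}R_{q,i})$), and the chain/Leibniz rule for $c_{i,k}$ for the third. Your explicit treatment of the $\rho_{q,i}^{-\sfrac12}$ factor through the factorization $\rho_{q,i}^{\sfrac12}=\eta_i(\rho_q/h)^{\sfrac12}$, and the tracking of the compensating $\ell^{\alpha}$ against each $\lambda_q^{\alpha}$ from \eqref{e:rho_range}, correctly fills in details that the paper only asserts for \eqref{e:D_t:c_k}.
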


\begin{proof}[Proof of Proposition~\ref{p:D_t_estimates}]
Observe that
\[D_{t,q} \nabla\Phi_i =-\nabla\Phi_i D\overline v_q\]
In particular, 
\[
\|D_{t,q} \nabla\Phi_i\|_N \lesssim \|\nabla\Phi_i\|_0 \|\overline v_q\|_{N+1} + \|\nabla\Phi_i\|_{N} \|\overline v_q\|_1 \,.
\]
Thus \eqref{e:D_t:phi_N} follows from  \eqref{e:vq:1} and \eqref{e:phi_N}. 
Next, we observe that 
\[
D_{t,q} \rho_{q,i} = \partial_t \rho_{q,i} + \bar{v}_q \cdot \nabla \rho_{q,i}
\]
and thus we can estimate
\[
\|D_{t,q} \rho_{q,i}\|_N \lesssim \|\partial_t \rho_{q,i}\|_N + \|\rho_{q,i}\|_{N+1} \|\bar v_q\|_0 + \|\bar v_q\|_N \|\rho_{q,i}\|_1.
\]
Recall that $\|\bar v_q\|_0 \leq \|v_\ell\|_0 + \|v_\ell - v_q\|_0 \lesssim 1 \lesssim \tau_q^{-1}$ and so from \eqref{e:vq:1} we conclude $\|\bar v_q\|_N \leq \tau_q^{-1} \ell^{-N}$. Combining the latter estimate with \eqref{e:rho_i_bnd_N} and \eqref{e:rho_i_bnd_t} we achieve 
\begin{equation}
\|D_{t,q} \rho_{q,i}\|_N \lesssim \delta_{q+1} \tau_q^{-1} \ell^{-N}\, .
\end{equation}
Differentiating \eqref{e:R/rho} we have
\begin{equation}
D_{t,q} (\rho_{q,i}^{-1} R_{q,i}) = - \left(\partial_t \frac{\sum_j\int_{\T^3}\eta_j^2(y,t)\,dy}{\rho_q(t)}\right) \mathring{\overline R_q} - \frac{\sum_j\int_{\T^3}\eta_j^2(y,t)\,dy}{\rho_q(t)} D_{t,q} \mathring{\overline R}_q \, .
\end{equation}
Thus we can estimate, using \eqref{e:Rq:1} and \eqref{e:Rq:Dt}: 
\begin{align}
\|D_{t,q} (\rho_{q,i}^{-1}R_{q,i})\|_N 
&\lesssim \delta_{q+1}^{-1} \delta_q^{\sfrac{1}{2}} \lambda_q^{1+2\alpha} \|\mathring{\overline R_q}\|_N + \tau_q^{-1} \delta_{q+1}^{-1} \lambda_q^\alpha \|\mathring{\overline R_q}\|_N + \delta_{q+1}^{-1} \lambda_q^\alpha \|D_{t,q} \mathring{\overline R_q}\|_N\notag\\
&\lesssim  \delta_q^{\sfrac{1}{2}} \lambda_q^{1+2\alpha} \ell^{-N+\alpha} + \tau_q^{-1} \lambda_q^\alpha \ell^{-N+\alpha} + \lambda_q^\alpha \delta_q^{\sfrac{1}{2}} \lambda_q \ell^{-N-\alpha}\lesssim
\tau_q^{-1} \ell^{-N}\, .\label{e:Dt_ratio}
\end{align}

Differentiating \eqref{e:tildeR_def} we achieve
\[
D_{t,q}  \tilde R_{q,i} = D_{t,q} \nabla\Phi_i (\rho_{q,i}^{-1} R_{q,i}) \nabla\Phi_i^T +
\nabla\Phi_i D_{t,q} (\rho_{q,i}^{-1} R_{q,i}) \nabla\Phi_i^T + \nabla\Phi_i (\rho_{q,i}^{-1} R_{q,i})(D_{t,q} \nabla\Phi_i)^T\, .
\]
Thus we can estimate
\begin{align*}
\|D_{t,q} \tilde R_{q,i}\|_N \lesssim & \|D_{t,q} \nabla\Phi_i\|_N \|(\rho_{q,i}^{-1} R_{q,i})\|_0 +
\|D_{t,q} \nabla\Phi_i\|_0 \|(\rho_{q,i}^{-1} R_{q,i})\|_N\\
& + \|D_{t,q} \nabla\Phi_i \|_0 \|(\rho_{q,i}^{-1} R_{q,i})\|_0 \|\nabla\Phi_i\|_N
+ \|D_{t,q} (\rho_{q,i}^{-1} R_{q,i})\|_N + \|D_{t,q} (\rho_{q,i}^{-1} R_{q,i})\|_0 \|\nabla\Phi_i\|_N\, .
\end{align*}
Using \eqref{e:D_t:phi_N}, \eqref{e:Dt_ratio}, \eqref{e:ratio_later_needed} and \eqref{e:phi_N}, we  conclude \eqref{e:D_t:tR}.

Finally, the estimate \eqref{e:D_t:c_k} follows as a consequence of \eqref{e:a_k_est}, Lemma \ref{l:R_in_range}, Proposition \ref{p:perturbation}, \eqref{e:D_t:phi_N}, and \eqref{e:D_t:tR}.
\end{proof}


\section{Proof of Proposition \ref{p:main}}
\label{sec:p_main}

In this section we complete the proof of Proposition \ref{p:main} by proving the remaining estimates \eqref{e:outline_R_est} and \eqref{e:outline_energy_diff}.

\subsection{Estimates of the new Reynolds stress error}

In the proposition below we prove the inductive estimates on $\mathring R_{q+1}$:
\begin{proposition}
\label{p:R_q+1}
The Reynolds stress error $\mathring R_{q+1}$ defined in \eqref{e:R:q+1} satisfies the estimate
\begin{equation}\label{e:final_R_est}
\norm{\mathring R_{q+1}}_{0}\lesssim \frac{\delta_{q+1}^{\sfrac12}\delta_q^{\sfrac{1}{2}} \lambda_q}{\lambda_{q+1}^{1-4\alpha}} \,.
\end{equation}
In particular, \eqref{e:outline_R_est} holds.
\end{proposition}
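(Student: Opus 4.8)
The plan is to estimate each of the three contributions to $\mathring R_{q+1}$ in \eqref{e:R:q+1} separately, using the boundedness of $\RR$ on H\"older spaces (Proposition~\ref{p:R}, together with the Schauder-type bounds for $\RR$ collected in the Appendix), combined with the stationary-phase identity \eqref{e:Mikado_stationarity}. The basic principle is that $\RR$ applied to a function oscillating at frequency $\lambda_{q+1}$ gains a factor $\lambda_{q+1}^{-1}$, at the cost of a loss $\ell^{-1}$ coming from derivatives of the slowly-varying amplitudes $b_{i,k}$, $c_{i,k}$, $\nabla\Phi_i$ (all controlled by Proposition~\ref{p:perturbation} and Proposition~\ref{p:D_t_estimates}). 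Since $(\ell\lambda_{q+1})^{-1}$ carries a small positive power of $\lambda_{q+1}^{-1}$ by the parameter inequalities, this is the source of the gain $\lambda_{q+1}^{-1+4\alpha}$ in \eqref{e:final_R_est}; the $\alpha$-losses are absorbed into the summable factors $|k|^{-6}$ and into powers of $\lambda_q^\alpha$.

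First I would treat the \emph{Nash error} $\RR(w_{q+1}\cdot\nabla\overline v_q)$: writing $w_{q+1}$ via \eqref{e:w0_decomp} and its correction, the integrand is a sum over $i,k$ of amplitudes times $e^{i\lambda_{q+1}k\cdot\Phi_i}$, so applying the commutator-type estimate for $\RR$ acting on fast oscillations (of the form $\|\RR(a e^{i\lambda k\cdot\Phi})\|_0\lesssim \lambda^{-1}\|a\|_0 + \lambda^{-N}(\ldots)$) gives a bound $\lesssim \lambda_{q+1}^{-1}\|w_{q+1}\|_0\|\overline v_q\|_1 + (\text{lower order})\lesssim \lambda_{q+1}^{-1}\delta_{q+1}^{1/2}\delta_q^{1/2}\lambda_q\ell^{-\alpha}$ or better, using \eqref{e:w_est} and \eqref{e:vq:1}; the $\ell^{-\alpha}$ is at worst $\lambda_q^{3\alpha/2}$ which is $\leq\lambda_{q+1}^{4\alpha}$. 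For the \emph{transport error} $\RR(D_{t,q}w_{q+1})$ the key point is the Lie-advection identity \eqref{e:Lie_advection}: $D_{t,q}$ hitting the exponential $e^{i\lambda_{q+1}k\cdot\Phi_i}$ produces nothing (since $k\cdot\Phi_i$ is transported), so $D_{t,q}w_{q+1}$ is again a sum of amplitudes $D_{t,q}b_{i,k}$, $D_{t,q}c_{i,k}$ (and $(D\overline v_q)^Tw_{q+1}$-type terms) times the same exponentials, with amplitude bounds from \eqref{e:D_t:c_k} and the analogous estimate for $D_{t,q}b_{i,k}$; applying $\RR$ then gains $\lambda_{q+1}^{-1}$ and yields $\lesssim \lambda_{q+1}^{-1}\delta_{q+1}^{1/2}\tau_q^{-1}\ell^{-\alpha}=\lambda_{q+1}^{-1}\delta_{q+1}^{1/2}\delta_q^{1/2}\lambda_q\ell^{-3\alpha}$, again within the claimed bound.

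The \emph{oscillation error} $\RR\div(-\overline R_q + w_{q+1}\otimes w_{q+1})$ is the main obstacle. Here one expands $w_o\otimes w_o$ using \eqref{e:Mikado_stationarity}: the mean part $\sum_i\rho_{q,i}(\nabla\Phi_i)^{-1}R_{q,i}((\nabla\Phi_i)^{-1})^T$ of $w_o\otimes w_o$ is designed to cancel $\overline R_q=\sum_i R_{q,i}$ up to the small error $(\nabla\Phi_i)^{-1}\tilde R_{q,i}(\nabla\Phi_i)^{-T}\rho_{q,i}-R_{q,i}$-type terms — actually by \eqref{e:tildeR_def} the cancellation is exact, so only the high-frequency part $\sum_{k\neq0}$ of \eqref{e:Mikado_stationarity} survives, together with the cross terms $w_o\otimes w_c + w_c\otimes w_o + w_c\otimes w_c$. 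Each surviving piece oscillates at frequency $\geq\lambda_{q+1}$, so $\RR\div$ of it is of size $\lesssim \lambda_{q+1}^{-1}\times(\text{amplitude}\times\text{frequency of }\div)\sim$ the amplitude bound from Proposition~\ref{p:perturbation}; the coefficients $C_k(\tilde R_{q,i})$ satisfy $|D^N_R C_k|\lesssim|k|^{-m}$ for all $m$, so after composing with $\Phi_i$ and summing in $k$ one gets $\lesssim \delta_{q+1}\ell^{-\alpha}\lambda_{q+1}^{-1+\alpha}$, plus the $w_c$-cross-terms which are even smaller by \eqref{e:c_k_est}. Collecting everything, every term is $\lesssim \delta_{q+1}^{1/2}\delta_q^{1/2}\lambda_q\lambda_{q+1}^{-1+4\alpha}$, which is \eqref{e:final_R_est}; then \eqref{e:outline_R_est} is immediate since $\|\mathring R_{q+1}\|_\alpha$ is estimated the same way with one extra $\ell^{-\alpha}\leq\lambda_{q+1}^{\alpha}$ loss. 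The delicate bookkeeping is making sure the derivative losses on $\nabla\Phi_i$ and the amplitudes never exceed $\ell^{-1}$ per unit, and that the stationary-phase remainder terms (the $\lambda_{q+1}^{-N}$ tails) are harmless for $N$ large; this is where one invokes the precise form of the Appendix estimates for $\RR$ on oscillatory functions.
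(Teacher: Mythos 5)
Your overall strategy is the same as the paper's: estimate the three terms of \eqref{e:R:q+1} separately via the stationary-phase estimate for $\RR$ acting on $a\,e^{i\lambda_{q+1}k\cdot\Phi_i}$ (Proposition \ref{p:R:oscillatory_phase}), with amplitudes controlled by Propositions \ref{p:perturbation} and \ref{p:D_t_estimates}, and your treatment of the Nash and transport errors (in particular the observation that $D_{t,q}$ annihilates the phase and only hits the amplitudes, via \eqref{e:Lie_advection}) matches the paper.

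There is, however, a genuine gap in the oscillation error, which is the heart of the proposition. After the exact cancellation of $\overline R_q$ against the mean of $w_o\otimes w_o$ (which you correctly identify, cf.\ \eqref{e:wowo}), you are left with $\RR\div$ of $\sum_{k\neq 0}\rho_{q,i}\nabla\Phi_i^{-1}C_k(\tilde R_{q,i})\nabla\Phi_i^{-T}e^{i\lambda_{q+1}k\cdot\Phi_i}$. Your heuristic ``$\RR\div$ of it is of size $\lesssim\lambda_{q+1}^{-1}\times(\text{amplitude}\times\text{frequency of }\div)\sim$ the amplitude'' is exactly the estimate one must \emph{avoid}: if the divergence is allowed to land on the exponential it produces a factor $\lambda_{q+1}$, the subsequent gain of $\lambda_{q+1}^{-1}$ from $\RR$ cancels it, and one is left with $\|\mathcal O_1\|_\alpha\lesssim\delta_{q+1}$, which is far larger than $\delta_{q+1}^{\sfrac12}\delta_q^{\sfrac12}\lambda_q\lambda_{q+1}^{-1+4\alpha}$ and does not close the iteration. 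The missing ingredient is the algebraic identity $C_k(R)\,k=0$ from \eqref{e:Ck_ind} (a consequence of $\div_\xi(W\otimes W)=0$ for Mikado flows), which combined with the transposed-gradient structure gives $\nabla\Phi_i^{-1}C_k\nabla\Phi_i^{-T}\,\nabla(k\cdot\Phi_i)=\nabla\Phi_i^{-1}C_k\,k=0$: the term where $\div$ hits the phase vanishes identically, so only $\div$ of the slow amplitude survives, of size $\delta_{q+1}\ell^{-1}|k|^{-6}$, and then Proposition \ref{p:R:oscillatory_phase} yields the full gain $\lambda_{q+1}^{-1+\alpha}$. (Relatedly, your claimed bound $\delta_{q+1}\ell^{-\alpha}\lambda_{q+1}^{-1+\alpha}$ does not follow from your own heuristic and should read $\delta_{q+1}\ell^{-1}\lambda_{q+1}^{-1+\alpha}$, which by \eqref{e:ell_def} equals $\delta_{q+1}^{\sfrac12}\delta_q^{\sfrac12}\lambda_q^{1+\sfrac{3\alpha}{2}}\lambda_{q+1}^{-1+\alpha}$ and is still admissible.) Without the explicit use of $C_k\,k=0$ the argument for the oscillation error fails.
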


\subsubsection{Nash error}

We just write this term as
\begin{align*}
\mathcal R\left(w_{q+1} \cdot \nabla \overline v_q \right)
=\sum_{i}\sum_{k\neq0}{\mathcal R}\left( \Big( (\nabla\Phi_i)^{-1} b_{i,k} e^{i\lambda_{q+1}k\cdot \Phi_i} +c_{i,k} e^{i\lambda_{q+1}k\cdot \Phi_i} \Big)\cdot \nabla \overline v_{q}  \right).
\end{align*}
Using Proposition~\ref{p:R:oscillatory_phase} we bound for $t\in \tilde I_i$
\begin{align*}
 &\norm{{\mathcal R}\left(  (\nabla\Phi_i)^{-1} b_{i,k} e^{i\lambda_{q+1}k\cdot \Phi_i}\cdot \nabla \overline v_{q} \right)}_{\alpha}\notag\\ 
 &\qquad \lesssim \frac{\norm{\nabla\Phi_i^{-1} b_{i,k}\cdot \nabla \overline v_{q}}_{0}}{\lambda_{q+1}^{1-\alpha}}
+ \frac{\norm{ \nabla\Phi_i^{-1} b_{i,k}\cdot \nabla \overline v_{q}}_{N+\alpha}+\norm{\nabla\Phi_i^{-1} b_{i,k}\cdot \nabla \overline v_{q}}_{0}\norm{ \Phi_i}_{N+\alpha}}{\lambda_{q+1}^{N-\alpha}}
 \notag\\
 &\qquad \lesssim \frac{\lambda_q \delta_{q+1}^{\sfrac12} \delta_q^{\sfrac12}}{\lambda_{q+1}^{1-\alpha}|k|^6} + \frac{ \lambda_{q}\delta_{q+1}^{\sfrac12} \delta_q^{\sfrac12}}{\lambda_{q+1}^{N-\alpha}\ell^{N+\alpha} |k|^6}\, .
\end{align*}
Now, provided $\alpha$ is sufficiently small we claim that we can first fix a suitable $N$ and then choose $a$ large enough, so that
\[
\frac{1}{\lambda_{q+1}^{N-\alpha}\ell^{N+\alpha}} \leq \frac{1}{\lambda_{q+1}^{1-\alpha}}\, .
\]
Such choice is equivalent to $\lambda_{q+1}^{(N-1)-(N-\alpha) \beta} \geq \lambda_q^{(1-\beta +3\alpha/2)(N+\alpha)}$. 
Taking the logarithms in base $a$, we need the condition 
\[
b^{q+1} ((N-1) - (N-\alpha) \beta) > b^q (N+\alpha) \left(1 - \beta +\frac{3\alpha}{2}\right)\, , 
\] 
which would determine the needed $N$. In order to show that for $\alpha$ sufficiently small we can choose such an $N$, we just need to verify the existence of $N$ such that $b ((N-1) -N \beta)> N (1-\beta)$. The latter is equivalent to $(b-1)(N-1) (1-\beta) > (1-\beta) + b \beta$ which in turn, since $\beta < \sfrac{1}{3}$ and $b>1$, can certainly be satisfied for $N$ large enough. 
Finally, having chosen first $\alpha>0$ and then $N$ according to the above requirement, we can then take $a\gg 1$ large enough to beat the eventual geometric constant due to \eqref{e:bloody_integers}.
Hence we achieve
\begin{equation}\label{e:Nash_freq}
\norm{{\mathcal R}\left(  (\nabla\Phi_i)^{-1} b_{i,k} e^{i\lambda_{q+1}k\cdot \Phi_i}\cdot \nabla \overline v_{q} \right)}_{\alpha} \lesssim \frac{\lambda_q \delta_{q+1}^{\sfrac12} \delta_q^{\sfrac12}}{\lambda_{q+1}^{1-\alpha}|k|^6}\, .\end{equation}

For the second term in the Nash error we 
again use Corollary~\ref{p:R:oscillatory_phase} to obtain
\begin{align}
\norm{{\mathcal R}\left( c_{i,k} e^{i\lambda_{q+1}k\cdot \Phi_i}\cdot \nabla \overline v_{q} \right)}_{\alpha} 
 &\lesssim \frac{\norm{c_{i,k}\cdot \nabla \overline v_{q}}_{0}}{\lambda_{q+1}^{1-\alpha}} + \frac{\norm{ c_{i,k}\cdot \nabla \overline v_{q}}_{N+\alpha}+\norm{ c_{i,k}\cdot \nabla \overline v_{q}}_{0}\norm{ \Phi_i}_{N+\alpha}}{\lambda_{q+1}^{N-\alpha}}
 \notag\\
 &\lesssim \frac{\delta_{q+1}^{\sfrac12}\delta_{q}^{\sfrac12}\lambda_q}{\ell\lambda_{q+1}^{2-\alpha}|k|^{6}}+\frac{\delta_{q+1}^{\sfrac12}\delta_{q}^{\sfrac12}\lambda_q}{\ell^{N+1-\alpha}\lambda_{q+1}^{N+1-\alpha}|k|^{6}}
 \lesssim \frac{\delta_{q+1}^{\sfrac12}\delta_{q}^{\sfrac12}\lambda_q}{\lambda_{q+1}^{1-\alpha}|k|^{6}}\label{e:Nash_freq_2}
\end{align}
where again we assume to have fixed first $N$ and then $a$ large enough. 
We also implicitly used that 
\begin{equation}\label{e:ell-lambdaq+1}
\ell \lambda_{q+1} \geq 1\,,
\end{equation}
which is equivalent to 
$\lambda_{q+1}^{1-\beta} \geq \lambda_q^{1-\beta + \sfrac{3\alpha}{2}}$. The latter inequality follows from~\eqref{e:b_beta_rel} and \eqref{e:bloody_integers}, upon taking logarithms in base $a$, choosing first $\alpha$ so that $(b - 1)(1-\beta) \geq 3\alpha $, and then $a$ sufficiently large so that $\tfrac{(b-1)}{10} \geq \log_a (4\pi)$.

Summing over the frequencies and using that $\sum_{k\in \Z^3\setminus \{0\}} |k|^{-6} < \infty$, we achieve
\begin{equation}\label{e:Nash_est}
\mathcal R\left(w_{q+1} \cdot \nabla \overline v_q \right) \lesssim \frac{ \delta_{q+1}^{\sfrac12} \delta_q^{\sfrac12} \lambda_q}{\lambda_{q+1}^{1-\alpha}|k|^6}\, .
\end{equation}

\subsubsection{Transport error}
We split the transport error into two parts
\[(\partial_t+\overline v_q\cdot \nabla) w_{q+1}=(\partial_t+\overline v_q\cdot \nabla) w_o+(\partial_t+\overline v_q\cdot \nabla) w_c.\]

Applying \eqref{e:Lie_advection} yields
\begin{equation}\label{e:wotransport}
\begin{split}
(\partial_t+\overline v_q\cdot \nabla) w_o =&
\sum_{i,k} (\nabla\overline v_q)^T(\nabla\Phi_i)^{-1} b_{i,k} e^{i\lambda_{q+1}k\cdot \Phi_i}\\&\quad +
\sum_{i,k} (\partial_t+\overline v_q\cdot \nabla) \left(\rho_{q,i}^{\sfrac12} a_k(\tilde R_i)\right) (\nabla\Phi_i)^{-1} A_k e^{i\lambda_{q+1}k\cdot \Phi_i} \,.
\end{split}
\end{equation}
We then apply Corollary~\ref{p:R:oscillatory_phase} to obtain for the first term in \eqref{e:wotransport}
\begin{align}
&\norm{\mathcal R\left( (\nabla\overline v_q)^T(\nabla\Phi_i)^{-1} b_{i,k} e^{i\lambda_{q+1}k\cdot \Phi_i} \right)}_{\alpha}\notag\\
&\lesssim 
 \frac{\norm{ (\nabla \overline v_q)^T(\nabla\Phi_i)^{-1} b_{i,k}}_{0}}{\lambda_{q+1}^{1-\alpha}}+\frac{\norm{  (\nabla \overline v_q)^T(\nabla\Phi_i)^{-1} b_{i,k}}_{N+\alpha}}{\lambda_{q+1}^{N-\alpha}}
+\frac{\norm{  (\nabla \overline v_q)^T(\nabla\Phi_i)^{-1} b_{i,k}}_{0}\norm{  \Phi_i}_{N+\alpha}}{\lambda_{q+1}^{N-\alpha}} \,.\label{e:transport_1}
\end{align}
We use Proposition \ref{p:perturbation} and Proposition \ref{p:vq:vell} to estimate
\begin{align*}
\norm{  (\nabla \overline v_q)^T(\nabla\Phi_i)^{-1} b_{i,k}}_{N+\alpha}
&\lesssim \|\nabla \overline v_q\|_{N+\alpha} \|(\nabla\Phi_i)^{-1}\|_\alpha
\|b_{i,k}\|_\alpha\notag\\
& +  \|\nabla \overline v_q\|_{\alpha} \|(\nabla\Phi_i)^{-1}\|_{N+\alpha}
\|b_{i,k}\|_\alpha +  \|\nabla \overline v_q\|_{\alpha} \|(\nabla\Phi_i)^{-1}\|_\alpha
\|b_{i,k}\|_{N+\alpha}\\
&\lesssim \delta_{q+1}^{\sfrac{1}{2}} \ell^{-N-3\alpha} \lesssim \delta_{q+1}^{\sfrac{1}{2}} \delta_q^{\sfrac{1}{2}} \lambda_q \ell^{-N-1-3\alpha}\, .
\end{align*}
Arguing in a similar fashion for the third summand in \eqref{e:transport_1}, we achieve
\begin{align*}
\norm{\mathcal R\left( (\nabla\overline v_q)^T(\nabla\Phi_i)^{-1} b_{i,k} e^{i\lambda_{q+1}k\cdot \Phi_i} \right)}_{\alpha} &\lesssim \frac{\lambda_q \delta_{q+1}^{\sfrac12} \delta_q^{\sfrac12}}{\lambda_{q+1}^{1-\alpha}|k|^6} + \frac{ \lambda_{q}\delta_{q+1}^{\sfrac12} \delta_q^{\sfrac12}}{\lambda_{q+1}^{N-\alpha} \ell^{N+1+3\alpha} |k|^6}
 \lesssim \frac{\lambda_q \delta_{q+1}^{\sfrac12} \delta_q^{\sfrac12}}{\lambda_{q+1}^{1-\alpha}|k|^6}\, ,
\end{align*}
where in the last inequality, as in the previous section, we have assumed $\alpha$ sufficiently small and $N$ appropriately chosen. 

For the second term in \eqref{e:wotransport}, let us define
\[  
d_{i,k}(x,t):=D_{t,q}\left(\left(\rho_{q+1,i}(x,t)\right)^{\sfrac12} a_k(\tilde R_i(x,t))\right) (\nabla\Phi_i(x,t))^{-1} A_k\,.
\]
Using \eqref{e:phi_N}, \eqref{e:rho_i_bnd_N}, \eqref{e:rho_i_bnd_t} and \eqref{e:D_t:tR} and again assuming $N$ sufficiently large, arguing as above we conclude
\begin{align*}
\norm{\mathcal R\left( d_{i,k}e^{i\lambda_{q+1}k\cdot \Phi_i} \right)}_{\alpha}
&\lesssim
\frac{\norm{ d_{i,k }}_{0}}{\lambda_{q+1}^{1-\alpha}} +
\frac{\norm{ d_{i,k }}_{N+\alpha}+\norm{ d_{i,k }}_{0}\norm{\Phi_i}_{N+\alpha}}{\lambda_{q+1}^{N-\alpha}}\\
&\lesssim \frac{\delta_{q+1} ^{\sfrac 12}}{\tau_q\lambda_{q+1}^{1-\alpha}|k|^{6}}
= \frac{\delta_{q+1}^{\sfrac12}\delta_{q}^{\sfrac12}\lambda_q}{\lambda_{q+1}^{1-\alpha}}\ell^{-2\alpha}|k|^{-6}
\lesssim \frac{\delta_{q+1}^{\sfrac12}\delta_{q}^{\sfrac12}\lambda_q}{\lambda_{q+1}^{1-4\alpha}} |k|^{-6}\, ,
\end{align*}
where we have used $\ell^{-2\alpha} \leq \lambda_q^{3\alpha}\leq \lambda_{q+1}^{3\alpha}$ (see \eqref{e:compare-lambda-ell}).

Now we consider the term involving the material derivative of the correction. Observe
\begin{align*}
(\partial_t+\overline v_q\cdot \nabla) w_c =&
\sum_{i,k} \left(D_{t,q} c_{i,k }\right)e^{i\lambda_{q+1}k\cdot \Phi_i}
\end{align*}
Then applying Corollary~\ref{p:R:oscillatory_phase} and \eqref{e:D_t:c_k} yields
\begin{align*}
\norm{\mathcal R \left(\left(D_{t,q} c_{i,k }\right)e^{i\lambda_{q+1}k\cdot \Phi_i}\right)}_{\alpha}\lesssim &
\frac{\norm{D_{t,q} c_{i,k }}_{0}}{\lambda_{q+1}^{1-\alpha}} +
\frac{\norm{D_{t,q} c_{i,k }}_{N+\alpha}+\norm{D_{t,q} c_{i,k }}_{0}\norm{\Phi_i}_{N+\alpha}}{\lambda_{q+1}^{N-\alpha}}\\
\lesssim & \frac{\delta_{q+1} ^{\sfrac 12}}{\tau_q\ell \lambda_{q+1}^{2-\alpha}|k|^{6}}
\lesssim  \frac{\delta_{q+1} ^{\sfrac 12}}{\tau_q\lambda_{q+1}^{1-\alpha}|k|^{6}}
\lesssim \frac{\delta_{q+1}^{\sfrac12}\delta_{q}^{\sfrac12}\lambda_q}{\lambda_{q+1}^{1-3\alpha}}|k|^{-6}
\end{align*}
where we used \eqref{e:phi_N}, \eqref{e:D_t:c_k} and \eqref{e:ell-lambdaq+1}.

Again, summing over $k \neq 0$ we reach the inequality
\begin{equation}\label{e:trans_est}
\|\RR \left( \partial_t  w_{q+1} + \overline v_q \cdot \nabla w_{q+1} \right)\|_\alpha \lesssim
\frac{\delta_{q+1}^{\sfrac12}\delta_{q}^{\sfrac12}\lambda_q}{\lambda_{q+1}^{1-3\alpha}}\, .
\end{equation}

\subsubsection{Oscillation error}

Recall the oscillation error may be written as
\begin{align*}
&\RR \div \left(- {\overline R}_q + w_{q+1}  \otimes  w_{q+1} \right) \notag\\
&\quad = \underbrace{\RR \div\left (- {\overline R}_q  + w_o\otimes w_o\right)}_{=: \mathcal O_1}+
\underbrace{\mathcal R \div \left (w_o\otimes w_c+w_c\otimes w_o+w_c \otimes w_c\right)}_{=:\mathcal O_2}\,.
\end{align*}
For the second term we proceed as follows:
\begin{align}
\norm{\mathcal O_2}_{\alpha} \lesssim& \norm{w_o\otimes w_c+w_c\otimes w_o+w_c\otimes w_c}_\alpha\notag\\
\lesssim & \norm{w_0}_0\norm{w_c}_{\alpha}+\norm{w_0}_{\alpha}\norm{w_c}_{0}+\norm{w_c}_{\alpha}^2
\lesssim \frac{\delta_{q+1}}{\ell\lambda_{q+1}^{1-\alpha}} 
\lesssim  \frac{\delta_{q+1}^{\sfrac12}\delta_{q}^{\sfrac12}\lambda_q}{\lambda_{q+1}^{1-\alpha}} \label{e:O1}
\,.
\end{align}

Now consider $\mathcal O_1$. Due to the supports of the cutoffs $\eta_j$ being mutually disjoint, we have
\begin{align*}
 \mathcal O_1 = \RR \div \left(- {\overline R}_q +  \sum_{i} w_{o,i}  \otimes w_{o,i}  \right).
\end{align*}
Using the definition of $w_{o,i}$ and \eqref{e:Mikado_stationarity}, the tensor $w_{o,i} \otimes w_{o,i} $ may be written as
\begin{align}
 w_{o,i} \otimes w_{o,i} &= \rho_{q,i} \nabla \Phi_i^{-1} (W\otimes W)(\tilde R_{q,i}, \lambda_{q+1} \Phi_i) \nabla \Phi_i^{-T} 
 \notag \\
 &= \rho_{q,i} \nabla \Phi_i^{-1} \tilde R_{q,i} \nabla \Phi_i^{-T} + \sum_{k\neq 0} \rho_{q,i}  \nabla \Phi_i^{-1}  C_{k}(\tilde R_{q,i}) \nabla \Phi_i^{-T} e^{i\lambda_{q+1}k\cdot \Phi_i} \notag \\
 &= R_{q,i}    + \sum_{k\neq 0}\rho_{q,i}  \nabla \Phi_i^{-1}  C_{k}(\tilde R_{q,i}) \nabla \Phi_i^{-T} e^{i\lambda_{q+1}k\cdot \Phi_i}. \label{e:wowo}
\end{align}

On the other hand, recalling \eqref{e:Ck_ind} 
\[
\nabla\Phi_i^{-1}C_k \nabla\Phi_i^{-T}\nabla\Phi_i^{T}k=0,
\] 
consequently
\begin{align*}
 \div\left(\sum_{i} w_{o,i} \otimes w_{o,i}-R_{q,i}  \right)=\sum_{i,k\neq 0} \div( \rho_{q,i}  \nabla\Phi_i^{-1} C_{k}(\tilde R_{q,i}) \nabla\Phi_i^{-T}) e^{i\lambda_{q+1}k\cdot \Phi_i} 
\end{align*}
Thus, by Proposition~\ref{p:R:oscillatory_phase} 
\begin{align}
\norm{\mathcal O_1}_{\alpha}
&\lesssim \sum_{i}\sum_{k\neq0} \frac{\norm{\div(\rho_{q,i}  \nabla\Phi_i^{-1} C_{k}(\tilde R_{q,i}) \nabla\Phi_i^{-T})}_0}{\lambda_{q+1}^{1-\alpha}} \notag\\
&\  + \sum_{i}\sum_{k\neq0}
\frac{\norm{\div( \rho_{q,i}  \nabla\Phi_i^{-1} C_{k}(\tilde R_{q,i}) \nabla\Phi_i^{-T})}_{N+\alpha}+\norm{\div( \rho_{q,i}  \nabla\Phi_i^{-1} C_{k}(\tilde R_{q,i}) \nabla\Phi_i^{-T})}_{0}\norm{\Phi_i}_{N+\alpha}}{\lambda_{q+1}^{N-\alpha}}\notag\\
&\lesssim \sum_{i}\sum_{k\neq 0}\frac{\delta_{q+1}}{\ell\lambda_{q+1}^{1-\alpha}|k|^{6}}
\lesssim \frac{\delta_{q+1}^{\sfrac12}\delta_q^{\sfrac12}\lambda_q}{\lambda_{q+1}^{1-\alpha}}
\,,\label{e:O2}
\end{align}
where we have used  \eqref{e:Ck_ind} and, as in the previous sections, a large choice of $N$ to absorb the estimates of the second line in that for the first line. Clearly, \eqref{e:O1} and \eqref{e:O2} give
\begin{equation}\label{e:osc_est}
\|\RR \div \left(- {\overline R}_q + w_{q+1}  \otimes  w_{q+1}\right)\|_\alpha \lesssim
\frac{\delta_{q+1}^{\sfrac12}\delta_q^{\sfrac12}\lambda_q}{\lambda_{q+1}^{1-\alpha}}\, .
\end{equation}

\subsubsection{Conclusion} Clearly \eqref{e:final_R_est} follows from \eqref{e:Nash_est}, \eqref{e:trans_est}, \eqref{e:osc_est} and \eqref{e:R:q+1}. 

\subsection{Energy iterate}

\eqref{e:energy_inductive_assumption} in the following proposition: 
\begin{proposition}\label{p:energy}
 The energy of $v_{q+1}$ satisfies the following estimate:
\begin{equation*}
    \abs{e(t)-\int_{\T^3}\abs{v_{q+1}}^2\,dx-\frac{\delta_{q+2}}2 }\lesssim \frac{\delta_q^{\sfrac12}\delta_{q+1}^{\sfrac12}\lambda_q^{1+2\alpha}}{\lambda_{q+1}}\,.
\end{equation*}
In particular, the estimate \eqref{e:outline_energy_diff} holds.
\end{proposition}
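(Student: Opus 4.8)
\emph{Proof plan.} The plan is to evaluate $\int_{\T^3}|v_{q+1}|^2\,dx$ explicitly, up to errors that oscillate at frequency $\lambda_{q+1}$, and then to absorb those errors by stationary phase. Writing $v_{q+1}=\overline v_q+w_o+w_c$ and expanding the square gives
\[
\int_{\T^3}|v_{q+1}|^2\,dx=\int_{\T^3}|\overline v_q|^2\,dx+\int_{\T^3}|w_o|^2\,dx+2\int_{\T^3}\overline v_q\cdot w_{q+1}\,dx+2\int_{\T^3}w_o\cdot w_c\,dx+\int_{\T^3}|w_c|^2\,dx\,.
\]
Since by the definition of $\rho_q$ we have $3\rho_q(t)=e(t)-\tfrac{\delta_{q+2}}{2}-\int_{\T^3}|\overline v_q|^2\,dx$, it suffices to show that $\int_{\T^3}|w_o|^2\,dx$ equals $3\rho_q(t)$ up to an error controlled by the right-hand side of the claimed estimate, and that the remaining three integrals are of the same order.

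For the main term I would use the algebraic identity \eqref{e:wowo} already derived in the oscillation error analysis. Because the $w_{o,i}$ have pairwise disjoint supports, $\int_{\T^3}|w_o|^2\,dx=\sum_i\int_{\T^3}|w_{o,i}|^2\,dx$, and taking the trace of \eqref{e:wowo} yields
\[
|w_{o,i}|^2=\tr R_{q,i}+\sum_{k\neq0}\rho_{q,i}\,\tr\bigl(\nabla\Phi_i^{-1}C_k(\tilde R_{q,i})\nabla\Phi_i^{-T}\bigr)e^{i\lambda_{q+1}k\cdot\Phi_i}\,.
\]
Now $\tr R_{q,i}=\tr(\rho_{q,i}\Id-\eta_i^2\mathring{\overline R}_q)=3\rho_{q,i}$ since $\mathring{\overline R}_q$ is traceless, and from the definition of $\rho_{q,i}$ one has $\int_{\T^3}\sum_i\rho_{q,i}(x,t)\,dx=\rho_q(t)$, because the weights $\eta_i^2$ are normalized by $\sum_j\int_{\T^3}\eta_j^2\,dy$. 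Hence $\int_{\T^3}|w_o|^2\,dx=3\rho_q(t)+\mathcal E$, with $\mathcal E=\sum_i\sum_{k\neq0}\int_{\T^3}\rho_{q,i}\,\tr(\nabla\Phi_i^{-1}C_k(\tilde R_{q,i})\nabla\Phi_i^{-T})e^{i\lambda_{q+1}k\cdot\Phi_i}\,dx$, and therefore
\[
e(t)-\tfrac{\delta_{q+2}}{2}-\int_{\T^3}|v_{q+1}|^2\,dx=-2\int_{\T^3}\overline v_q\cdot w_{q+1}\,dx-\mathcal E-2\int_{\T^3}w_o\cdot w_c\,dx-\int_{\T^3}|w_c|^2\,dx\,.
\]

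It then remains to bound the four terms on the right. The last two are handled by size alone: by Corollary~\ref{c:perturbation}, $\norm{w_o}_0\lesssim\delta_{q+1}^{\sfrac12}$ and $\norm{w_c}_0\lesssim\delta_{q+1}^{\sfrac12}(\ell\lambda_{q+1})^{-1}$, so $\bigl|\int_{\T^3}w_o\cdot w_c\bigr|+\bigl|\int_{\T^3}|w_c|^2\bigr|\lesssim\delta_{q+1}(\ell\lambda_{q+1})^{-1}$; since $(\ell\lambda_{q+1})^{-1}=\lambda_q^{1-\beta+\sfrac{3\alpha}{2}}\lambda_{q+1}^{-(1-\beta)}$ while $\delta_q^{\sfrac12}\lambda_q^{1+2\alpha}=\lambda_q^{1-\beta+2\alpha}$, this is $\le\delta_q^{\sfrac12}\delta_{q+1}^{\sfrac12}\lambda_q^{1+2\alpha}\lambda_{q+1}^{-1}$ as required. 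For $\int_{\T^3}\overline v_q\cdot w_{q+1}\,dx=\sum_{i,k}\int_{\T^3}\overline v_q\cdot\bigl((\nabla\Phi_i)^{-1}b_{i,k}+c_{i,k}\bigr)e^{i\lambda_{q+1}k\cdot\Phi_i}\,dx$ and for $\mathcal E$, I would invoke the stationary phase estimate, exactly as in the bound \eqref{e:O2} for the oscillation error: since $|\nabla(k\cdot\Phi_i)|\ge|k|/2$ by \eqref{e:Phi-close-to-id}, integrating by parts $N$ times gains a factor $(\lambda_{q+1}|k|)^{-N}$ at the cost of $N$ derivatives on the amplitude or on $\nabla\Phi_i$. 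Using \eqref{e:phi_N}, \eqref{e:b_k_est_N}, \eqref{e:c_k_est}, \eqref{e:rho_i_bnd_N}, \eqref{e:tR_est} and \eqref{e:Ck_ind} (with $m$ large, and the parameter inequality $\delta_q^{\sfrac12}\lambda_q\ell\le\delta_{q+1}^{\sfrac12}$ from \eqref{e:compare-lambda-ell}, so that the relevant amplitude seminorms are $\lesssim\delta_{q+1}^{\sfrac12}|k|^{-6}\ell^{-N}$), summing the convergent series $\sum_{k\neq0}|k|^{-6-N}$, and noting that only $O(1)$ indices $i$ are active at any fixed time by disjointness of the $\supp\eta_i$, each of these two terms is $\lesssim\delta_{q+1}^{\sfrac12}(\ell\lambda_{q+1})^{-N}$. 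Choosing first $\alpha$ small, then $N$ large enough that $(\ell\lambda_{q+1})^N\gtrsim\lambda_{q+1}\lambda_q^{-(1-\beta+2\alpha)}$ --- possible because $\ell\lambda_{q+1}$ is a fixed positive power of $\lambda_q$ by \eqref{e:ell-lambdaq+1} --- and then $a$ large to absorb the constants of \eqref{e:bloody_integers}, this too is $\lesssim\delta_q^{\sfrac12}\delta_{q+1}^{\sfrac12}\lambda_q^{1+2\alpha}\lambda_{q+1}^{-1}$, which completes the estimate.

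The main obstacle is the bookkeeping in the stationary phase step: one must verify that after $N$ integrations by parts the amplitude seminorms genuinely obey the stated bounds uniformly in $k$ (this uses the composition estimate for $C_k(\tilde R_{q,i})$ together with \eqref{e:tR_est}, and the parameter inequality $\delta_q^{\sfrac12}\lambda_q\ell\le\delta_{q+1}^{\sfrac12}$), and that the gain $\ell\lambda_{q+1}$ is a large positive power of $\lambda_q$, so that finitely many integrations by parts already beat $\lambda_{q+1}\lambda_q^{-(1+2\alpha)}$. Everything else is the routine expansion above, together with the trace computation $\tr R_{q,i}=3\rho_{q,i}$ and the normalization $\int_{\T^3}\sum_i\rho_{q,i}\,dx=\rho_q$.
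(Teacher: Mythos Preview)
Your proposal is correct and follows the paper's proof closely for the expansion, the trace computation $\tr R_{q,i}=3\rho_{q,i}$, the normalization $\sum_i\int\rho_{q,i}=\rho_q$, the size bound on $\int(2w_o\cdot w_c+|w_c|^2)$, and the stationary phase treatment of $\mathcal E$.

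The one genuine difference is your handling of the cross term $\int_{\T^3}\overline v_q\cdot w_{q+1}\,dx$. The paper does not apply stationary phase here; instead it exploits the curl representation \eqref{e:w_curl}, writing $w_{q+1}=\lambda_{q+1}^{-1}\curl(\cdots)$ and integrating by parts \emph{once} to move the curl onto $\overline v_q$. This gives directly $\lambda_{q+1}^{-1}\sum_{i,k}\|(\nabla\Phi_i)^T(ik\times b_{i,k})/|k|^2\|_0\,\|\overline v_q\|_1\lesssim\delta_q^{\sfrac12}\delta_{q+1}^{\sfrac12}\lambda_q\lambda_{q+1}^{-1}$, with no need to choose a large $N$ or verify a parameter inequality. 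Your route via Proposition~\ref{p:R:oscillatory_phase} also works---the amplitude $\overline v_q\cdot((\nabla\Phi_i)^{-1}b_{i,k}+c_{i,k})$ does obey $\|\cdot\|_N\lesssim\delta_{q+1}^{\sfrac12}|k|^{-6}\ell^{-N}$ once you use $\|\overline v_q\|_0\lesssim 1$ and $\|\overline v_q\|_N\lesssim\delta_q^{\sfrac12}\lambda_q\ell^{1-N}\le\delta_{q+1}^{\sfrac12}\ell^{-N}$---but it is more laborious: you must carry out the same ``choose $N$ large, then $a$ large'' argument already used in the Nash and transport error estimates. The paper's single integration by parts is cleaner and yields the sharper exponent $\lambda_q$ rather than $\lambda_q^{1+2\alpha}$ for this particular term.
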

\begin{proof}[Proof of Proposition~\ref{p:energy}]
By definition we have
\begin{equation*}
\int_{\T^3}\abs{v_{q+1}}^2\,dx=\int_{\T^3}\abs{\overline v_{q}}^2\,dx + 2\int_{\T^3} w_{q+1}\cdot \overline v_q\,dx+\int_{\T^3}\abs{w_{q}(x,t)}^2\,dx
\end{equation*}

We also recall that
\begin{equation*}
\sum_i \int_{\T^3} \tr R_{q,i} (x,t)\,dx= 3 \sum_i \int_{\T^3} \rho_{q,i}(x,t) dx = 3 \rho_q(t) =e(t)-\frac{\delta_{q+2}}{2} - \int_{\T^3} \abs{\overline v_q}^2\,dx.
\end{equation*}

By integrating by parts once and using the identity \eqref{e:w_curl} and the estimates \eqref{e:phi_N} and \eqref{e:b_k_est_N} we obtain
\begin{align*}
\abs{\int_{\T^3} w_{q+1}\cdot \overline v_q\,dx}=&\sum_{i}\sum_{k\neq0}\norm{ (\nabla\Phi_i)^T\left(\frac{ik\times b_k}{\abs{k}^2}\right)}_0\norm{\overline v_q}_1
\lesssim \frac{\delta_q^{\sfrac12}\delta_{q+1}^{\sfrac12}\lambda_q}{\lambda_{q+1}} \,.
\end{align*}

Using \eqref{e:w_o_est} and \eqref{e:w_c_est} yields
\begin{align*}
\abs{\int_{\T^3} 2w_o\cdot w_c+\abs{w_c}^2\,dx}\lesssim \frac{\delta_{q+1}}{\ell\lambda_{q+1}}\stackrel{\eqref{e:ell_def}}= \frac{\delta_q^{\sfrac12}\delta_{q+1}^{\sfrac12}\lambda_q^{1+2\alpha}}{\lambda_{q+1}} \,.
\end{align*}

Finally, recall from \eqref{e:wowo} that 
\begin{align*}
\int_{\T^3} \abs{w_{o}(x,t)}^2\,dx &=
\sum_i \int_{\T^3} \tr R_{q,i} (x,t)\,dx+
\int_{\T^3}\sum_{i,k\neq 0} \rho_{q,i} \nabla\Phi_i^{-1}\tr C_k(\tilde R_{q,i}) \nabla\Phi_i^{-T}e^{i\lambda_{q+1} k \cdot \Phi_i}\,dx\\
\end{align*}
and thus it remains to bound the second term.
Set $e_{q,i}:=\rho_{q,i} \nabla\Phi_i^{-1}\tr C_k(\tilde R_i) \nabla\Phi_i^{-T}$ and use Proposition \ref{p:perturbation} and Lemma \ref{l:R_in_range} to conclude
\[
\|e_{q,i}\|_N \lesssim \delta_{q+1} \ell^{-N}\, .
\]
Next observe that at any given time at most two $e_{q,i}$ are nonvanishing. Hence use \eqref{e:phase_integration} in Proposition \ref{p:R:oscillatory_phase} to bound 

\begin{align*}
\left| \int_{\T^3}\sum_{i}\sum_{k\neq0}\rho_i \nabla\Phi_i^{-1}\tr C_k(\tilde R_i) \nabla\Phi_i^{-T}e^{i\lambda_{q+1} k \cdot \Phi_i}\,dx \right| \lesssim& 
\sum_{k\neq0}\frac{\delta_{q+1}\ell^{-N}}{\lambda_{q+1}^N|k|^{N}}.
\end{align*}
As already argued several time, we can choose $N$ such that $\sfrac{\delta_{q+1}\ell^{-N}}{\lambda_{q+1}^N}\leq \sfrac{\delta_{q+1} \delta_q^{\sfrac{1}{2}} \lambda_q}{\lambda_{q+1}}$. Assuming in addition that $N$ is larger than $4$ (so that the series is summable), we obtain the desired estimate.
\end{proof}

\section{An \texorpdfstring{$h$}{h}-principle}
\label{s:h}

In order to prove Theorem \ref{t:h-principle}, let us first state a variant of Proposition 3.1 from \cite{DaSz2016} that follows from the estimates in Section 5 used to prove the proposition in  \cite{DaSz2016}:

\begin{theorem}\label{t:first_step}
Let $(\bar{v},\bar{p},\bar{R})$ be a smooth strict subsolution of the Euler equations on $\T^3\times[0,T]$ and fix  $0<\gamma<1$. Then there exists $\eps_0>0$ such that for any $\eps<\eps_0$, and for any sufficiently large $\lambda$ depending on $\eps_0$ and $(\bar{v},\bar{p},\bar{R})$, we  have the following: There exists a smooth solution $(v,p,{R})$ of \eqref{e:ER}  satisfying the estimates
\begin{align*}
 \|v-\bar{v}\|_{H^{-1}} 
  &\leq C \lambda^{-1}\\
  \|v\|_{0}+\lambda^{-1}\|v\|_{1}
  &\leq C \\
  \norm{v\otimes v+ R -\bar{v}\otimes\bar{v}-\bar{R}}_{H^{-1}}
  &\leq C\lambda^{\gamma-1}\\
  \|\mathring R \|_{0}
  &\leq C \lambda^{\gamma-1} \\
   \norm{\tr R}_0  &\leq \eps \, ,
\end{align*}
where $C$ depends solely on $(\bar{v},\bar{p},\bar{R})$, and $\mathring{R}$ is the traceless part of $R$.  Moreover setting
\begin{align}
  e(t):=\int_{\T^3}|\bar{v}|^2+\tr\bar{R} \,dx
\label{e:h_e}
\end{align}
for any $t\in [0,T]$ we have
\[\frac{\eps}{2}\leq   e(t) - \int_{\T^3} |v|^2 dx \leq\eps\,.\]
\end{theorem}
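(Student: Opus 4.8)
The plan is to prove Theorem~\ref{t:first_step} by performing a single step of the convex--integration scheme; up to the bookkeeping of traces this is exactly the content of Proposition~3.1 of \cite{DaSz2016}, and the estimates below are the ones obtained in the course of its proof. Since $\bar R$ is smooth and positive definite on the compact set $\T^3\times[0,T]$, its eigenvalues are bounded below by a constant $c_1=c_1(\bar R)>0$; set $\eps_0:=c_1$. Given $0<\eps<\eps_0$ and $0<\gamma<1$, fix the scalar profile $\sigma(t)\equiv\tfrac34\eps$, so that $\bar R(x,t)-\tfrac13\sigma\,\Id$ has eigenvalues $\geq \tfrac34 c_1>0$ and takes values in a fixed compact subset of $\mathcal S^{3\times3}_+$. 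Let $\Phi$ be the flow of $\bar v$, i.e.\ $(\partial_t+\bar v\cdot\nabla)\Phi=0$ with $\Phi(\cdot,0)=\mathrm{id}$; it is smooth on $[0,T]$ and, $\bar v$ being divergence free, $\det\nabla\Phi\equiv 1$, so $\nabla\Phi$ is boundedly invertible. Exactly as in Section~\ref{s:perturbation}, but with $\bar v_q,\lambda_{q+1},\mathring{\overline R}_q$ replaced by $\bar v,\lambda,\bar R-\tfrac13\sigma\,\Id$ and with a single amplitude region in place of the family $\eta_i$, define via the Mikado flows of Lemma~\ref{l:Mikado} composed with $\lambda\Phi$ the principal perturbation $w_o:=(\nabla\Phi)^{-1}W(\tilde R,\lambda\Phi)$, where $\tilde R:=\nabla\Phi\,(\bar R-\tfrac13\sigma\Id)\,(\nabla\Phi)^T$ stays in a fixed compact subset $\mathcal N\subset\subset\mathcal S^{3\times3}_+$, together with its curl--type divergence corrector $w_c$ as in \eqref{e:w_curl}, and put $w:=w_o+w_c$, $v:=\bar v+w$. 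Then $\div v=0$, the building blocks of $w_o$ are Lie--advected by $\bar v$ (cf.\ \eqref{e:Lie_advection}), and by \eqref{e:MikadoWW}--\eqref{e:Mikado_stationarity} one has $w_o\otimes w_o=(\bar R-\tfrac13\sigma\Id)+\sum_{k\neq0}(\nabla\Phi)^{-1}C_k(\tilde R)(\nabla\Phi)^{-T}e^{i\lambda k\cdot\Phi}$.

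Next set
\[
S:=\RR\big(w\cdot\nabla\bar v\big)+\RR\big(\partial_t w+\bar v\cdot\nabla w\big)+\RR\div\big(w\otimes w-\bar R\big),
\]
the analogue of $\mathring R_{q+1}$ in \eqref{e:R:q+1} (all arguments of $\RR$ have zero spatial mean), and define the Reynolds stress $R:=\tfrac13\sigma(t)\,\Id+S$. Since $\sigma$ depends only on $t$, one verifies exactly as after \eqref{e:R:q+1}, with $p:=\bar p+\abs{w}^2-\sigma$, that $(v,p,R)$ solves \eqref{e:ER}. By Proposition~\ref{p:R} the tensor $S$ is symmetric, so $\tr R=\sigma+\tr S$ and the traceless part of $R$ is $\mathring R=S-\tfrac13(\tr S)\Id$.

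It remains to run the estimates, all of whose constants depend only on $(\bar v,\bar p,\bar R)$ --- through $\norm{\bar v}_{C^N},\norm{\bar R}_{C^N},c_1,T$ and the geometric constants of Lemma~\ref{l:Mikado}. From $\norm{w_o}_0\lesssim\norm{\bar R}_0^{\sfrac12}$, $\norm{w_o}_1\lesssim\lambda\norm{\bar R}_0^{\sfrac12}$ and $\norm{w_c}_0+\lambda^{-1}\norm{w_c}_1\lesssim\lambda^{-1}$ we get $\norm{v}_0+\lambda^{-1}\norm{v}_1\le C$, and since $w$ oscillates at frequency $\lambda$, $\norm{v-\bar v}_{H^{-1}}=\norm{w}_{H^{-1}}\lesssim\lambda^{-1}\norm{w}_{L^2}\le C\lambda^{-1}$. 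The crucial point for $S$ is that each of its three $\RR$--arguments is a frequency--$\lambda$ quantity of $C^0$--amplitude $\lesssim C$: in the transport term the factor $\lambda$ that would come from differentiating $e^{i\lambda k\cdot\Phi}$ is absent by the Lie--advection \eqref{e:Lie_advection}, and in the oscillation term it is absent because $C_k\,k=0$ (cf.\ \eqref{e:Ck_ind}). Hence the stationary--phase estimates for $\RR$ used in Section~\ref{sec:p_main} give $\norm{S}_{C^\gamma}\lesssim C\lambda^{\gamma-1}$, once the free integer $N$ there is chosen large --- there being no mollification here, the higher--norm ("$\lambda^{-N}$") terms are harmless. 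Therefore $\norm{\mathring R}_0\le C\lambda^{\gamma-1}$ and $\norm{\tr R}_0\le\norm{\sigma}_0+\norm{\tr S}_0\le\tfrac34\eps+C\lambda^{\gamma-1}\le\eps$ for $\lambda$ large. Writing $v\otimes v+R-\bar v\otimes\bar v-\bar R=(\bar v\otimes w+w\otimes\bar v)+(w\otimes w-\bar R+\tfrac13\sigma\Id)+S$, whose first two brackets are frequency--$\lambda$ tensors of $C^0$--amplitude $\lesssim C$, we obtain $\norm{v\otimes v+R-\bar v\otimes\bar v-\bar R}_{H^{-1}}\lesssim C\lambda^{-1}+\norm{S}_0\le C\lambda^{\gamma-1}$. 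Finally, in $\int_{\T^3}\abs{v}^2=\int\abs{\bar v}^2+2\int\bar v\cdot w+\int\abs{w}^2$ the term $\int\bar v\cdot w$ and the oscillatory part of $\int\abs{w}^2$ are $O(\lambda^{-N})$ by repeated integration by parts against $e^{i\lambda k\cdot\Phi}$, while $\fint_{\T^3} w_o\otimes w_o=\bar R-\tfrac13\sigma\Id$ modulo such terms and $\int\abs{w_c}^2+\abs{\int w_o\cdot w_c}=O(\lambda^{-1})$; hence $e(t)-\int_{\T^3}\abs{v}^2\,dx=\int_{\T^3}\sigma(t)\,dx+O(\lambda^{-1})=\tfrac34\eps+O(\lambda^{-1})\in[\tfrac\eps2,\eps]$ for $\lambda$ sufficiently large.

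The only genuinely non-routine part --- and the one dictating the design of $w$ and $R$ --- is the double cancellation keeping $\mathring R$ of the claimed size $\lambda^{\gamma-1}$: that $w_o\otimes w_o$ splits off exactly the target tensor $\bar R-\tfrac13\sigma\Id$ with a purely oscillatory remainder whose divergence carries no power of $\lambda$ (because $C_k\,k=0$), and that the transport error $\partial_t w+\bar v\cdot\nabla w$ likewise carries no power of $\lambda$ thanks to the Lie--advection of the Mikado building blocks. Both are precisely the mechanisms already in place in Section~\ref{s:perturbation} for the inductive step, so the proof amounts to running that machinery once with the smooth data $(\bar v,\bar p,\bar R)$ in place of $(\bar v_q,\bar p_q,\mathring{\overline R}_q)$ --- simpler here, since no mollification is involved --- and to retaining the scalar $\sigma(t)$ inside $\tr R$ so as to read off the energy bound.
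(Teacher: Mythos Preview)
Your sketch is essentially correct and, modulo one simplification, follows the route of \cite{DaSz2016}. Note that the paper itself does \emph{not} give a proof of Theorem~\ref{t:first_step}: it is stated as a variant of Proposition~3.1 in \cite{DaSz2016}, with the remark that it ``follows from the estimates in Section~5 used to prove the proposition in~\cite{DaSz2016}.'' So there is no independent argument in the paper to compare against; your write-up is in effect a self-contained reconstruction of that cited proof using the machinery of Sections~\ref{s:perturbation}--\ref{sec:p_main} of the present paper.

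The one difference worth flagging is your use of a \emph{single} backward flow $\Phi$ over the whole interval $[0,T]$, whereas the argument in \cite{DaSz2016} (and the perturbation step in Section~\ref{s:perturbation} here) employs a partition of unity in time with local flows $\Phi_i$ starting from nearby base times. In the iterative scheme that partition is essential, because one needs $\|\nabla\Phi_i-\Id\|_0\ll 1$ uniformly in $q$ (cf.\ \eqref{e:Phi-close-to-id}) to keep the constants under control. For a single step with smooth fixed data, however, your shortcut is legitimate: $\nabla\Phi$ and $(\nabla\Phi)^{-1}$ are bounded on $\T^3\times[0,T]$ by constants depending only on $\|\bar v\|_{C^1}$ and $T$, hence on $(\bar v,\bar p,\bar R)$; the congruence $\tilde R=\nabla\Phi\,(\bar R-\tfrac13\sigma\Id)\,(\nabla\Phi)^T$ then ranges in a fixed (data-dependent) compact set $\mathcal N\subset\subset\S^{3\times3}_+$, and all the geometric constants from Lemma~\ref{l:Mikado} and Proposition~\ref{p:R:oscillatory_phase} (through $\hat C$) are absorbed into the final constant $C$. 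This buys you a shorter argument at the cost of a worse (but still admissible) dependence of $C$ on the data. The remaining steps---the Lie-advection cancellation in the transport error, the $C_k k=0$ cancellation in the oscillation error, the bookkeeping with $\sigma=\tfrac34\eps$ in $\tr R$ and the energy---are exactly as in Section~\ref{s:perturbation} and go through as you indicate.
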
 

\begin{proof}[Proof of Theorem \ref{t:h-principle}]
Fix $k \geq 1$ and let $\eps_k<\eps_0$. We apply Theorem \ref{t:first_step} with $\gamma=\alpha$ and $\lambda=\lambda_0$, where here $(\alpha,\lambda_0)$ are given in the statement of Proposition \ref{p:main},  and where we take $a$ sufficiently large such that $\lambda_0$ is sufficiently large (in terms of $\eps_k$ and $(\bar v, \bar p, \bar R)$), so that the hypothesis of Theorem \ref{t:first_step} is satisfied. We obtain $( {v}, {p}, {R})$ satisfying
\begin{align}
 \|v-\overline v\|_{H^{-1}}\leq &C \lambda_0^{-1} \label{e:h_v_diff}\\
 \|v\|_{0}+\lambda_0^{-1}\|v\|_{1}\leq &C \label{e:h_v_D} \\
 \norm{v\otimes v+ R -\bar{v}\otimes\bar{v}-\bar{R}}_{H^{-1}}
  \leq& C\lambda_0^{\alpha-1} \label{e:h_vov_diff}\\
 \|\mathring R \|_{0}\leq &C \lambda_0^{\alpha-1} \label{e:h_R_circ} \\
 \norm{ \tr R}_0 \leq& \eps_k \label{e:h_R_tr} \, ,
 \end{align}
 and the function $  e(t)$ as defined by \eqref{e:h_e} obeys
 \begin{align}
\frac{\eps_k}{2} \leq     e(t)-\int_{\T^3}\abs{  v}^2\,dx  \leq  \eps_k\,.
\label{e:tilde_e_est}
\end{align}

Analogous to the proof of Theorem \ref{t:energy_profile}, we set
\[\Gamma=\frac{\delta_1^{\sfrac{1}{2}}}{\eps_k^{\sfrac 12}}\]
and rescale $( {v}, {p}, {R})$ to obtain
\begin{align*}
\widetilde v_0(x,t):=\Gamma v(x,\Gamma t ),\qquad \widetilde p_0(x,t):=\Gamma^2p  (x,\Gamma t )\quad\mbox{ and}\qquad
   {\widetilde R_0}(x,t):=\Gamma^2  R (x,\Gamma t )\,,
\end{align*}
so that $(v_0,p_0, {R}_0)$ also solves \eqref{e:ER}. Moreover, we have the estimates
\begin{align}
  \|\widetilde v_0\|_{0}+\lambda_0^{-1}\|\widetilde v_0\|_{1}\leq &\frac{C \delta_1^{\sfrac12}}{\eps_k^{\sfrac12}}  \label{e:h_tilde_v0} \\
  \|\mathring{\widetilde R_0} \|_{0}\leq &\frac{C \delta_1}{\eps_k \lambda_0^{1-\alpha}} \,. \notag
\end{align}
Choosing $\alpha$ sufficiently small and choosing $a$ sufficiently large depending on $\eps_k$, $C$, and $M$, we obtain
\[
\frac{C \delta_1^{\sfrac12}}{\eps_k^{\sfrac12}}\leq \min(M\delta_0^{\sfrac12},1-\delta_0) \quad\mbox{and}\quad \frac{C}{\eps_k \lambda_0^{1-\alpha}} \leq \lambda_0^{-3\alpha}\,.
\]
from which we obtain \eqref{e:R_q_inductive_est}, \eqref{e:v_q_inductive_est}, and \eqref{e:v_q_0}.

If in addition we set
\[
\tilde e(t)=\Gamma^2   e(\Gamma t)\,
\]
then from \eqref{e:tilde_e_est} we obtain
\[\frac{\delta_1}{2} \leq   \tilde e(t)-\int_{\T^3}\abs{\widetilde  v_0}^2\,dx\leq \delta_1\,,\]
and hence we obtain \eqref{e:energy_inductive_assumption} for $q=0$. Letting $a$ be sufficiently large, we also obtain \eqref{e:energy_time_D}.

Applying Proposition  \ref{p:main} and arguing as was done in the proof of Theorem \ref{t:energy_profile} we obtain a solution $(\widetilde v ,\widetilde p )$ to the Euler equations satisfying 
\begin{align}
\int_{\T^3}|\widetilde v |^2\,dx=\tilde e(t)\,.
\label{e:h_ess}
\end{align}
Moreover, by \eqref{e:v_diff_prop_est} we have the estimate
\begin{align}
\norm{\widetilde v - \widetilde v_0}_0\lesssim \delta_1^{\sfrac12}\,.
\label{e:h_v_diff2}
\end{align}

Lastly, we define $(v_k, p_k)$ by the rescaling
\[
v_k:=\Gamma^{-1}\widetilde v (x,\Gamma^{-1}t)\quad\mbox{and}\quad p_k:=\Gamma^{-2}\widetilde p (x,\Gamma^{-1}t)\,.
\]
Then $(v_k,p_k)$ is a solution to the Euler equations, satisfying \eqref{e:h-principle_energy} as a consequence of rescaling \eqref{e:h_ess}. The sequence $v_k$ is uniformly bounded in $C^0$ since
\begin{align*}
\norm{v_k}_0 \leq \Gamma^{-1} ( \norm{\widetilde v}_0 + \norm{\widetilde v - \widetilde v_0}_0) \lesssim \eps_k^{\sfrac 12} \delta_{1}^{-\sfrac 12} ( \delta_1^{\sfrac 12} + C \delta_1^{\sfrac 12} \eps_k^{-\sfrac 12}) \lesssim \eps_0^{\sfrac 12} + C.
\end{align*}
Thus $(v_k \otimes v_k)$ is also uniformly bounded in $C^0$. By Banach-Alaoglu $v_k$ and $v_k \otimes v_k$ have weak$-*$ convergent subsequences.

Moreover,  by rescaling \eqref{e:h_v_diff2} and using \eqref{e:h_v_diff} we have
\begin{align}
\|v_k-\overline v\|_{H^{-1}}\lesssim \norm{v_k - v }_{0} + \norm{v - \overline v}_{H^{-1}} \lesssim \Gamma^{-1} \delta_1^{\sfrac 12} + C \lambda_0^{-1} \lesssim  \eps_k^{\sfrac 12} + C \lambda_0^{-1} \lesssim   \eps_k^{\sfrac 12}
\label{e:h_v_H-1}
\end{align}
by choosing $a$ (and thus $\lambda_0$) sufficiently large in terms of $\eps_k$.
Moreover, from \eqref{e:h_vov_diff}--\eqref{e:h_R_tr}, \eqref{e:h_tilde_v0}, and \eqref{e:h_v_diff2} we obtain
\begin{align}
\norm{v_k \otimes v_k - v \otimes v -\bar R }_{H^{-1}} 
&\lesssim \norm{v_k \otimes v_k - v\otimes v}_{0} + \norm{R}_{0} + \norm{v\otimes v + R - \bar v \otimes \bar v - \bar R}_{H^{-1}} \notag\\
&\lesssim\Gamma^{-2} \norm{\widetilde v \otimes \widetilde v - \widetilde v_0 \otimes \widetilde v_0}_{0}  + \norm{\mathring R}_{0}  + \norm{\tr R}_0 + C \lambda_0^{\alpha-1} \notag\\
&\lesssim\eps_k \delta_1^{-1/2}  (C \delta_{1}^{\sfrac 12} \eps_k^{- \sfrac 12} + \delta_1^{\sfrac 12})+ \eps_k +  C \lambda_0^{\alpha-1}
\lesssim C \eps_k^{\sfrac 12}.
\label{e:h_vov_H-1}
\end{align}
Since the $H^{-1}$ topology uniquely captures the weak$-*$ limit, the theorem is completed upon passing $\eps_k \to 0$ in \eqref{e:h_v_H-1}--\eqref{e:h_vov_H-1}.
\end{proof}

\appendix

\section{H\"older spaces}\label{s:hoelder}

In the following $m=0,1,2,\dots$, $\alpha\in (0,1)$, and $\theta$ is a multi-index. We introduce the usual (spatial) 
H\"older norms as follows.
First of all, the supremum norm is denoted by $\|f\|_0:=\sup_{\T^3\times [0,1]}|f|$. We define the H\"older seminorms 
as
\begin{equation*}
\begin{split}
[f]_{m}&=\max_{|\theta|=m}\|D^{\theta}f\|_0\, ,\\
[f]_{m+\alpha} &= \max_{|\theta|=m}\sup_{x\neq y, t}\frac{|D^{\theta}f(x, t)-D^{\theta}f(y, t)|}{|x-y|^{\alpha}}\, ,
\end{split}
\end{equation*}
where $D^\theta$ are {\em space derivatives} only.
The H\"older norms are then given by
\begin{eqnarray*}
\|f\|_{m}&=&\sum_{j=0}^m[f]_j\\
\|f\|_{m+\alpha}&=&\|f\|_m+[f]_{m+\alpha}.
\end{eqnarray*}
Moreover, we will write $[f (t)]_\alpha$ and $\|f (t)\|_\alpha$ when the time $t$ is fixed and the
norms are computed for the restriction of $f$ to the $t$-time slice.

Recall the following elementary inequalities:
\begin{equation}\label{e:Holderinterpolation}
[f]_{s}\leq C\bigl(\varepsilon^{r-s}[f]_{r}+\varepsilon^{-s}\|f\|_0\bigr)
\end{equation}
for $r\geq s\geq 0$, $\eps>0$, and 
\begin{equation}\label{e:Holderproduct}
[fg]_{r}\leq C\bigl([f]_r\|g\|_0+\|f\|_0[g]_r\bigr)
\end{equation}
for $r\geq 0$. From \eqref{e:Holderinterpolation} with $\eps=\|f\|_0^{\sfrac{1}{r}}[f]_r^{-\sfrac{1}{r}}$ we obtain the 
standard interpolation inequalities
\begin{equation}\label{e:Holderinterpolation2}
[f]_{s}\leq C\|f\|_0^{1-\sfrac{s}{r}}[f]_{r}^{\sfrac{s}{r}}.
\end{equation}

Next we collect two classical estimates on the H\"older norms of compositions. These are also standard, for instance
in applications of the Nash-Moser iteration technique (for a detailed proof the reader might consult \cite[Proposition 4.1]{DlSzJEMS}).

\begin{proposition}\label{p:chain}
Let $\Psi: \Omega \to \mathbb R$ and $u: \R^n \to \Omega$ be two smooth functions, with $\Omega\subset \R^N$. 
Then, for every $m\in \mathbb N \setminus \{0\}$ there is a constant $C$ (depending only on $m$,
$N$ and $n$) such that
\begin{align}
\left[\Psi\circ u\right]_m &\leq C ([\Psi]_1 \|Du\|_{m-1}+\|D\Psi\|_{m-1} \|u\|_0^{m-1} \|u\|_m)\label{e:chain0}\\
\left[\Psi\circ u\right]_m &\leq C ([\Psi]_1 \|Du\|_{m-1}+\|D\Psi\|_{m-1} [u]_1^{m} )\, .
\label{e:chain1}
\end{align} 
\end{proposition}

We also recall the quadratic commutator estimate of~\cite{CoETi1994} 
(cf. also \cite[Lemma 1]{CoDLSz2012}):

\begin{proposition}\label{p:CET}
Let $f,g\in C^{\infty}(\T^3\times\T)$ and $\psi$ a standard radial smooth and compactly supported kernel. For any $r\geq 0$  we have the estimate
\[
\Bigl\|(f*\psi_\ell)( g*\psi_\ell)-(fg)*\psi_\ell\Bigr\|_r\leq C\ell^{2-r}  \|f\|_1\|g\|_1 \, ,
\]
where the constant $C$ depends only on $r$.
\end{proposition}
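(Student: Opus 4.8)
The plan is to reduce the whole statement to the Constantin--E--Titi pointwise commutator identity, after which the bound for every $r\geq 0$ will follow by placing \emph{all} spatial derivatives on the mollifier $\psi_\ell$ rather than on $f$ or $g$. First I would record the identity: using that $\psi_\ell$ has integral one and compact support, and expanding the integrand (each of the four resulting terms integrates against $\psi_\ell(y)\psi_\ell(z)$ to either $(fg)*\psi_\ell$ or $(f*\psi_\ell)(g*\psi_\ell)$), one checks for each fixed time that
\[
(f*\psi_\ell)(x)(g*\psi_\ell)(x)-(fg)*\psi_\ell(x) = -\frac12\int_{\T^3}\int_{\T^3}\psi_\ell(y)\psi_\ell(z)\bigl(f(x-y)-f(x-z)\bigr)\bigl(g(x-y)-g(x-z)\bigr)\,dy\,dz\, .
\]
After the change of variables $u=x-y$, $v=x-z$ the $x$-dependence sits only in the two kernels, so for any multi-index $\theta$ with $|\theta|=m$ the Leibniz rule gives
\[
D^\theta\bigl[(f*\psi_\ell)(g*\psi_\ell)-(fg)*\psi_\ell\bigr](x)=-\frac12\sum_{\theta'\leq\theta}\binom{\theta}{\theta'}\int\!\!\int (D^{\theta'}\psi_\ell)(x-u)(D^{\theta-\theta'}\psi_\ell)(x-v)\bigl(f(u)-f(v)\bigr)\bigl(g(u)-g(v)\bigr)\,du\,dv\, .
\]

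Next I would estimate. Reverting to the variables $y=x-u$, $z=x-v$, the integrand of each summand is supported where $|y|,|z|\lesssim\ell$, hence $|y-z|\lesssim\ell$, so $|f(x-y)-f(x-z)|\leq[f]_1|y-z|\lesssim\ell\,\|f\|_1$ and likewise $|g(x-y)-g(x-z)|\lesssim\ell\,\|g\|_1$. Combining this with $\|D^{\theta'}\psi_\ell\|_{L^1}\lesssim\ell^{-|\theta'|}$ bounds each summand by $\ell^{-|\theta'|}\ell^{-|\theta-\theta'|}\ell^{2}\|f\|_1\|g\|_1=\ell^{2-m}\|f\|_1\|g\|_1$; summing over $\theta'$ and over the lower-order terms in the norm (which only produce smaller powers, since $\ell\leq 1$) yields $\|(f*\psi_\ell)(g*\psi_\ell)-(fg)*\psi_\ell\|_m\lesssim\ell^{2-m}\|f\|_1\|g\|_1$ for every non-negative integer $m$. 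For a general $r=m+\alpha$ with $m=\lfloor r\rfloor$ and $0<\alpha<1$ I would then interpolate via \eqref{e:Holderinterpolation} between the integer cases $m$ and $m+1$, obtaining for the commutator $h$ the bound $[h]_{m+\alpha}\lesssim\|h\|_m^{1-\alpha}\|h\|_{m+1}^{\alpha}\lesssim\ell^{(2-m)(1-\alpha)+(1-m)\alpha}\|f\|_1\|g\|_1=\ell^{2-r}\|f\|_1\|g\|_1$, which is the desired estimate.

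The only genuinely delicate point — and the one place where a naive approach breaks down — is that one must \emph{not} distribute $D^\theta$ onto the factors $f$ and $g$: doing so would force $\|f\|_{m+1}$ (or still higher norms) into the bound, whereas the gain of two powers of $\ell$ against only $\|f\|_1\|g\|_1$ comes precisely from differencing $f$ and $g$ a single time and letting every spatial derivative fall on $\psi_\ell$, at a cost of $\ell^{-1}$ each. Everything else is routine bookkeeping with standard mollifier $L^1$-estimates and Hölder interpolation.
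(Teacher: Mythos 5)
Your argument is correct and is exactly the classical Constantin--E--Titi commutator proof: the paper does not prove Proposition \ref{p:CET} itself but cites \cite{CoETi1994} and \cite[Lemma 1]{CoDLSz2012}, where the same mechanism (a symmetrized difference identity, all derivatives landing on the mollifier at cost $\ell^{-1}$ each, and Hölder interpolation for fractional $r$) is used. The identity, the Leibniz step, and the $\ell^{2-m}$ bookkeeping all check out (with the implicit standing assumption $\ell\leq 1$, which holds throughout the paper), so there is nothing to add.
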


\section{Estimates for transport equations}\label{s:transport_equation}

In this section we recall some well known results regarding smooth solutions of
the \emph{transport equation}:
\begin{equation}\label{e:transport}
\left\{\begin{array}{l}
\partial_t f + v\cdot  \nabla f =g,\\ \\
f(\cdot,0)=f_0,
\end{array}\right.
\end{equation}
where $v=v(t,x)$ is a given smooth vector field. 
We will consider solutions
on the entire space $\R^3$ and treat solutions on the torus simply as periodic solution in $\R^3$. The following proposition contains standard estimates for such solutions (for a detailed proof, the reader might consult \cite[Appendix D]{BuDLeIsSz2015}).

\begin{proposition}\label{p:transport_derivatives}
Assume $\abs{t}\norm{v}_1\leq 1$. Then,  any solution $f$ of \eqref{e:transport} satisfies 
\begin{align}
\|f (t)\|_0 &\leq \|f_0\|_0 + \int_{t_0}^t \|g (\cdot, \tau)\|_0\, d\tau\,,\label{e:max_prin}\\
\norm{f(t)}_{\alpha} &\leq 2\left(\norm{f_0}_{\alpha} + \int_{t_0}^t  \norm{g (\cdot, \tau)}_{\alpha}\, d\tau\right)\,,\label{e:trans_est_alpha}
\end{align}
for all $0\leq \alpha\leq 1$, and, more generally, for any $N\geq 1$ and $0\leq \alpha< 1$
\begin{align}
[f (t)]_{N+\alpha} & \lesssim [ f_0]_{N+\alpha} + \abs{t}[v]_{N+\alpha}[f_0]_1  +\int_{0}^t \Bigl([g (\tau)]_{N+\alpha} + (t-\tau) [v ]_{N+\alpha} [g (\tau)]_{1}\Bigr)\,d\tau.
\label{e:trans_est_1}
\end{align}
Define $\Phi (t, \cdot)$ to be the inverse of the flux $X$ of $v$ starting at time $t_0$ as the identity
(i.e. $\sfrac{d}{dt} X = v (X,t)$ and $X (x, t_0 )=x$). Under the same assumptions as above we have:
\begin{align}
\norm{\nabla\Phi (t) -\Id}_0&\lesssim \abs{t}[v]_1\,,  \label{e:Dphi_near_id}\\
[\Phi (t)]_N &\lesssim \abs{t}[v]_N \qquad \forall N \geq 2\, .\label{e:Dphi_N}
\end{align}
\end{proposition}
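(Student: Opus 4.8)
The plan is to reduce every estimate to the method of characteristics together with Grönwall's inequality and the composition and interpolation estimates of Appendix~\ref{s:hoelder}. Let $X(x,t)$ be the flow of $v$ with $X(x,0)=x$, so that $\Phi(\cdot,t)=X(\cdot,t)^{-1}$; more generally write $\Phi_{t\to s}(x)$ for the position at time $s$ of the characteristic through $x$ at time $t$, and note that $\Phi_{t\to s}$ is again the flow of a vector field with the same spatial Hölder norms as $v$, so any bound for $\Phi=\Phi_{t\to 0}$ transfers to $\Phi_{t\to s}$ with $|t|$ replaced by $|t-s|$. Along a characteristic one has $\frac{d}{ds}f(X(x,s),s)=g(X(x,s),s)$, hence
\[
f(X(x,t),t)=f_0(x)+\int_0^t g(X(x,s),s)\,ds\,;
\]
taking suprema (and using that $X(\cdot,t)$ is a bijection of $\T^3$) yields \eqref{e:max_prin}. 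For \eqref{e:trans_est_alpha} one compares two characteristics: since $\frac{d}{ds}|X(x,s)-X(y,s)|\le [v]_1|X(x,s)-X(y,s)|$ one gets $|X(x,s)-X(y,s)|\le e^{|t-s|[v]_1}|X(x,t)-X(y,t)|$, and inserting this into the displayed identity evaluated at the points $x'=X(x,t)$, $y'=X(y,t)$ gives $[f(t)]_\alpha\le e^{|t|[v]_1}\big([f_0]_\alpha+\int_0^t[g(s)]_\alpha\,ds\big)$; since $|t|[v]_1\le 1$ the exponential is a universal constant, and adding \eqref{e:max_prin} yields \eqref{e:trans_est_alpha}.

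Next I would establish the flow estimates. Differentiating $X(x,t)=x+\int_0^t v(X(x,s),s)\,ds$ in $x$ gives $\nabla X(x,t)=\Id+\int_0^t\nabla v(X(x,s),s)\nabla X(x,s)\,ds$, so Grönwall yields $\|\nabla X(t)\|_0\le e^{|t|[v]_1}\lesssim 1$ and hence $\|\nabla X(t)-\Id\|_0\lesssim|t|[v]_1$; the same holds for $\Phi$, which is \eqref{e:Dphi_near_id}. For \eqref{e:Dphi_N}, fix $|\theta|=N\ge 2$ and differentiate the integral equation: $\partial^\theta X(x,t)=\int_0^t\big(\nabla v(X)\,\partial^\theta X+E_\theta\big)\,ds$, where by the chain-rule estimate \eqref{e:chain1} the remainder $E_\theta$ involves only derivatives of $v$ up to order $N$ and derivatives of $X$ up to order $N-1$. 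Proceeding by induction on $N$, using $\|\nabla X(s)\|_0\lesssim 1$, the inductive bounds on the lower-order derivatives of $X$, the interpolation inequalities \eqref{e:Holderinterpolation2} and \eqref{e:Holderproduct} and $|t|[v]_1\le 1$ to absorb all intermediate products, and then Grönwall, one obtains $[X(t)]_N\lesssim|t|[v]_N$; the same argument (or the identity $\nabla\Phi=(\nabla X)^{-1}\circ\Phi$) gives \eqref{e:Dphi_N} for $\Phi$, and running the identical argument with a fractional exponent gives $[\Phi(t)]_{N+\alpha}\lesssim|t|[v]_{N+\alpha}$ for all $N\ge 1$, $0\le\alpha<1$.

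Finally, for \eqref{e:trans_est_1} I would use the characteristic representation
\[
f(x,t)=f_0\big(\Phi_{t\to 0}(x)\big)+\int_0^t g\big(\Phi_{t\to s}(x),s\big)\,ds\,,
\]
and bound the $C^{N+\alpha}$ seminorm of each composition by a Hölder version of the composition estimate \eqref{e:chain1}: with $\|\nabla\Phi_{t\to s}\|_0\lesssim 1$ and $[\Phi_{t\to s}]_{N+\alpha}\lesssim|t-s|[v]_{N+\alpha}$ from the previous step, one gets $[f_0\circ\Phi_{t\to 0}(t)]_{N+\alpha}\lesssim[f_0]_{N+\alpha}+|t|[v]_{N+\alpha}[f_0]_1$ and, for each $s$, $[g(\Phi_{t\to s}(\cdot),s)]_{N+\alpha}\lesssim[g(s)]_{N+\alpha}+(t-s)[v]_{N+\alpha}[g(s)]_1$; integrating the latter in $s$ and summing yields \eqref{e:trans_est_1}. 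The only genuinely delicate point is the induction for \eqref{e:Dphi_N}: it must be arranged so that the top-order norm $[v]_{N+\alpha}$ enters the bound linearly (times a power of $|t|$) rather than being exponentiated. This is exactly what the interpolation inequalities buy us, since in the Faà di Bruno expansion every term other than $\nabla v(X)\,\partial^\theta X$ has all of its factors of strictly lower order, hence is controlled by $[v]_{N+\alpha}$ times quantities already bounded by the inductive hypothesis, while the single top-order term is absorbed by Grönwall. A fully detailed proof along these lines is given in \cite[Appendix D]{BuDLeIsSz2015}.
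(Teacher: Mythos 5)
Your argument is correct and is precisely the standard characteristics-plus-Gr\"onwall proof that the paper itself omits, deferring instead to \cite[Appendix D]{BuDLeIsSz2015}; the genuinely delicate points --- keeping the top-order norm $[v]_{N+\alpha}$ linear rather than exponentiated by isolating the single term $\nabla v(X)\,\partial^\theta X$ in the Fa\`a di Bruno expansion and controlling the remaining terms by interpolation together with $|t|[v]_1\le 1$, and transferring the flow bounds to $\Phi_{t\to s}$ --- are all identified and handled correctly. The only blemish is that your Gr\"onwall factor $e^{\alpha|t|[v]_1}$ yields a constant up to $e$ rather than the stated $2$ in \eqref{e:trans_est_alpha} when $\alpha$ is close to $1$, which is immaterial here since that estimate is only ever invoked with $\lesssim$.
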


\section{Potential theory estimates}
We recall the definition of the standard class of periodic Calder{\'o}n-Zygmund operators.
Let $K$ be an $\R^3$ kernel which obeys the properties
\begin{itemize}
\item $ K(z) = \Omega\left(\frac{z}{|z|}\right) |z|^{-3} $, for all $z\in\R^3 \setminus \{0\}$ 
\item $\Omega \in C^\infty({\mathbb S}^2)$
\item $\int_{|\hat z|=1} \Omega(\hat z) d\hat z = 0$.
\end{itemize}
From the $\R^3$ kernel $K$, use Poisson summation to define the periodic kernel 
\begin{align*}
K_{\T^3}(z) =  K(z) + \sum_{\ell \in {\mathbb Z}^3 \setminus \{0\}} \left( K(z+\ell) - K(\ell) \right).
\end{align*}
Then the operator
\begin{align*}
T_K f(x) = p.v. \int_{\T^3} K_{\T^3}(x-y) f(y) dy
\end{align*}
is a $\T^3$-periodic Calder{\'o}n-Zygmund operator, acting on $\T^3$-periodic functions $f$ with zero mean on $\T^3$.
The following proposition, proving the boundedness of periodic Calder{\'o}n-Zygmund operators on periodic H\"older spaces is classical (see e.g.~\cite{CaZy1954}):
\begin{proposition}
\label{p:CZO_C_alpha}
Fix $\alpha \in (0,1)$. Periodic Calder{\'o}n-Zygmund operators are bounded on the space of zero mean $\T^3$-periodic $C^\alpha$ functions. 
\end{proposition}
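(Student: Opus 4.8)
The plan is to reduce the statement to the two bounds $\|T_K f\|_0 \lesssim \|f\|_\alpha$ and $[T_K f]_\alpha \lesssim [f]_\alpha$ for every zero-mean $f\in C^\alpha(\T^3)$, which together give the asserted boundedness on $C^\alpha$. The starting observation is the cancellation identity $\mathrm{p.v.}\int_{\T^3} K_{\T^3}(w)\,dw = 0$, which follows from $\int_{\mathbb{S}^2}\Omega\,d\hat z = 0$ together with the homogeneity of $K$ and the fact that the Poisson-summation corrections $K(z+\ell)-K(\ell)$ are bounded and summable. Using it one rewrites, for every $x$,
\[
T_K f(x) = \mathrm{p.v.}\int_{\T^3} K_{\T^3}(x-z)\bigl(f(z)-f(x)\bigr)\,dz\,.
\]
Since $|K_{\T^3}(w)|\lesssim |w|^{-3}$ near the origin in a fundamental domain and is bounded away from it, while $|f(z)-f(x)|\le [f]_\alpha|x-z|^\alpha$, the integrand is now absolutely integrable, and the $C^0$ bound $|T_K f(x)|\lesssim [f]_\alpha + \|f\|_0$ is immediate.

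For the seminorm, fix $x\ne y$, set $r=|x-y|$ (we may assume $r$ is small, the other case being trivial), and split $\T^3$ into $B=B(x,2r)$ and its complement. Writing $f(z)-f(y)=(f(z)-f(x))+(f(x)-f(y))$ one decomposes
\[
T_K f(x) - T_K f(y) = A_1 - A_2 + A_3 + A_4\,,
\]
with $A_1=\int_B K_{\T^3}(x-z)(f(z)-f(x))\,dz$, $A_2=\int_B K_{\T^3}(y-z)(f(z)-f(y))\,dz$, $A_3=\int_{\T^3\setminus B}\bigl(K_{\T^3}(x-z)-K_{\T^3}(y-z)\bigr)(f(z)-f(x))\,dz$, and $A_4=(f(x)-f(y))\int_{\T^3\setminus B}K_{\T^3}(y-z)\,dz$. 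For $A_1,A_2$ one pairs the $|w|^{-3}$ singularity of the kernel against the H\"older decay of the increment of $f$ (noting $|z-y|\le 3r$ on $B$), getting $|A_1|+|A_2|\lesssim [f]_\alpha\int_{|w|\le 3r}|w|^{\alpha-3}\,dw\lesssim [f]_\alpha r^\alpha$. For $A_3$ one uses the gradient estimate $|K_{\T^3}(x-z)-K_{\T^3}(y-z)|\lesssim |x-y|\,|x-z|^{-4}$, valid since $|x-z|\ge 2r=2|x-y|$ off $B$, whence $|A_3|\lesssim [f]_\alpha\, r\int_{2r\le|x-z|\lesssim 1}|x-z|^{\alpha-4}\,dz\lesssim [f]_\alpha\, r\cdot r^{\alpha-1}=[f]_\alpha r^\alpha$. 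Finally, since $\mathrm{p.v.}\int_{\T^3}K_{\T^3}(y-z)\,dz=0$ we have $\int_{\T^3\setminus B}K_{\T^3}(y-z)\,dz=-\mathrm{p.v.}\int_B K_{\T^3}(y-z)\,dz$, and the latter is bounded by a geometric constant because $B$ differs from the ball $B(y,r)$, on which the principal value of $K$ vanishes by the sphere cancellation, only by a set contained in the annulus $\{r\le|w|\le 3r\}$; hence $|A_4|\lesssim|f(x)-f(y)|\lesssim [f]_\alpha r^\alpha$. Summing the four contributions gives $[T_K f]_\alpha\lesssim [f]_\alpha$.

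The genuinely delicate points, and the ones I would write out carefully, are (i) the cancellation identities $\mathrm{p.v.}\int_{\T^3}K_{\T^3}=0$ and the vanishing of $\int K$ over annuli used in $A_4$, which rest on $\int_{\mathbb{S}^2}\Omega=0$ and on controlling the periodization corrections, and (ii) the kernel gradient bound $|\nabla K_{\T^3}(w)|\lesssim |w|^{-4}+C$ on a fundamental domain, obtained by differentiating $K(w)=\Omega(w/|w|)|w|^{-3}$ (this is where $\Omega\in C^\infty(\mathbb{S}^2)$ is used) and noting that $\sum_{\ell\ne 0}\nabla K(w+\ell)$ converges to a bounded function for $w$ in a fundamental domain. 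Everything else is the standard Calder\'on--Zygmund argument, and one may alternatively simply invoke \cite{CaZy1954}.
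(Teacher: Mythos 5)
Your write-up is the standard Calder\'on--Zygmund H\"older estimate, and it is essentially correct. Note that the paper itself offers no proof of this proposition: it declares the statement classical and cites \cite{CaZy1954}, which is precisely the alternative you mention in your closing sentence, so what you have done is supply the details the paper omits rather than diverge from it. One inaccuracy worth fixing: the identity $\mathrm{p.v.}\int_{\T^3}K_{\T^3}(w)\,dw=0$ need not hold. The cancellation $\int_{\mathbb{S}^2}\Omega=0$ kills the integral of $K$ over balls and annuli centred at the origin, but the fundamental domain is a cube, and neither $\int_{Q\setminus B(0,1/2)}K$ nor the sum of the periodization corrections $\sum_{\ell\neq 0}\int_Q\bigl(K(z+\ell)-K(\ell)\bigr)\,dz$ is forced to vanish. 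This is harmless: the principal value equals some finite constant $c$ depending only on $K$, your rewriting acquires an extra term $c\,f(x)$, and the resulting contribution $c\,(f(x)-f(y))$ to the difference is trivially $\lesssim[f]_\alpha r^\alpha$; the same caveat applies in $A_4$, where the correct statement is $\int_{\T^3\setminus B}K_{\T^3}(y-z)\,dz=c-\mathrm{p.v.}\int_BK_{\T^3}(y-z)\,dz$, still bounded by your ball-plus-annulus argument once the bounded periodization corrections (contributing $O(r^3)$ over $B$) are absorbed. Finally, your estimates never use the zero-mean hypothesis; that hypothesis only serves to make $T_K$ a well-defined endomorphism of the periodic function space (the multiplier at frequency zero being irrelevant), so its absence from the argument is not a gap.
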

 
The following is a simple consequence of classical stationary phase techniques. For a detailed proof the reader might consult \cite[Lemma 2.2]{DaSz2016}.
\begin{proposition}
\label{p:R:oscillatory_phase}
Let  $\alpha \in(0,1)$ and $N \geq 1$. Let $a \in C^\infty(\T^3)$, $\Phi\in C^\infty(\T^3;\R^3)$ be smooth functions and assume that
\begin{equation*}
\hat{C}^{-1}\leq \abs{\nabla \Phi} \leq \hat{C}
\end{equation*}
holds on $\T^3$. Then 
\begin{equation}\label{e:phase_integration}
\abs{\int_{\T^3}a(x)e^{ik\cdot \Phi}\,dx}\lesssim  \frac{ \norm{a}_{N}+\norm{a}_{0}\norm{\Phi}_{N}}{|k|^{N}} \, ,
\end{equation}
and for the operator $\RR$ defined in \eqref{e:R:def}, we have
 \begin{align*}
\norm{ {\mathcal R} \left(a(x) e^{i k\cdot \Phi} \right)}_{\alpha} 
&\lesssim \frac{  \norm{a}_{0}}{|k|^{1-\alpha}} +   \frac{ \norm{a}_{N+\alpha}+\norm{a}_{0}\norm{\Phi}_{N+\alpha}}{|k|^{N-\alpha}} \, ,
\end{align*}
where the implicit constant depends on $\hat{C}$, $\alpha$ and $N$, but not on $k$.
\end{proposition}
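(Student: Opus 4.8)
The plan is to prove both bounds by the classical non-stationary phase argument — repeated integration by parts against the oscillatory factor $e^{ik\cdot\Phi}$, exploiting the non-degeneracy $\hat C^{-1}\le|\nabla\Phi|\le\hat C$ — which is essentially the content of \cite[Lemma~2.2]{DaSz2016}. For \eqref{e:phase_integration} I would argue as follows. Since $\nabla(e^{ik\cdot\Phi})=i\,e^{ik\cdot\Phi}(\nabla\Phi)^{T}k$, the vector field $b:=(\nabla\Phi)^{T}k/|(\nabla\Phi)^{T}k|^{2}$ satisfies $b\cdot\nabla(e^{ik\cdot\Phi})=i\,e^{ik\cdot\Phi}$, so that integrating by parts on $\T^3$ (no boundary terms)
\[
\int_{\T^3} a\,e^{ik\cdot\Phi}\,dx=-i\int_{\T^3} a\,b\cdot\nabla(e^{ik\cdot\Phi})\,dx=i\int_{\T^3}\div(a\,b)\,e^{ik\cdot\Phi}\,dx,
\]
and iterating $N$ times gives $\int_{\T^3}a\,e^{ik\cdot\Phi}\,dx=i^{N}\int_{\T^3}L^{N}(a)\,e^{ik\cdot\Phi}\,dx$, where $L(f):=\div(f\,b)$. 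The non-degeneracy forces $|(\nabla\Phi)^{T}k|\gtrsim|k|$, so $\|b\|_{m}\lesssim|k|^{-1}(1+\|\Phi\|_{m+1})$ after writing $(\nabla\Phi)^{-1}=(\det\nabla\Phi)^{-1}\,\mathrm{cof}(\nabla\Phi)^{T}$ and invoking the chain-rule estimates of Proposition~\ref{p:chain} together with the uniform lower bound on $|\det\nabla\Phi|$. Thus each application of $L$ gains a factor $|k|^{-1}$ while moving at most one derivative onto $a$ or onto $\Phi$; estimating $L^{N}(a)$ with the product rule and reabsorbing mixed terms via interpolation \eqref{e:Holderinterpolation2} and Young's inequality yields $\|L^{N}(a)\|_{0}\lesssim|k|^{-N}(\|a\|_{N}+\|a\|_{0}\|\Phi\|_{N})$, which is \eqref{e:phase_integration}.

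For the bound on $\mathcal R(a\,e^{ik\cdot\Phi})$ I would recycle the same integration by parts at the level of the integrand: componentwise one has, for every $N\ge1$,
\[
a\,e^{ik\cdot\Phi}=\sum_{j=1}^{N}c_{j}\,\div\!\big(L^{j-1}(a)\otimes b\;e^{ik\cdot\Phi}\big)+i^{N}L^{N}(a)\,e^{ik\cdot\Phi}
\]
for suitable unimodular constants $c_{j}$. Now, by \eqref{e:R:def}, $\mathcal R$ applied to a divergence is a classical Calderón–Zygmund operator of order zero applied to the corresponding (tensor) potential, hence bounded on $C^{\alpha}$ by Proposition~\ref{p:CZO_C_alpha}; while $\mathcal R$ itself is of order $-1$, so it maps $C^{0}$ into $C^{1-\varepsilon}\hookrightarrow C^{\alpha}$ for $\varepsilon>0$ small. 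Applying $\mathcal R$ to the decomposition above (subtracting means where necessary, which only costs the quantity already controlled by \eqref{e:phase_integration}), using $\|L^{j-1}(a)\otimes b\|_{m}\lesssim|k|^{-j}(\|a\|_{j-1+m}+\|a\|_{0}\|\Phi\|_{j+m})$ and $\|g\,e^{ik\cdot\Phi}\|_{\alpha}\lesssim|k|^{\alpha}\|g\|_{0}+\|g\|_{\alpha}$, and optimizing in $j$, produces the leading term $\|a\|_{0}|k|^{-1+\alpha}$ (from $j=1$) and the remainder $|k|^{-N+\alpha}(\|a\|_{N+\alpha}+\|a\|_{0}\|\Phi\|_{N+\alpha})$; the final non-divergence term is controlled directly by $\|\mathcal R(L^{N}(a)e^{ik\cdot\Phi})\|_{\alpha}\lesssim\|L^{N}(a)\|_{0}\lesssim|k|^{-N}(\|a\|_{N}+\|a\|_{0}\|\Phi\|_{N})$. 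This gives the second estimate.

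The main obstacle is entirely technical bookkeeping of the derivative budget: one must verify that throughout the iterated integration by parts the derivatives landing on $a$ together with those landing on $\Phi$ never exceed what is permitted by $\|a\|_{N+\alpha}+\|a\|_{0}\|\Phi\|_{N+\alpha}$, which hinges on controlling all derivatives of $(\nabla\Phi)^{-1}$ in terms of $\|\Phi\|_{N+1}$ and $\hat C$ (through Proposition~\ref{p:chain} and the bound $|\det\nabla\Phi|\gtrsim\hat C^{-3}$, so that the reciprocal stays smooth), and on Young's inequality to reabsorb the numerous intermediate mixed terms; in the $\mathcal R$-estimate one must in addition keep track of the single power of $|k|^{\alpha}$ that the Calderón–Zygmund step costs and check that it does not compound. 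None of this is conceptually delicate, which is why the paper simply defers to \cite{DaSz2016}.
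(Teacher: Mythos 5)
The paper does not actually prove this proposition: the statement is quoted from \cite{DaSz2016} and the text explicitly defers to Lemma~2.2 of that paper for the details. Your argument --- iterated integration by parts against the transport field $b=(\nabla\Phi)^{T}k/|(\nabla\Phi)^{T}k|^{2}$ for \eqref{e:phase_integration}, followed for the second bound by the observations that $\mathcal R\circ\div$ is a zero-order Calder\'on--Zygmund operator (bounded on $C^\alpha$ by Proposition~\ref{p:CZO_C_alpha}) while $\mathcal R$ itself is of order $-1$ and hence maps $C^0$ into $C^\alpha$ --- is precisely the standard proof of the cited lemma, so in substance you are reconstructing the reference rather than taking a different route. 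The structural pieces (the telescoping identity expressing $a\,e^{ik\cdot\Phi}$ as a sum of divergences plus $i^{N}L^{N}(a)e^{ik\cdot\Phi}$, the subtraction of means controlled by \eqref{e:phase_integration} so that the periodic operators are well defined, and Young/interpolation to absorb the intermediate terms $\|a\|_{j-1+\alpha}|k|^{-(j-\alpha)}$ into the two endpoint terms) are all correctly identified.

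One bookkeeping point is genuinely wrong as written and should be fixed. Since $b$ already contains $\nabla\Phi$, the worst term of $L^{N}(a)$ has all $N$ derivatives falling on a single copy of $b$ and therefore involves $[\Phi]_{N+1}$, not $[\Phi]_{N}$; your asserted bound $\|L^{N}(a)\|_{0}\lesssim|k|^{-N}(\|a\|_{N}+\|a\|_{0}\|\Phi\|_{N})$ cannot hold literally (already for $N=1$ and $a\equiv1$ one has $L(a)=\div b$, which requires second derivatives of $\Phi$, and these are not controlled by $\|\Phi\|_{1}$ together with the hypothesis $\hat C^{-1}\le|\nabla\Phi|\le\hat C$). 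What the argument honestly delivers is the estimate with $\|\Phi\|_{N}$ replaced by $\|\nabla\Phi\|_{N}$ (equivalently, allowing $[\Phi]_{N+1}$), and correspondingly $\|\Phi\|_{N+1+\alpha}$ in the $\mathcal R$-bound. This index shift is immaterial in every application in the paper --- $N$ is always taken large and $\|\Phi_i\|_{N+\alpha}$ is in any case only ever estimated through $\|\nabla\Phi_i\|_{N}\lesssim\ell^{-N}$ from \eqref{e:phi_N} --- but a careful write-up must either state the conclusion with the extra derivative on $\Phi$ or relabel $N$; as it stands, the claimed intermediate inequality is the one step of your proof that would fail.
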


\section{Commutators involving singular integrals}
The following lemma is a variant of Lemma 1 from \cite{Co2015}: 
\begin{proposition} 
\label{p:com:CZ:multiplication}
Let $\alpha \in(0,1)$ and $N\geq 0$. Let $T_K$ be a Calder{\'o}n-Zygmund operator with kernel $K$. Let $b \in C^{N+1,\alpha}(\T^3)$ a vectorfield. Then we have
\begin{align*}
\norm{ [T_K , b\cdot \nabla] f }_{N+\alpha} \lesssim \norm{b}_{1+\alpha} \norm{f}_{N+\alpha}+\norm{b}_{N+1+\alpha} \norm{f}_{\alpha}
\end{align*}
for any $f \in C^{N+\alpha}(\T^3)$, where the implicit constant depends on $\alpha, N$ and $K$.

\end{proposition}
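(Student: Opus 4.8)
The plan is to prove the commutator estimate for $[T_K, b\cdot\nabla]$ by a Littlewood--Paley (or equivalently, dyadic paraproduct) decomposition, following the approach of Constantin--E--Titi type commutator arguments and the cited Lemma 1 of \cite{Co2015}. First I would write $[T_K, b\cdot\nabla]f = T_K(b\cdot\nabla f) - b\cdot\nabla(T_K f)$ and observe that the key cancellation is that $b\cdot\nabla$ is a first-order operator, so the commutator with the zero-order operator $T_K$ is effectively of order zero despite each term individually being of order one. Using a Littlewood--Paley decomposition $f = \sum_j f_j$, I would decompose the commutator into diagonal and off-diagonal pieces in the frequencies of $b$ and $f$, and use the standard fact that a Calder\'on--Zygmund operator essentially commutes with Littlewood--Paley projections up to rapidly decaying tails, together with the Bernstein inequalities to convert derivatives into frequency factors.

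The core estimate to establish is the case $N=0$: $\|[T_K,b\cdot\nabla]f\|_\alpha \lesssim \|b\|_{1+\alpha}\|f\|_\alpha$. Here I would split $b = b_{<j} + b_{\geq j}$ relative to the frequency-$2^j$ piece $f_j$ of $f$. For the "high-high to low" and "low-high" interactions one gains from the regularity of $b$ (one derivative plus $\alpha$) while losing at most $\alpha$ derivatives from $f$; for the "high-low" interaction (where $b$ is at low frequency, smooth, and acts almost like a multiplier times the derivative) one uses a first-order Taylor expansion of $b$ at the relevant point so that the commutator kernel $K(x-y)(b(x)-b(y))\cdot\nabla_y$ gains the full extra decay, turning the singular kernel of order $-3$ into one of order $-3+1 = -2$, which is integrable against $C^\alpha$ data and produces the $\|b\|_{1+\alpha}\|f\|_\alpha$ bound via a direct kernel estimate (this is really the heart of the CET commutator lemma). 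Summing the geometric/rapidly-decaying series in $j$ closes the $N=0$ case.

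For general $N\geq 1$, I would apply $\partial^\theta$ with $|\theta|=N$ to the commutator and use the Leibniz rule: $\partial^\theta[T_K,b\cdot\nabla]f$ expands into a sum of terms of the form $[T_K, (\partial^{\theta_1}b)\cdot\nabla]\partial^{\theta_2}f$ with $|\theta_1|+|\theta_2| \leq N$ (using that $T_K$ commutes with constant-coefficient derivatives, so $\partial^\theta T_K = T_K \partial^\theta$), plus genuinely lower-order commutator terms. The term with $\theta_1 = 0$, $|\theta_2|=N$ is handled by the $N=0$ estimate applied to $\partial^\theta f$, giving $\|b\|_{1+\alpha}\|f\|_{N+\alpha}$. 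The terms where $b$ carries derivatives are not commutators in the delicate sense — they are just products $T_K$ of $(\partial^{\theta_1}b\cdot\nabla)(\partial^{\theta_2}f)$ minus $\partial^{\theta_1}b\cdot\nabla(T_K\partial^{\theta_2}f)$ — and for those I would bound each piece separately using boundedness of $T_K$ on $C^\alpha$ (Proposition \ref{p:CZO_C_alpha}) together with the Hölder product rule \eqref{e:Holderproduct} and interpolation \eqref{e:Holderinterpolation2}, yielding $\|b\|_{N+1+\alpha}\|f\|_\alpha$ after interpolating the intermediate norms against the two extremes. Collecting all terms gives the stated bound.

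The main obstacle I expect is the high-low interaction in the $N=0$ case: making precise the kernel estimate that $[T_K, b\cdot\nabla]$, restricted to the piece where $b$ is "smooth relative to the oscillation scale of $f$", has a kernel of improved order $-2$. This requires care with the periodic kernel $K_{\T^3}$ (the Poisson-summation correction terms must be controlled, but they are smooth and bounded so contribute only harmless lower-order terms), and with tracking that the single gained derivative goes onto $b$ rather than being wasted. One must also be slightly careful that $T_K$ is only defined on mean-zero functions, so when $\nabla f$ or products thereof fail to have mean zero one subtracts the mean first — but $\nabla(\,\cdot\,)$ and $\div(\,\cdot\,)$ always have zero mean on $\T^3$, and the commutator structure $T_K(b\cdot\nabla f) = T_K(\div(bf)) - T_K((\div b)f)$ can be massaged so that every application of $T_K$ lands on an admissible argument. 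This is a standard but technical bookkeeping point rather than a genuine difficulty.
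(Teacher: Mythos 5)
Your induction on $N$ is essentially the paper's argument: apply $\partial^\theta$ with $|\theta|=N$, use the Leibniz rule together with $\partial^\theta T_K=T_K\partial^\theta$, and close by interpolating $\|b\|_{j+1+\alpha}\|f\|_{N-j+\alpha}$ (resp.\ your $\|b\|_{j+\alpha}\|f\|_{N-j+1+\alpha}$) between the two extreme products via \eqref{e:Holderinterpolation2} and Young's inequality — the only cosmetic difference is that the paper keeps every Leibniz term in commutator form $[T_K,\partial^{\theta'}b\cdot\nabla]\partial^{\theta-\theta'}f$ and feeds each one to the $N=0$ case, whereas you discard the commutator structure when $\theta_1\neq 0$ and bound the two pieces separately; both yield exponents that balance under interpolation. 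The genuine divergence is in the base case $N=0$: the paper does \emph{not} prove it, but quotes Lemma~1 of \cite{Co2015} verbatim, its only added content being the reduction of the periodic operator to a compactly supported Calder\'on--Zygmund operator on $\R^3$ plus a smoothing remainder $T_{\rm smooth}$ via the cutoff $\chi$. You instead propose to reprove that lemma by a Littlewood--Paley/paraproduct decomposition with a CET-type kernel gain in the high--low interaction. That is a legitimate and self-contained alternative, at the price of redoing the hardest step; the paper's route is shorter but imports the estimate as a black box.

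One point in your base-case sketch deserves care: for $f\in C^\alpha$ only, the formal kernel $K(x-y)\bigl(b(x)-b(y)\bigr)\cdot\nabla_y$ of order $-2$ acts on $\nabla f$, which is not a function, so "integrable against $C^\alpha$ data'' is not literally available; after the integration by parts that puts the derivative back on the kernel, the commutator kernel is again of order $-3$ and one needs the full Calder\'on--Zygmund cancellation structure (this is exactly what \cite{Co2015} supplies). In your Littlewood--Paley framework this is repaired by Bernstein's inequality on each block $f_j$, and you do flag the related need to rewrite $b\cdot\nabla f=\div(bf)-(\div b)f$, so I regard this as an imprecision to be tightened rather than a gap.
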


\begin{proof}[Proof of Proposition~\ref{p:com:CZ:multiplication}]
The case $N=0$ is precisely Lemma 1 in \cite{Co2015}, except that in the former paper, the proof is given for Calder{\'o}n-Zygmund operators defined on $\R^3$, and for functions in $C^\alpha(\R^3) \cap L^p(\R^3)$. However, note that if $f$ is the $1$-periodic extension to all of $\R^3$ of the function $f$ on $\T^3$, and if $\chi(y)$ is a smooth cutoff function, which is identically $1$ on $[-1-1/20,1+1/20]^3$, and vanishes on the complement of  $[-1-1/10,1+1/10]^3$, we then have that 
\begin{align*}
T_K f(x) &= p.v.\int_{\R^3} K(x-y) \chi(y) f(y) dy + T_{\rm smooth} f(x)
\end{align*}
where 
\begin{align*}
T_{\rm smooth} \colon C^0(\T^3) \to C^{N}(\T^3)
\end{align*}
is a bounded operator, for any $N \geq 0$. Thus, modulo using the smoothing property of $T_{\rm smooth}$, we may apply directly the proof in \cite{Co2015} to the periodic case of this paper.

Let us now consider the case $N\geq 1$, and to this end let $\theta$ be a multi-index with $|\theta|=N$. Then, by the Leibniz rule
\begin{align*}
\partial^\theta[T_K,b\cdot\nabla]f&=T_K(\partial^\theta(b\cdot \nabla f))-\partial^\theta(b\cdot\nabla T_Kf)\\
&=\sum_{\theta'}\binom{\theta}{\theta'}\Bigl\{T_K(\partial^{\theta'} b\cdot\nabla \partial^{\theta-\theta'}f)-\partial^{\theta'} b\cdot\nabla \partial^{\theta-\theta'}T_Kf\Bigr\}\\
&=\sum_{\theta'}\binom{\theta}{\theta'}\Bigl\{[T_K,\partial^{\theta'} b\cdot\nabla]\partial^{\theta-\theta'}f\Bigr\}\,.	
\end{align*}
Therefore we obtain from the case $N=0$:
$$
\norm{\partial^\theta[T_K,b\cdot\nabla]f]f}_\alpha\lesssim \sum_{j=0}^N\|b\|_{j+1+\alpha}\|f\|_{N-j+\alpha}.
$$
Furthermore, by interpolation
\begin{align*}
\|b\|_{j+1+\alpha}\lesssim \|b\|_{1+\alpha}^{1-\sfrac{j}{N}}\|b\|_{N+1+\alpha}^{\sfrac{j}{N}}, \quad \mbox{and} \quad 
\|f\|_{N-j+\alpha}\lesssim \|f\|_{N+\alpha}^{1-\sfrac{j}{N}}\|f\|_{\alpha}^{\sfrac{j}{N}}\, ,
\end{align*}
so that, for any $j=0,\dots,N$
$$
\|b\|_{j+1+\alpha}\|f\|_{N-j+\alpha}\lesssim \norm{b}_{1+\alpha} \norm{f}_{N+\alpha}+\norm{b}_{N+1+\alpha} \norm{f}_{\alpha}\,.
$$
This concludes the proof.
\end{proof}



\begin{thebibliography}{BDLISJ15}

\bibitem[BDLISJ15]{BuDLeIsSz2015}
T.~Buckmaster, C.~De~Lellis, P.~Isett, and L.~Sz{\'e}kelyhidi~Jr.
\newblock Anomalous dissipation for {1/5}-holder {E}uler flows.
\newblock {\em Annals of Mathematics}, 182(1):127--172, 2015.

\bibitem[BDLS11]{BrenierDLSz2011}
Y.~Brenier, C.~De~Lellis, and L.~Sz\'ekelyhidi, Jr.
\newblock Weak-strong uniqueness for measure-valued solutions.
\newblock {\em Comm. Math. Phys.}, 305(2):351--361, 2011.

\bibitem[BDLS13]{BuDLeSz2013}
T.~Buckmaster, C.~De~Lellis, and L.~Sz{{\'e}}kelyhidi, Jr.
\newblock Transporting microstructure and dissipative {E}uler flows.
\newblock {\em arXiv:1302.2815}, 02 2013.

\bibitem[BDLS16]{BuDLeSz2016}
T.~Buckmaster, C.~De~Lellis, and L.~Sz{{\'e}}kelyhidi, Jr.
\newblock Dissipative {E}uler flows with {O}nsager-critical spatial regularity.
\newblock {\em Comm. Pure Appl. Math.}, 69(9):1613--1670, 2016.

\bibitem[Buc15]{Bu2015}
T.~Buckmaster.
\newblock Onsager's conjecture almost everywhere in time.
\newblock {\em Communications in Mathematical Physics}, 333(3):1175--1198,
  2015.

\bibitem[CCFS08]{CCFS2008}
A.~Cheskidov, P.~Constantin, S.~Friedlander, and R.~Shvydkoy.
\newblock Energy conservation and {O}nsager's conjecture for the {E}uler
  equations.
\newblock {\em Nonlinearity}, 21(6):1233, 2008.

\bibitem[CDLSJ12]{CoDLSz2012}
S.~Conti, C.~De~Lellis, and L.~Sz{{\'e}}kelyhidi~Jr.
\newblock $h$-principle and rigidity for {$C^{1,\alpha}$} isometric embeddings.
\newblock In {\em Nonlinear partial differential equations}, pages 83--116.
  Springer, 2012.

\bibitem[CET94]{CoETi1994}
P.~Constantin, W.~E, and E.S. Titi.
\newblock Onsager's conjecture on the energy conservation for solutions of
  {E}uler's equation.
\newblock {\em Comm. Math. Phys.}, 165(1):207--209, 1994.

\bibitem[Con15]{Co2015}
P.~Constantin.
\newblock {L}agrangian--{E}ulerian methods for uniqueness in hydrodynamic
  systems.
\newblock {\em Adv. Math.}, 278:67--102, 2015.

\bibitem[CZ54]{CaZy1954}
A.P. Calder{{\'o}}n and A.~Zygmund.
\newblock Singular integrals and periodic functions.
\newblock {\em Studia Math.}, 14:249--271, 1954.

\bibitem[DLS09]{DlSzAnnals}
C.~De~Lellis and L.~Sz\'ekelyhidi, Jr.
\newblock The {E}uler equations as a differential inclusion.
\newblock {\em Ann. of Math. (2)}, 170(3):1417--1436, 2009.

\bibitem[DLS12]{DlSz2012}
C.~De~Lellis and L.~Sz{{\'e}}kelyhidi, Jr.
\newblock The {$h$}-principle and the equations of fluid dynamics.
\newblock {\em Bull. Amer. Math. Soc. (N.S.)}, 49(3):347--375, 2012.

\bibitem[DLS13]{DlSz2013}
C.~De~Lellis and L.~Sz{{\'e}}kelyhidi, Jr.
\newblock Dissipative continuous {E}uler flows.
\newblock {\em Invent. Math.}, 193(2):377--407, 2013.

\bibitem[DLS14]{DlSzJEMS}
C.~De~Lellis and L.~Sz\'ekelyhidi, Jr.
\newblock Dissipative {E}uler flows and {O}nsager's conjecture.
\newblock {\em J. Eur. Math. Soc. (JEMS)}, 16(7):1467--1505, 2014.

\bibitem[DSJ16]{DaSz2016}
S.~Daneri and L.~Sz{\'e}kelyhidi~Jr.
\newblock Non-uniqueness and $h$-principle for {H}\"older-continuous weak
  solutions of the {E}uler equations.
\newblock {\em arXiv:1603.09714}, 2016.

\bibitem[Eyi94]{Eyink}
G.~L. Eyink.
\newblock Energy dissipation without viscosity in ideal hydrodynamics. {I}.
  {F}ourier analysis and local energy transfer.
\newblock {\em Phys. D}, 78(3-4):222--240, 1994.

\bibitem[Ise13a]{Is2013}
P.~Isett.
\newblock {\em Holder continuous {E}uler flows with compact support in time}.
\newblock ProQuest LLC, Ann Arbor, MI, 2013.
\newblock Thesis (Ph.D.)--Princeton University.

\bibitem[{Ise}13b]{Isett-time}
P.~{Isett}.
\newblock {Regularity in time along the coarse scale flow for the
  incompressible {E}uler equations}.
\newblock {\em ArXiv e-prints}, July 2013.

\bibitem[Ise16]{Is2016}
P.~Isett.
\newblock A proof of {O}nsager's conjecture.
\newblock {\em arXiv:1608.08301}, 2016.

\bibitem[Kui55]{Kuiper}
N.~H. Kuiper.
\newblock On {$C^1$}-isometric imbeddings. {I}, {II}.
\newblock {\em Nederl. Akad. Wetensch. Proc. Ser. A. {\bf 58} = Indag. Math.},
  17:545--556, 683--689, 1955.

\bibitem[Lio96]{LionsBook}
P.-L. Lions.
\newblock {\em {Mathematical Topics in Fluid Mechanics: Volume 1:
  Incompressible Models}}.
\newblock Oxford University Press, June 1996.

\bibitem[MB02]{MaBe2002}
A.J. Majda and A.L. Bertozzi.
\newblock {\em Vorticity and incompressible flow}, volume~27 of {\em Cambridge
  Texts in Applied Mathematics}.
\newblock Cambridge University Press, Cambridge, 2002.

\bibitem[Nas54]{Nash}
J.~Nash.
\newblock {$C^1$} isometric imbeddings.
\newblock {\em Ann. of Math. (2)}, 60:383--396, 1954.

\bibitem[Ons49]{On1949}
L.~Onsager.
\newblock {S}tatistical hydrodynamics.
\newblock {\em Il {N}uovo {C}imento (1943-1954)}, 6:279--287, 1949.

\bibitem[Sch93]{Scheffer}
V.~Scheffer.
\newblock An inviscid flow with compact support in space-time.
\newblock {\em J. Geom. Anal.}, 3(4):343--401, 1993.

\bibitem[Shn97]{Shnirelman}
A.~Shnirelman.
\newblock On the nonuniqueness of weak solution of the {E}uler equation.
\newblock {\em Comm. Pure Appl. Math.}, 50(12):1261--1286, 1997.

\end{thebibliography}

\end{document}